\newcommand{\commentout}[1]{}
\newcommand{\R}{\mathbb{R}}
\newcommand{\N}{\mathbb{N}}
\newcommand {\eps}  {\varepsilon}
\newcommand {\Chi} {{\bf \raise 2pt \hbox{$\chi$}} }
\newcommand {\sgn} { {\mathrm {sgn}} }
\newcommand{\sgne}{{\mathrm{sgn}^{(\eps)}}}
\newcommand{\ds}{\displaystyle}
\newcommand{\ud}{\, \mathrm{d}}
\newcommand{\beq}{\begin{equation}}
\newcommand{\beqa}{\begin{eqnarray}}
\newcommand{\bea} {\begin{array}{ll}}
\newcommand{\beqan}{\begin{eqnarray*}}
\newcommand{\eeq}{\end{equation}}
\newcommand{\eeqa}{\end{eqnarray}}
\newcommand{\eeqan}{\end{eqnarray*}}
\newcommand{\eea} {\end{array}}
\newtheorem{theorem}{Theorem}[section]
\newtheorem{lemma}[theorem]{Lemma}
\newtheorem{remark}[theorem]{Remark}
\newtheorem{prop}[theorem]{Proposition}
\newtheorem{coro}[theorem]{Corollary}
\newcommand{\qed}{{ \hfill
                       {\unskip\kern 6pt\penalty 500
                       \raise -2pt\hbox{\vrule\vbox to 6pt{\hrule width 6pt
                       \vfill\hrule}\vrule} \par}  \medskip }}
\title{\Large \bf
 Two-dimensional pseudo-gravity model}
 \author[1]{Julien Barr\'e\thanks{ {\tt julien.barre@univ-orleans.fr}}}
  \author[2]{Dan Crisan\thanks{ {\tt d.crisan@imperial.ac.uk}}}
\author[3]{Thierry Goudon\thanks{ {\tt thierry.goudon@inria.fr}}}
\affil[1]{Laboratoire MAPMO, UMR CNRS 7349, Universit\'e d'Orl\'eans\\ 

\& 
 Institut Universitaire de France}
\affil[3]{Universit\'e C\^ote d'Azur, Inria,  CNRS, LJAD}
\affil[2]{Imperial College London, Dept. of Mathematics\\

   Huxley Building,
   180 Queens Gate,
   London SW7 2BZ, UK
}
\date{}
\begin{document}

\maketitle

\abstract{We analyze  a simple macroscopic model describing the evolution of a cloud of particles confined in a magneto-optical trap.
The behavior of the particles is mainly driven by self--consistent attractive forces. In contrast to the standard model of gravitational forces, 
the force field does not result from a potential;
moreover, the non linear coupling is more singular than the coupling based on the Poisson equation.
We establish the existence of solutions, under a suitable smallness condition on the total mass, or, equivalently, for a sufficiently large diffusion coefficient.
When a symmetry assumption is fulfilled, the solutions satisfy strengthened   
estimates (exponential moments).
We also investigate the convergence of the $N$-particles description towards the PDE system in the mean field regime.}

\medskip
\noindent
  \small{\bf{Key words. }}\small{Attractive forces.
  Convection--diffusion equations. Mean field regime. %Obstacle problem. %Plasma physics.
  }

   \medskip
   \noindent
 \small{\bf{2010 MSC Subject Classification.} }\small{
 82C70, %Stat. mech. Transport processes 
70F45, % Infinite particle systems
   35Q35.%PDEs in connection with fluid mechanics
  }

%%%%%%%%%%%%%%%%%%%%%%%%%%%%%%%%%%%%%INTRODUCTION%%%%%%%%%%%%%%%%%%%%%%%%%%%%%%%%%%%%%%%%%%%%%%%%%%

\section{Introduction}

This  work is concerned with a simple mathematical model describing anisotropic magneto-optical traps (MOT).
In these devices, clouds of atoms are held together at very low temperatures through the action of well tuned lasers.
These lasers induce on each atom an external space dependent confining force, as well as a friction: these effects are responsible 
for the trapping and cooling of the atoms. The lasers also create effective interaction forces between the atoms. The precise description 
of these forces involves a full description of the laser field and its coupling with the atoms. The following simplification, while probably 
not always quantitatively accurate, is customary since the pioneering article \cite{Sesko90}: the interaction forces are divided into
\begin{itemize}
\item[i)] 
a repulsive force due to multiple diffusion of photons, 
which is usually approximated by a Coulomb force (predicted in \cite{Sesko90}) and
\item[ ii)] an attractive long-range force, the so--called "shadow effect"  (predicted in \cite{Dalibard88}), that bears 
some similarity with gravity, %but which is different 
and is the main subject of this article. 
\end{itemize}
In a standard, roughly spherical, cloud, the repulsive force dominates. Nevertheless, if an external potential forces the cloud into a very elongated cigar shape, or a very thin pancake shape, the attractive force is expected to dominate, and the repulsive force may be neglected in a first approximation \cite{Barre14,Chalony13}. This is the regime we are interested in.

A  typical MOT involves $10^6$ to $10^{10}$ interacting particles. Although in experiment trapping the atoms in a pancake-shaped cloud would probably contain less atoms, 
it is then relevant to make use of a partial differential equations describing the particles' density, instead 
of considering the dynamics of the individual particles. A reasonable model may be a 3D non--linear Fokker-Planck, or a McKean-Vlasov, equation. 
However, in order to  describe the cigar- or pancake-shaped clouds observed in the experiments 
it makes sense to use a large scale approach, and to integrate over the 
 small dimension(s). After some approximations,  one is left with an effective 1D or 2D nonlinear partial differential equations (PDE). 
 %These PDE may be simulated using particle' methods 
 %(see below)
% {\color{red} This is not clear to me that we are using a particle method in the sense said here and below: as indicated in the next sentence the particles of a particle method usually differ form the %actual particles~! while we precisely wish to investigate the limit behavior of the particle system...}, 
%but we warn the reader that the numerical particles might  not be directly related to the atoms. 
The 1D equation obtained this way coincides with the mean-field description a 1D damped self-gravitating system \cite{Chalony13} and is well-known. We thus concentrate on the 2D case.
The 2D nonlinear PDE studied here has its own interest, independently of
the relation with the MOT experiments: it bears some similarities with a 2D damped self-gravitating system (also known as the Smoluchowski model in astrophysics \cite{Chandra} or 
the Keller-Segel chemotactic model \cite{KellerSegel70, KellerSegel71}). 
 Therefore, a natural question is to determine whether or not  singularities appear in finite time, depending on certain thresholds, 
as this is the case for  the Keller--Segel model, see the review \cite{Horst1, Horst2}.

We are interested in the particle density $(x,y,t)\mapsto \rho(x,y,t)$, which is a scalar non--negative quantity that depends on the time $t\geq 0$ and space variables $(x,y)\in \R^2$.
Its evolution  is governed by the following non linear PDE 
\begin{equation}\label{pde}
\partial_t \rho =\nabla\cdot\left(D \nabla \rho -\vec{F}[\rho] \rho \right),
\end{equation}
where the constant  $D>0$ is given and  the self consistent force field \[
\vec{F}[\rho]=\begin{pmatrix}F_x[\rho]\\ F_y[\rho]\end{pmatrix},\]
 is defined by 
\begin{equation}\label{def_f}\begin{array}{l}
F_x[\rho](x,y,t) = -\ds\int \sgn(x-x') \rho(x',y,t)\ud x', \\
F_y[\rho](x,y,t) = -\ds\int \sgn(y-y') \rho(x,y',t)\ud y'.
\end{array}
\end{equation}
The problem is complemented with an initial data 
\begin{equation}\label{ci}
\rho\Big|_{t=0}=\rho_0.\end{equation}
Similar to the Keller--Segel model, the force is thus defined through a convolution formula. As a consequence of 
the fact that the (distributional) derivative of the function $x\mapsto \sgn(x)$ is $2\delta_0$, where $\delta_0$ is the Dirac delta distribution at $0$,
 %$\frac{\ud}{\ud x}\sgn(x)=\delta(x=0)$, 
 we observe that (mind the sign)
\begin{equation}\label{div_f}
\nabla\cdot \vec{F}[\rho]=-4\rho\leq 0.\end{equation}
The divergence of the force field of the Keller-Segel system satisfies the same relation.
However, there are crucial differences with the Keller--Segel system that make the analysis here different:
\begin{itemize}
\item the force does not have the potential structure ($\vec{F}$ cannot be expressed as the gradient of a potential), and, accordingly, 
we cannot derive estimates related to the evolution of a potential energy,
\item the convolution acts only on a single direction variable; hence we cannot expect any regularisation effect similar to the one given 
by the coupling of the force through the Poisson equation,
\item we cannot use symmetry properties for expressing the 
force term 
$\iint \vec {F}[\rho]\rho\cdot\nabla\varphi \ud y\ud x$, for $\varphi\in C^\infty_c(\mathbb R^2)$,  in a convenient weak sense, which is a crucial ingredient
 when dealing with the Keller--Segel equation, see e.~g.~\cite{Po, PS, Sc}.  
\end{itemize}
We wish to investigate the  existence, uniqueness of a solution of \eqref{pde}--\eqref{ci} and to devise and analyze a particle method 
which can be used to perform simulation of the  PDE.
To be more specific, our strategy is as follows:
\begin{enumerate}
\item Introduce a regularized PDE
\begin{equation}\label{reg_pde}
\partial_t \rho^{(\eps)} =\nabla\cdot\left(\nabla \rho^{(\eps)} -\vec{F}^{(\eps)} [\rho^{(\eps)} ] \rho^{(\eps)}  \right)
\end{equation}
where the kernel  $\left(\sgn(x)\delta(y),\delta(x)\sgn(y)\right)$ in \eqref{def_f} is smoothed out. 
We take
\begin{equation}\label{approxF}
\begin{array}{lll}
F^{(\varepsilon)}_x[\rho](x,y,t) &=& -\ds\iint \sgn^{(\eps)}(x-x') \delta^{(\eps)}(y-y')\rho(x',y',t)\ud x'\ud y' , \\
F^{(\varepsilon)}_y[\rho](x,y,t) &=& -\ds\iint \sgn^{(\eps)}(y-y') \delta^{(\eps)}(x-x')\rho(x',y',t)\ud x'~\ud y' 
\end{array}\end{equation}
with
\begin{eqnarray}
\sgn^{(\eps)}(u) &=& 2 \frac{1}{\varepsilon\sqrt{2\pi}} \int_0^u e^{-\frac{v^2}{2\varepsilon^2}}\ud v, \nonumber \\
\delta^{(\eps)}(u) &=& \frac12 \frac{\ud}{\ud u} \sgn^{(\eps)}(u) =  \frac{1}{\varepsilon\sqrt{2\pi}}e^{-\frac{u^2}{2\varepsilon^2}}.\nonumber
\end{eqnarray}
Denoting by $\star_x$ (resp. $\star_y$)  the convolution with respect to the variable  $x$ (resp. the  variable $y$), we observe  that
\begin{eqnarray}
F^{(\varepsilon)}_x[\rho] &=& T^{(\eps)}( \sgn \star_x \rho) =\sgn \star_x (T^{(\eps)} \rho), \nonumber \\
F^{(\varepsilon)}_y[\rho] &=& T^{(\eps)}( \sgn \star_y \rho) = \sgn \star_y(T^{(\eps)} \rho),   \nonumber
\end{eqnarray}
where $T^{(\eps)}$ stands for the convolution with the normalized 2-d Gaussian kernel.\\
\item Establish a priori estimates that are uniform with respect to  $\varepsilon$. We 
obtain several such estimates, typically $L^p$ and moment estimates, based on dissipative properties of the equation, at the price of assuming the diffusion coefficient $D$ large enough. Section~\ref{sec:estimates} includes these estimates.
\item Show the existence and uniqueness of solutions $\rho^{(\eps)}$ of the regularized PDE \eqref{reg_pde}. To this end, we employ a suitable  fixed point approach, described in  Section~\ref{sec:existence}
\item Use the a priori estimates to prove global existence of the solution of the original equation, at least when $D$ is large enough. 
We present two proofs. The first relies on 
quite standard compactness arguments. As mentioned above the difficulty 
is related to the non--linear term $\vec{F}[\rho]\rho$ and the adopted functional framework 
should be constructed  so that the product makes sense and is stable.
The second approach is more precise and establishes directly that the sequence of approximated solutions $\big(\rho^{(\eps)}\big)_{\eps>0}$ 
 satisfies the Cauchy criterion in a certain norm. However this approach 
 requires certain symmetry assumptions and fast enough decay of the initial state. These additional assumptions  allow us to  derive 
exponential moments, and weighted estimates on the gradient of the unknown.
 This analysis is detailed in  Section~\ref{sec:convergence}. 
\item Introduce a stochastic  system of $N\gg 1$ particles, with a regularized interaction, and prove that the empirical measure  converges towards a solution of the  PDE when $N\to \infty$. Assuming that the  number of particles
$N $ is proportional to $e^{C/\eps^2}$ 
%\emph{there are additional polynomial terms in $\eps$; to be checked} No there aren't. All is well  
with $\eps$ being the regularizing parameter, one can obtain particle approximations that are arbitrarily close to $\rho$, on any fixed time interval. In particular we show that one can get an upper bound for the Wasserstein distance between the particle approximation and $\rho$ of order $\eps^{-\nu}$, where $\nu>0$ is a certain constant independent of $\eps$. Put it differently, we show that if $\eps$ is of order $(\log(N))^{-{1\over 2}}$, then the rate of convergence of the Wasserstein distance between the particle approximation and $\rho$ is also of logarithmic order, see Theorem~\ref{ratep}. The analysis is  presented in  Section~\ref{sec:particles}. 
%Dan: I commented this out as I couldn't find a reasonable way to express the fact that the connection between the particles is there but is idealized, more precise a better way that what is explained in the introduction \textcolor{blue}{We warn the reader that the numerical particles are not %be directly related to the actual atoms in the trap. {\it I put this comment %here...}} 

\item Run numerical simulations using the particle representation obtained in Section 5 and compare it with the PDE method introduced in \cite{CCH}. In this way we illustrate the existence results covered by Theorems \ref{main} and \ref{main2}. As we will see, the constraint on the  diffusion coefficient (condition \eqref{DcritL2}) required for the two theorems to be valid is not optimal: the solution can apparently be global in time for other values too. We also illustrate the convergence for the particles approximation. The rate of convergence of the particle approximation as a function of $N$ seems to be much better than that suggested by Theorem \ref{ratep}. These are covered in Section~\ref{Sec:Num}.
\end{enumerate}

\section{A priori estimates}
\label{sec:estimates}

\subsection{Moments}
Let $k\in\N$, $k\geq 2$. We set 
\[
m_k(t)=\iint (|x|^k+|y|^k) \rho(x,y,t)\ud x\ud y
\]
Using integration by parts yields
\begin{eqnarray}
\frac{\ud m_k}{\ud t}(t) &=& Dk(k-1) m_{k-2} (t)
\nonumber
\\
&&
+k\iint \left(\sgn(x)|x|^{k-1} F_x [\rho]+\sgn(y)|y|^{k-1} F_y[\rho]\right) \rho(x,y,t)\ud x\ud y 
\nonumber\\
&=& Dk(k-1) m_{k-2}(t)
\nonumber\\
&& -k \iiint \sgn(x)|x|^{k-1} \sgn(x-x')\rho(x',y)\rho(x,y,t)\ud x\ud x'\ud y
\nonumber\\
&& - k \iiint \sgn(y)|y|^{k-1} \sgn(y-y')\rho(x',y)\rho(x,y,t)\ud x\ud x'\ud y.
\nonumber
\end{eqnarray}
By exchanging the r\^ole of $x$ and $x'$, we find
\[\begin{array}{l}
\ds \iiint \sgn(x)|x|^{k-1} \sgn(x-x')\rho(x',y)\rho(x,y,t)\ud x\ud x'\ud y
\\
\qquad=
\ds\frac{1}{2} \iint [\sgn(x)|x|^{k-1}-\sgn(x')|x'|^{k-1}] \sgn(x-x')\rho(x',y,t)\rho(x,y,t)\ud x\ud x'\ud y\geq 0
\end{array}\]
since $x\mapsto \sgn(x)|x|^{k-1}$ is non--decreasing.
A similar remark applies for the integral coming from $F_y$.
Therefore, the moments satisfy the following relation
\begin{equation}\label{mtk}
\frac{\ud m_k}{\ud t} \leq Dk(k-1) m_{k-2}.
\end{equation}
In particular, since the total mass is conserved
\[
\ds\frac{\ud}{\ud t} \ds\iint \rho(x,y,t)\ud y\ud x=0,\]
we obtain 
\[
m_2(t)\leq m_2(0) +2DM_0 t,
\]
with
\[M_0= \ds\iint \rho_0(x,y)\ud y\ud x.\]

\subsection{Entropies}
Let $h:[0,\infty)\to \R$ be a convex function and  write
\[
H[\rho] = \iint h ( \rho)\ud y\ud x.
\]
We have
\[
\begin{array}{lll}
\ds\frac{\ud H[\rho]}{\ud t} &=&\ds \iint h'( \rho) \nabla\cdot\left(D \nabla \rho -\vec{F}[\rho] \rho \right) \ud y\ud x\\
&=& -D\ds\iint |\nabla \rho|^2 h''(\rho)\ud y\ud x +\iint \nabla \rho \cdot \vec{F}[\rho]\  \rho h''(\rho)\ud y\ud x.
\end{array}\]
Let  $q$ be an anti-derivative of 
\[q'(\rho)=\rho h''(\rho).\]
We thus arrive at 
\begin{equation}
\begin{array}{lll}
\ds\frac{\ud H[\rho]}{\ud t} &=& -D\ds\iint |\nabla \rho|^2 h''(\rho)\ud y\ud x 
- \iint  q(\rho)\nabla  \cdot \vec{F}[\rho] \ud y\ud x\\
\\
&=& -D\ds\iint |\nabla \rho|^2 h''(\rho)\ud y\ud x 
+4 \iint \rho q(\rho)\ud y\ud x
\label{eq:dH}
\end{array}
\end{equation}
by virtue of \eqref{div_f}.
In order to compensate the non--linearity 
in the last integral by the dissipated term, we can make use of the following 
 Gagliardo--Nirenberg--Sobolev inequality
 (see e.~g.~\cite[p.~125]{LN} or \cite[Th. IX.9 with eq. (17) \& eq. (85) p.~195]{Brez}), 
 which holds in $\mathbb R^2$  for any $p\geq 1$:
 \begin{equation}\label{GNS}
\iint \xi^{p+1}\ud y\ud x\leq C_p\iint \xi\ud y \ud x\times \iint |\nabla(\xi^{p/2})|^2\ud y \ud x.\end{equation}
Let us detail how the estimates work in different cases:
\begin{itemize}
\item Entropy $h(z)=z\ln(z)$.

\noindent We get $zq(z)=z^2$ and we use \eqref{GNS} with $p=1$.
Remarking that $\frac{|\nabla\rho|^2}{\rho}=4|\nabla\sqrt\rho|^2$ and  taking into account the mass conservation, we are led to 
\begin{equation}\label{entropy}
\ds\frac{\ud }{\ud t}\ds\iint \rho\ln (\rho)\ud y \ud x
+ 4(D -C_1M_0)\ds\iint |\nabla\sqrt\rho|^2\ud y\ud x \leq 0.
\end{equation}
It indicates a dissipation property when
the diffusion coefficient is large enough
\[D\geq C_1M_0.
\]
Based on this, we can  conjecture that solutions exist globally for large diffusion constants $D$.
\item $L^p$ estimates: $h(z)=z^p$.

\noindent
We get $zq(z)=(p-1)z^{p+1}$, and we use \eqref{GNS} with $p>1$.
Remarking that $h''(\rho)|\nabla\rho|^2=\frac4p (p-1)|\nabla\rho^{p/2}|^2$, we are led to 
\begin{equation}\label{Lpdiss}
\ds\frac{\ud }{\ud t}\ds\iint \rho^p \ud y \ud x
+ 4(p-1)\left(\ds\frac{D}{p} -C_pM_0\right)\ds\iint |\nabla\rho^{p/2}|^2\ud y\ud x \leq 0.
\end{equation}
Eq.~\eqref{Lpdiss} shows that the $L^p$ norm of the solution is a non-increasing function of time when  $D$ is large enough, but 
how large depends on $p$ with this approach. We are going to obtain finer estimates for large values of the constant $p$.
\end{itemize}

In order to eliminate the too restrictive condition on $D$, we use a different approach for the $L^p$ estimate. To this end, we use the 
Cauchy--Schwarz inequality and \eqref{GNS} and we obtain
%\begin{equation}
%\frac{d}{\ud t} \int \rho^p + a\frac{p-1}{p} \int (\nabla \rho^{p/2})^2 \leq C_1p(p-1)\int \rho^p
%\label{eq:step1}
%\end{equation}
%First, we have
\[\begin{array}{lll}\ds
\iint \rho^{p+1} \ud y \ud x
&\leq&\ds \left(\iint \rho^2 \ud y \ud x\right)^{1/2} \left(\iint \rho^{2p} \ud y \ud x\right)^{1/2}
\\
& \leq &\ds
\left(\iint \rho^2 \ud y \ud x\right)^{1/2} \left( C_1\ds\iint \rho^p\ud y\ud x
\ds\iint |\nabla\sqrt{\rho^p}|\ud y\ud x\right)^{1/2}.
\end{array}\]
Going back to \eqref{eq:dH}, still with $h(z)=z^p$, 
the elementary inequality  $AB\leq \alpha \frac{A^2}{2}+\frac{B^2}{2\alpha}$
with an appropriate choice of $\alpha>0$ leads us to 
\[
\ds\frac{\ud}{\ud t} \iint \rho^p \ud y \ud x+2D\ds\frac{p-1}{p}\iint |\nabla \rho^{p/2}|^2
\ud y\ud x
\leq \ds\frac{ C_1^2p}{8D(p-1)}\times 16(p-1)^2\iint \rho^2\ud y\ud x\iint \rho^p\ud y\ud x
.\]  
From now on, we assume that 
\begin{equation}\label{DcritL2}
D>2C_2M_0.\end{equation}
Accordingly, the $L^2$ norm is dissipated and 
$$\ds\iint \rho^2(t)\ud y\ud x\leq \ds\iint \rho^2_0\ud y\ud x$$
holds.
Therefore, we arrive at 
\begin{equation}\label{estLp00}
\ds\frac{\ud}{\ud t} \iint \rho^p \ud y \ud x+2D\ds\frac{p-1}{p}\iint |\nabla \rho^{p/2}|^2
\ud y\ud x
\leq K_1  \ p^2\iint \rho^p\ud y\ud x
\end{equation}
with $K_1=\frac{2C_1^2\|\rho_0\|^2_{L^2}}{D}$.

We use this relation to derive a $L^\infty$ estimate, through an iterative argument on the exponent $p$ which dates back to  
 \cite{Alikakos}.
 Let us set 
 \[p_k=2^k,\qquad v_k=\rho^{p_k}.\]
 Let $\omega>0$. Eq. \eqref{estLp00} tells us that 
\[ e^{-\omega t}\ds\frac{\ud}{\ud t} \left(e^{\omega t} \iint v_k \ud y \ud x\right)
  +2D\ds\frac{p_k-1}{p_k}\iint |\nabla v_{k-1}|^2
\ud y\ud x
\leq 
  (K_1p_k^2 +\omega)  \iint v_k\ud y\ud x
.\]
We are going to estimate the right hand side by using the following 
Gagliardo--Nirenberg--Sobolev inequality
 (see e.~g.~\cite[p.~125]{LN} or \cite[eq. (85) p.~195]{Brez})
 \begin{equation}\label{GNS2}
\iint \xi^{2}\ud y\ud x\leq \bar C_2\iint \xi\ud y\ud x\left(\iint |\nabla\xi|^2\ud y\ud x\right)^{1/2}.\end{equation}
We combine this information with the Young inequality 
as follows 
\[
\iint \xi^{2}\ud y\ud x\leq \ds\frac{\bar C_2\omega}{2}\iint |\nabla\xi|^2\ud y\ud x
+  \ds\frac{\bar C_2}{2\omega} \left(\iint \xi\ud y\ud x
\right)^{2}.\]
We choose $\omega=\omega_k>0$ small enough to ensure 
\[
\bar C_2(K_1 p_k^2 +\omega_k)\ds\frac{\omega_k}{ 2}\leq D\ds\frac{p_k-1}{p_k}.
\]
Since $v_k=v_{k-1}^2$, we are thus led to 
\[ e^{-\omega_k t}\ds\frac{\ud}{\ud t} \left(e^{\omega_k t} \iint v_k \ud y \ud x\right)
  +D\ds\frac{p_k-1}{p_k}\iint |\nabla v_{k-1}|^2
\ud y\ud x
\leq 
  \ds\frac{\bar C_2(K_1p_k^2 +\omega_k) }{2\omega_k}\left(  \iint v_{k-1}\ud y\ud x
\right)^2.\]
Integrating from this relation, we obtain
\[
\begin{array}{lll}
\ds\iint v_k(t) \ud y \ud x
&\leq& e^{-\omega_k t}\left(\ds \iint v_k(0) \ud y \ud x \right.
\\ &&\qquad\qquad\qquad \left. +
\ds\int_0^t e^{-\omega_k s}\ds\frac{\bar C_2(K_1p_k^2 +\omega_k) }{2\omega_k}\left(  \iint v_{k-1}(s)\ud y\ud x\right)^2\ud s\right)
\\
&\leq & e^{-\omega_k t}\ds\iint v_k(0) \ud y \ud x
\\ &&  +e^{-\omega_k t}\ds\frac{e^{\omega_k t}-1}{\omega_k} \ \ds\frac{\bar C_2(K_1p_k^2 +\omega_k) }{2\omega_k}
\ds\sup_{0\leq s\leq t} \left(  \iint v_{k-1}(s)\ud y\ud x\right)^2.
\end{array}\]
In the right hand side, we make a convex combination appear, and we infer that
\[
 \iint v_k(t) \ud y \ud x
\leq \max\left\{\iint v_k(0) \ud y \ud x, \ds\frac{\bar C_2(K_1p_k^2 +\omega_k) }{2\omega_k^2}\left(\ds\sup_{0\leq s\leq t}  \iint v_{k-1}(s)\ud y\ud x
\right)^2\right\}.
\]
Let us set 
\[
L=\max\big(\|\rho_0\|_{L^1},\|\rho_0\|_{L^\infty}\big)
,
\qquad
\delta_k=\ds\frac{\bar C_2(K_1p_k^2 +\omega_k) }{2\omega_k^2}
.\]
Note that $\omega_k$ behaves like $\frac{1}{p_k^2}$, and thus we can dominate $\delta_k\leq Mp_k^6$ for some $M>0$, so that, finally, we can find $A>0$ such that $
\delta_k\leq A^k$.
A direct recursion shows that
$$
 \iint v_k(t) \ud y \ud x\leq 
 \delta_k \delta_{k-1}^{p_1}\ldots\delta_{1}^{p_{k-1}}L^{p_k}
$$
which implies 
\[
\|\rho(t)\|_{L^{p_k}} \leq L \left( A^{r_k}\right)^{1/p_k},\qquad 
r_k=\ds\sum_{\ell=0}^{k} (k-\ell)p_\ell.
\]
Since 
\[
\ds\frac{r_k}{p_k}=\ds\frac12\ds\sum_{j=1}^k j\left(\ds\frac12\right)^{j-1}=\ds\frac12 \ds\frac{\ud}{\ud s}\left(\ds\frac{1-s^{k+1}}{1-s}\right)\Big|_{s=1/2}=
2\Big(
1+(k+2)e^{-(k+1)\ln(2)}
\Big)
\]
admits a finite limit as $k\to \infty$, we deduce that the sequence $\big(\|\rho(t)\|_{L^{p_k}}\big)_{k\in\N}$ is bounded . The $L^\infty$ bound follows by letting $k$ go to $\infty$, and the bound depends on the initial $L^1$ and 
$L^\infty$ norms.
The minimal $D$ needed for this bound to be valid is unknown.
We can recap our findings as follows.

\begin{prop}\label{apriori}
Let $\rho$ be a sufficiently smooth 
solution of  \eqref{pde}--\eqref{ci}.
Then, $\rho$ satisfies the following properties:
\begin{itemize}
\item[i)] mass is conserved $\iint \rho(t)\ud y\ud x=\iint \rho_0\ud y\ud x=M_0$,
\item[ii)] if $\rho_0\in L^p(\R^2)$ and $D>pC_pM_0$,\footnote{The constant $C_p$ is the constant appearing in the Gagliardo--Nirenberg--Sobolev inequality \eqref{GNS}. } then, $\|\rho(t)\|_{L^p}\leq \|\rho_0\|_{L^p}$,
\item[iii)]  if $\rho_0\in L^1\cap L^\infty (\R^2)$ and $D>2C_2M_0$, then there exists a constant $M>0$ such that 
$0\leq \rho(y,x,t)\leq M$ holds for a.e.~$t\geq 0$, $(x,y)\in\R^2$.
\item [iv)] if $(x,y)\mapsto (x^2+y^2)\rho_0(x,y)\in L^1(\R^2)$, then, for any $t\geq 0$, 
$(x,y)\mapsto (x^2+y^2)\rho(x,y,t)\in L^1(\R^2)$, and $m_2(t)\leq m_2(0) +2DM_0t$.
\end{itemize}
\end{prop}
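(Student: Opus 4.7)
My plan is to organize the four items as four separate short arguments, each of which amounts to invoking an identity or inequality already derived in this section; since the hypothesis is that $\rho$ is sufficiently smooth, all integrations by parts and boundary-at-infinity arguments are legitimate, and no approximation scheme is needed at the level of this statement.

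For (i), I would integrate \eqref{pde} over $\R^2$: the right-hand side is a total divergence, so under the decay properties packaged in ``sufficiently smooth'' the flux integral vanishes and $\iint \rho(t)\ud y\ud x$ is constant in time. For (iv), I would specialise the inequality \eqref{mtk} to $k=2$, which reads $\ds\frac{\ud m_2}{\ud t}\leq 2D m_0(t)=2DM_0$ by (i); integrating in time yields exactly the announced bound, and the sign argument underlying \eqref{mtk} (monotonicity of $x\mapsto \sgn(x)|x|^{k-1}$) has already been carried out above the statement.

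For (ii), I would read off the dissipation identity \eqref{Lpdiss}: the coefficient $\ds\frac{4(p-1)}{p}\bigl(\ds\frac{D}{p}-C_pM_0\bigr)$ in front of $\iint|\nabla\rho^{p/2}|^2\ud y\ud x$ is non-negative precisely under the hypothesis $D>pC_pM_0$, so $\ds\frac{\ud}{\ud t}\iint\rho^p\ud y\ud x\leq 0$ and therefore $\|\rho(t)\|_{L^p}\leq \|\rho_0\|_{L^p}$. Monotonicity for $p=1$ is contained in (i), and for general $p\in[1,\infty)$ satisfying the threshold it follows from this one-line integration.

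For (iii), I would deploy the Alikakos--Moser iteration already set up immediately before the statement. Starting from (ii) with $p=2$ under \eqref{DcritL2} gives the $L^2$ bound $\|\rho(t)\|_{L^2}\leq\|\rho_0\|_{L^2}$, which is precisely what is needed to validate the Gagliardo--Nirenberg step leading to \eqref{estLp00}. Along the sequence $p_k=2^k$ and with the choice $\omega_k\sim p_k^{-2}$, the dissipation absorbs the low-order term and one obtains a recursion on $\iint v_k\ud y\ud x$ whose prefactors satisfy $\delta_k\leq A^k$. Telescoping yields $\|\rho(t)\|_{L^{p_k}}\leq L\,A^{r_k/p_k}$ with $r_k/p_k$ tending to a finite limit, and passing to $k\to\infty$ produces the $L^\infty$ bound. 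The main technical obstacle is precisely this iterative bookkeeping of the $\omega_k$, $\delta_k$ and $r_k$; the remaining three items reduce to a direct invocation of identities already proved, so the proof is essentially a matter of stitching together the preceding calculations in the order (i), (iv), (ii), (iii).
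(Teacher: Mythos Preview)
Your proposal is correct and matches the paper's approach exactly: the proposition is presented in the paper as a recap of the computations carried out in the preceding subsections (moments, entropy/$L^p$ dissipation, and the Alikakos iteration), and your plan simply points each item to the relevant identity or inequality already derived there. There is nothing to add.
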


\subsection{Estimates for the regularized problem} 

To analyze the solutions of the regularized PDE \eqref{reg_pde}-\eqref{approxF} and justify their convergence as $\eps$ tends to $0$, we will need estimates uniform with respect to  $\eps$. The following proposition is the equivalent of Proposition~\ref{apriori} for the regularized solution.

\begin{prop} \label{lemma:rho_eps_estimates}
Let  $\big(\rho^{(\varepsilon)}\big)_{\eps>0}$ be the sequence of  solutions of the regularized PDE \eqref{reg_pde}--\eqref{approxF}, associated to the  initial data $\big(\rho^{(\varepsilon)}_0\big)_{\eps>0}$.
We assume that 
\[\textrm{$\big(\rho^{(\varepsilon)}_0\big)_{\eps>0}$ is bounded in $L^1(\R^2)\cap L^\infty(\R^2)$.}\]
Then,  the following properties are satisfied:
\begin{itemize}
\item[i)] mass is conserved $\iint \rho^{(\varepsilon)}(t)\ud y\ud x=\iint \rho^{(\varepsilon)}_0\ud y\ud x$,
\item[ii)] if  $D>pC_pM_0$, then $\rho^{(\varepsilon)} $ is bounded in  $L^\infty(0,\infty;L^p(\R^2))$ and  $\|\rho^{(\varepsilon)} (t)\| _{L^p}\leq \|\rho^{(\varepsilon)}_0\|_{L^p}$,
\item[iii)]  if  $D>2C_2M_0$, then $\rho^{(\varepsilon)} $ is bounded in $L^\infty(0,\infty;L^2(\R^2))\cap L^\infty((0,\infty)\times\R^2)$,
\item [iv)] if $(x,y)\mapsto(x^2+y^2)\rho^{(\varepsilon)}_0(x,y)$ is bounded in $ L^1(\R^2)$, then 
$(x^2+y^2)\rho^{(\varepsilon)}(x,y,t)$ is bounded in  $L^\infty(0,T;L^1(\R^2))$ for any $0<T<\infty$.
\end{itemize}
\end{prop}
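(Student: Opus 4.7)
The plan is to revisit each item of Proposition~\ref{apriori} and check that the argument goes through verbatim for $\rho^{(\eps)}$, with every constant independent of $\eps$. The structural ingredient that makes this possible is the identity $\nabla\cdot\vec F^{(\eps)}[\rho] = -4\,T^{(\eps)}\rho$, which replaces \eqref{div_f} and follows at once from $\p_u\sgne(u)=2\delta^{(\eps)}(u)$ combined with the definition of $T^{(\eps)}$ as convolution against a normalized Gaussian. Mass conservation (item i) then follows by integrating \eqref{reg_pde} against the constant function $1$, since the regularity furnished by the fixed-point construction of Section~\ref{sec:existence} ensures that all boundary terms at infinity vanish.

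For items ii and iii, I reproduce the entropy calculation leading to \eqref{eq:dH} with $h(z)=z^p$. Only one term changes: the boundary contribution, which under the new divergence identity reads
\[
4(p-1)\iint \big(\rho^{(\eps)}\big)^p\,T^{(\eps)}\rho^{(\eps)}\ud y \ud x,
\]
in place of $4(p-1)\iint (\rho^{(\eps)})^{p+1}\ud y\ud x$. I control it by H\"older's inequality with exponents $(p+1)/p$ and $p+1$, together with the Young convolution inequality $\|T^{(\eps)}\rho^{(\eps)}\|_{L^{p+1}}\leq \|\rho^{(\eps)}\|_{L^{p+1}}$, which holds with constant $1$ for every $\eps>0$ because $T^{(\eps)}$ convolves against a probability density; this restores exactly $4(p-1)\iint(\rho^{(\eps)})^{p+1}\ud y\ud x$ as an upper bound. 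As a result, \eqref{Lpdiss} and, after the refined Cauchy--Schwarz step invoking the $L^2$ bound, also \eqref{estLp00} hold unchanged with the same $K_1$, and the Alikakos iteration delivering the $L^\infty$ bound carries over verbatim.

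For the moment estimate (item iv), I reproduce the integration by parts that produces \eqref{mtk}, now with $F^{(\eps)}_x$ in place of $F_x$. The $(x,x')$-symmetrisation still applies: $\sgne$ is odd, $\delta^{(\eps)}$ is even and non-negative, so exchanging the variables and averaging turns
\[
\int_{\R^4}\sgn(x)|x|^{k-1}\sgne(x-x')\delta^{(\eps)}(y-y')\rho^{(\eps)}(x',y')\rho^{(\eps)}(x,y)\ud x\ud y\ud x'\ud y'
\]
into a half-integral whose integrand contains the pointwise non-negative factor $[\sgn(x)|x|^{k-1}-\sgn(x')|x'|^{k-1}]\sgne(x-x')$. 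Treating $F^{(\eps)}_y$ in the same way yields the same $\eps$-free inequality $\dot m_k^{(\eps)}\leq Dk(k-1)m_{k-2}^{(\eps)}$; specialising to $k=2$ and using item i produces the claimed uniform $L^1$ bound on $(x^2+y^2)\rho^{(\eps)}(x,y,t)$ over any bounded time interval.

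The only delicate point is to verify that no constant secretly depends on $\eps$. This is immediate: the Gagliardo--Nirenberg--Sobolev constants $C_p$ and $\bar C_2$ in \eqref{GNS}--\eqref{GNS2} are purely geometric, the Young convolution bound is $\|T^{(\eps)}\|_{L^1}=1$ for all $\eps>0$, and the Alikakos recursion uses nothing beyond the $L^p$ differential inequality just derived. I therefore do not expect any genuine obstacle; Proposition~\ref{lemma:rho_eps_estimates} is essentially a bookkeeping update of Proposition~\ref{apriori}, driven entirely by the identity $\nabla\cdot\vec F^{(\eps)}[\rho]=-4\,T^{(\eps)}\rho$.
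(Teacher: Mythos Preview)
Your proposal is correct and follows essentially the same route as the paper. In particular, the key identity $\nabla\cdot\vec F^{(\eps)}[\rho]=-4\,T^{(\eps)}\rho$ together with the H\"older/Young bound $\iint(\rho^{(\eps)})^p\,T^{(\eps)}\rho^{(\eps)}\leq\iint(\rho^{(\eps)})^{p+1}$ is precisely what the paper invokes for items ii) and iii); for iv) the paper simply says ``direct comparison to a pure diffusion,'' and your explicit symmetrisation with the odd non-decreasing function $\sgne$ is the natural way to unpack that phrase.
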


\noindent
{\bf Proof.}
Item i) is clear. 
The proof of iv) repeats the same arguments as above, with a direct comparison to a pure diffusion.
For ii) and iii), we will need the following consequence of the definition \eqref{approxF}
\[
\partial_x F_x^{(\varepsilon)}[\rho^{(\varepsilon)}](x,y,t) = -2 \iint \delta^{(\varepsilon)}(x-x')\delta^{(\varepsilon)}(y-y')\rho^{(\varepsilon)}(x-x',y-y',t)\ud x'\ud y'
\]
so that \eqref{div_f} becomes
\[
\nabla \cdot F^{(\varepsilon)} [\rho^{(\varepsilon)}]=-4 T^{(\varepsilon)}( \rho^{(\varepsilon)})
\]
where, as said above,  $T^{(\varepsilon)}$ is the convolution operator with the normalized 2d Gaussian kernel. Furthermore, the H\"older inequality yields
\[\begin{array}{lll}\ds
\iint (\rho^{(\varepsilon)})^p  T^{(\varepsilon)}( \rho^{(\varepsilon)})\ud y\ud x
&\leq & \ds\left(\iint (\rho^{(\varepsilon)})^{p+1} \ud y\ud x\right)^{p/(p+1)} \left(\iint |T^{(\varepsilon)} ( \rho^{(\varepsilon)})|^{p+1} \ud y\ud x\right)^{1/(p+1)} \\
&\leq & \ds\left(\iint (\rho^{(\varepsilon)})^{p+1}\ud y\ud x\right)^{p/(p+1)} \left(\iint (\rho^{(\varepsilon)})^{p+1}\ud y\ud x\right)^{1/(p+1)} \\
&\leq & \ds\iint (\rho^{(\varepsilon)})^{p+1} \ud y\ud x.\label{eq:divepsi}
\end{array}\]
With this observation, we can go back to 
 \eqref{eq:dH} adapted to the regularized problem and we derive the estimates as we did for the singular equation. 
 We refer the reader to \cite{Blanchet06} for similar reasonings.
 \qed

\section{Regularized problem}
\label{sec:existence}

Let $\eps>0$.  The initial data  $\rho^{(\eps)}_0$ is a given non--negative  function in $L^1(\R^2)\cap L^\infty(\R^2)$.
We  introduce the operator $$\mathcal{T}:g\mapsto \mathcal T(g)=\rho$$ where $\rho$ is the solution of
the linear parabolic PDE
\begin{equation}\label{linear}
\partial_t \rho =D\Delta \rho -\nabla \cdot (\vec{F}^{(\eps)}[g] \rho),\qquad
\rho\Big|_{t=0}=\rho^{(\eps)}_0.
\end{equation}
We will show that $\mathcal T$ fulfils the hypotheses
of the Schauder theorem in a suitable functional framework. This will lead to the existence of a fixed point, which defines a solution of the non--linear problem.
Then, we will investigate the uniqueness independently.
Gathering together these arguments, we will prove the following statement.

\begin{theorem}
Let $\rho^{(\eps)}_0\in L^1(\R^2)\cap L^\infty(\R^2)$ be a non-negative function.
Then, the problem \eqref{reg_pde}--\eqref{approxF} with $\rho\big|_{t=0}=\rho^{(\eps)}_0$ admits a unique solution 
$\rho^{(\eps)}\in C([0,T];L^2(\R^2))\cap L^2(0,T;H^1(\R^2))$.
\end{theorem}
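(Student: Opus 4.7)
The plan is to apply Schauder's fixed point theorem to the operator $\mathcal{T}$ introduced in \eqref{linear}, and then establish uniqueness separately by an $L^2$ energy estimate for the difference. The decisive simplification compared with the singular equation \eqref{pde} is that the mollified drift $\vec F^{(\eps)}[g]$ is smooth, bounded, and Lipschitz in $(x,y)$, with $\eps$-dependent constants that are controlled only through $\|g\|_{L^1}$: from $F^{(\eps)}_x[g]=\sgn\star_x T^{(\eps)}g$ the identity $\partial_x F^{(\eps)}_x[g]=-2T^{(\eps)}g$ together with Young's inequality gives $\|F^{(\eps)}_x[g]\|_{L^\infty}\leq C_\eps\|g\|_{L^1}$ and $\|\partial_x F^{(\eps)}_x[g]\|_{L^\infty}\leq C'_\eps\|g\|_{L^1}$, and analogous bounds hold for $F^{(\eps)}_y$. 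Consequently, for $g$ with prescribed mass the linear parabolic equation \eqref{linear} has smooth bounded coefficients, and classical $L^2$ energy theory (Lions--Magenes or Ladyzhenskaya) produces a unique $\rho\in C([0,T];L^2(\R^2))\cap L^2(0,T;H^1(\R^2))$, with mass conservation from the divergence form and nonnegativity from the maximum principle.

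I would then build a closed convex subset $\mathcal{K}\subset C([0,T];L^2(\R^2))$ consisting of nonnegative functions with prescribed mass $M_0$, a uniform-in-time $L^\infty$ bound, a uniform $L^2$ bound, and a second moment bound of the form $m_2(t)\leq m_2(0)+C_\eps t$, all chosen large enough to be stable under $\mathcal{T}$. Stability follows by applying the regularized a priori estimates of Proposition~\ref{lemma:rho_eps_estimates}, together with a Gronwall absorption of the $\eps$-dependent drift contribution in the $L^\infty$ iteration. Continuity of $\mathcal{T}$ on $\mathcal{K}$, equipped with the $C([0,T];L^2)$ topology, is obtained by writing the equation solved by $\mathcal{T}(g_n)-\mathcal{T}(g)$, noting that on $\mathcal{K}$ the map $g\mapsto \vec F^{(\eps)}[g]$ is Lipschitz into $W^{1,\infty}(\R^2)$ (thanks to the bounded mass and the smoothness of the Gaussian mollifier), and closing by an energy estimate plus Gronwall.

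The compactness of $\mathcal{T}(\mathcal{K})$ is where I expect the main technical difficulty. The Aubin--Lions lemma applied to the bound of $\mathcal{T}(g)$ in $L^2(0,T;H^1(\R^2))$ and of $\partial_t\mathcal{T}(g)$ in $L^2(0,T;H^{-1}(\R^2))$, both directly read off from the PDE, only yields compactness in $L^2(0,T;L^2_{\mathrm{loc}}(\R^2))$. To upgrade this to compactness in $L^2(0,T;L^2(\R^2))$, and finally in $C([0,T];L^2(\R^2))$ by interpolating with the uniform $L^\infty_tL^2_x$ bound and the continuity in time inherited from the equation, I would use the uniform second moment control in $\mathcal{K}$ to secure tightness at infinity; this is precisely where item iv) of Proposition~\ref{lemma:rho_eps_estimates} becomes essential, since without it the unbounded domain would prevent the usual Rellich-type compactness from producing a strong global limit.

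Once Schauder's theorem yields a fixed point, uniqueness is standard: if $\rho^{(\eps)}_1,\rho^{(\eps)}_2$ are two solutions sharing the same initial datum, their difference $w=\rho^{(\eps)}_1-\rho^{(\eps)}_2$ satisfies
\[
\partial_t w = D\Delta w -\nabla\cdot\bigl(\vec F^{(\eps)}[\rho^{(\eps)}_1]\,w\bigr)-\nabla\cdot\bigl((\vec F^{(\eps)}[\rho^{(\eps)}_1]-\vec F^{(\eps)}[\rho^{(\eps)}_2])\,\rho^{(\eps)}_2\bigr),
\]
and multiplying by $w$, integrating by parts, and using the $W^{1,\infty}$ control of $\vec F^{(\eps)}[\rho^{(\eps)}_1]$ together with the Lipschitz dependence of $\vec F^{(\eps)}$ on its argument (on bounded-mass sets) yields a closed Gronwall inequality for $\|w(t)\|_{L^2}^2$, forcing $w\equiv 0$.
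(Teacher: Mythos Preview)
Your existence argument via Schauder is essentially the paper's: same operator $\mathcal T$, same convex set built from mass, $L^\infty$, and second--moment constraints, same Aubin--Lions compactness upgraded to the whole plane via the moment bound. Two small remarks. First, the paper runs Schauder in the topology of $L^2((0,T)\times\R^2)$ rather than $C([0,T];L^2(\R^2))$; this is slightly cheaper on the compactness side. Second, your claim that $g\mapsto\vec F^{(\eps)}[g]$ is \emph{Lipschitz} into $W^{1,\infty}$ in the $L^2$ topology is an overclaim: the linear bound one has is $\|\vec F^{(\eps)}[g]\|_{L^\infty}\leq C_\eps\|g\|_{L^1}$ (Lemma~\ref{lemma:estF}-i)), and $L^2$ convergence on $\R^2$ does not imply $L^1$ convergence. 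On the convex set $\mathcal K$, where moments are uniformly bounded, one recovers \emph{continuity} by a cut--off at radius $R$ (this is exactly what the paper does in \eqref{eq:main_estimate}), and continuity is all Schauder needs---so the existence half goes through.

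The uniqueness argument, however, has a genuine gap. In your $L^2$ energy estimate for $w=\rho^{(\eps)}_1-\rho^{(\eps)}_2$, the dangerous term is
\[
\int \big(\vec F^{(\eps)}[\rho^{(\eps)}_1]-\vec F^{(\eps)}[\rho^{(\eps)}_2]\big)\rho^{(\eps)}_2\cdot\nabla w
\;\leq\;\frac{D}{2}\|\nabla w\|_{L^2}^2+\frac{1}{2D}\|\rho^{(\eps)}_2\|_{L^2}^2\,\|\vec F^{(\eps)}[w]\|_{L^\infty}^2,
\]
and the only available pointwise bound is $\|\vec F^{(\eps)}[w]\|_{L^\infty}\leq C_\eps\|w\|_{L^1}$ (or the weighted version $C_\eps\big(\int(1+|z|^2)w^2\big)^{1/2}$ of Lemma~\ref{lemma:estF}-ii)). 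Neither $\|w\|_{L^1}$ nor $\int(1+|z|^2)w^2$ is dominated by $\|w\|_{L^2}^2$, so the Gr\"onwall inequality for $\|w(t)\|_{L^2}^2$ does not close. The paper's own continuity estimate \eqref{eq:main_estimate} exhibits exactly this obstruction: the right--hand side carries an extra term of size $C/R^{2-s}$ with a prefactor $e^{C(1+R^s)t}$ in front of the Gr\"onwall, and letting $R\to\infty$ does not force the left--hand side to zero.

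The paper avoids this by switching to $L^1$ via the Duhamel formula: writing
$(\rho_1-\rho_2)(t)=\int_0^t\nabla H_{t-s}\star\big(\vec F^{(\eps)}[\rho_2]\rho_2-\vec F^{(\eps)}[\rho_1]\rho_1\big)\ud s$
and using $\|\nabla H_{t-s}\|_{L^1}\leq C_0(t-s)^{-1/2}$ together with $\|\vec F^{(\eps)}[w]\|_{L^\infty}\leq C_\eps\|w\|_{L^1}$ gives
\[
\|w(t)\|_{L^1}\leq B\int_0^t\frac{\|w(s)\|_{L^1}}{\sqrt{t-s}}\ud s,
\]
which \emph{does} close, via the singular Gr\"onwall Lemma~\ref{sing_G}. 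So for uniqueness you should either reproduce this $L^1$--Duhamel argument, or else run your energy estimate in a weighted $L^2$ norm (or jointly in $L^1$ and $L^2$) so that the $\vec F^{(\eps)}[w]$ term is actually absorbed.
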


\subsection{Preparing for the Schauder theorem: a priori estimates}
We observe  that $$
\begin{array}{l}
\sgn^{(\eps)} \in C^\infty(\R) \cap L^\infty(\R), 
\\
x\mapsto \ds\frac{\sgn^{(\eps)}(x)}{\sqrt{1+x^2}} \in L^2(\mathbb{R}), \\
\delta^{(\eps)}=\ds\frac12 \ds\frac{\ud }{\ud x}\sgn^{(\eps)} \in C^\infty(\R)\cap L^1(\R)\cap  L^\infty(\mathbb{R}).
\end{array}$$
Owing
to these properties, we obtain 
estimates (that depend on $\eps$) on the regularized force.
\begin{lemma}\label{lemma:estF}
The following estimates hold
\begin{itemize}
\item[i)] $|F^{(\eps)} _x[g]|\leq \|\sgne\|_\infty \|\delta^{(\eps)}\|_\infty \|g\|_{L^1} = \frac{1}{\eps\sqrt{2\pi}}\|g\|_{L^1}$,
\item[ii)] $|F^{(\eps)} _x[g]| \leq \sqrt\pi\ \|\delta^{(\eps)}\|_{L^2}  %\left(\int \frac{|\sgne(x-x')|}{1+x'^2}\ud x'\right)^{1/2} 
\left( \int(1+x'^2)g^2(x',y')\ud x'\ud y'\right)^{1/2}$,
\item[iii)] $|\partial_x F^{(\eps)} _x[g]| \leq 2\|\delta^{(\eps)}\|^2_{L^1} %\|\partial_x \sgne\|_{L^1} 
\|g\|_{L^\infty} = 2 \|g\|_{L^\infty}$,
\item[iv)] $|\partial_x F^{(\eps)} _x[g]| \leq 2\|\delta^{(\eps)}\|^2_{L^\infty} %\|\partial_x \sgne\|_{L^\infty} 
\|g\|_{L^1} = \frac{2}{\pi \eps^2}\|g\|_{L^1} $.
\end{itemize}
Of course, the same estimates apply to $F^{(\eps)} _y$ as well.
\end{lemma}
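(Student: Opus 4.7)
The four estimates are all variations on the same theme: bound a convolution of $g$ against a product of the regularized kernels $\sgn^{(\eps)}$ and $\delta^{(\eps)}$, using Hölder-type inequalities and the three regularity facts listed just above the lemma. Before starting I would record the explicit norms $\|\sgn^{(\eps)}\|_\infty=1$, $\|\delta^{(\eps)}\|_{L^1}=1$, $\|\delta^{(\eps)}\|_\infty=1/(\eps\sqrt{2\pi})$, $\|\delta^{(\eps)}\|_{L^2}^2=1/(2\eps\sqrt{\pi})$, and note the identity
\[
\partial_x F^{(\eps)}_x[g](x,y)=-2\iint \delta^{(\eps)}(x-x')\,\delta^{(\eps)}(y-y')\,g(x',y')\ud x'\ud y',
\]
which follows from $\frac{\ud}{\ud u}\sgn^{(\eps)}=2\delta^{(\eps)}$ and is the formula already used in the proof of Proposition~\ref{lemma:rho_eps_estimates}.

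For (i), I pull the $L^\infty$ norms of $\sgn^{(\eps)}$ and $\delta^{(\eps)}$ out of the double integral defining $F^{(\eps)}_x[g]$, leaving $\iint|g|\ud x'\ud y'=\|g\|_{L^1}$. Estimates (iii) and (iv) are handled in the same way starting from the formula above for $\partial_x F^{(\eps)}_x[g]$: for (iii) one uses $\|g\|_\infty$ together with $\iint\delta^{(\eps)}(x-x')\delta^{(\eps)}(y-y')\ud x'\ud y'=\|\delta^{(\eps)}\|_{L^1}^2=1$, and for (iv) one uses $\|\delta^{(\eps)}\|_\infty^2$ together with $\|g\|_{L^1}$. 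These three are immediate once the right factor is pulled out.

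The interesting step is (ii), where the statement does not depend on $(x,y)$ and involves the anisotropic weight $1+x'^2$, so it must come from a Cauchy--Schwarz split in the $x'$-variable that uses precisely the fact $\sgn^{(\eps)}(\cdot)/\sqrt{1+\cdot^2}\in L^2(\R)$. My plan is to write
\[
|F^{(\eps)}_x[g](x,y)|\le \iint \frac{|\sgn^{(\eps)}(x-x')|}{\sqrt{1+x'^2}}\cdot\sqrt{1+x'^2}\,|g(x',y')|\,\delta^{(\eps)}(y-y')\ud x'\ud y',
\]
apply Cauchy--Schwarz in $x'$, and bound
\[
\int \frac{|\sgn^{(\eps)}(x-x')|^2}{1+x'^2}\ud x'\le \int\frac{\ud x'}{1+x'^2}=\pi,
\]
using $\|\sgn^{(\eps)}\|_\infty\le 1$. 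This gives
\[
|F^{(\eps)}_x[g](x,y)|\le \sqrt{\pi}\int \delta^{(\eps)}(y-y')\Bigl(\int(1+x'^2)|g(x',y')|^2\ud x'\Bigr)^{1/2}\ud y',
\]
and a second Cauchy--Schwarz in $y'$ pulls out $\|\delta^{(\eps)}\|_{L^2}$ and produces the stated right-hand side.

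The symmetric statements for $F^{(\eps)}_y$ follow by exchanging the roles of $x$ and $y$ in the definition \eqref{approxF}, so no separate argument is needed. The only mild subtlety is the choice of the weight $\sqrt{1+x'^2}$ rather than $\sqrt{1+(x-x')^2}$ in (ii); the key point making this work is that $\sgn^{(\eps)}$ is uniformly bounded, so the shift in the argument of the $L^2$-in-$\R$ factor is harmless.
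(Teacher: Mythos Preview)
Your proof is correct and follows essentially the same route as the paper: items (i), (iii), (iv) are handled as direct convolution bounds after recording the explicit kernel norms and the identity for $\partial_x F^{(\eps)}_x[g]$, and for (ii) you apply Cauchy--Schwarz first in $x'$ with the weight $\sqrt{1+x'^2}$ (using $|\sgn^{(\eps)}|\le 1$ to get the factor $\sqrt\pi$) and then in $y'$ to extract $\|\delta^{(\eps)}\|_{L^2}$, exactly as the paper does.
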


\noindent{\bf Proof.}
It is worth bearing in mind that 
\[
0\leq \delta^{(\eps)}(x)\leq \ds\frac{1}{\eps\sqrt{2\pi}},\qquad
|\sgne(x)|\leq 1.\]
Items i), iii) and iv) are direct consequences of estimates on convolution products.
For ii) we use  the Cauchy-Schwarz inequality twice to obtain
\[\begin{array}{l}
|F^{(\eps)} _x[g](x,y)| \leq \ds \int \delta^{(\eps)}(y-y') \left( \int\frac{|\sgne(x-x')|^2}{1+x'^2}\ud x'\right)^{1/2}\left(\int (1+x'^2)g^2(x',y')\ud x'\right)^{1/2} \ud y'  \\
\quad\leq \ds \left( \int |\delta^{(\eps)}|^2(y-y')\ud y'\right)^{1/2} \left(\int \frac{|\sgne(x-x')|^2}{1+x'^2}\ud x'\right)^{1/2} \left(\int(1+x'^2)g^2(x',y')\ud x'\ud y'\right)^{1/2} \\
\quad\leq \ds \|\delta^{(\eps)}\|_{L^2}
\times \sqrt{\pi} \times \left(\int(1+x'^2)g^2(x',y')\ud x'\ud y'\right)^{1/2} .
\end{array}\]
\qed

\noindent
For any $g\in L^\infty(0,\infty;L^1(\R^2))$, owing to the observations in Lemma~\ref{lemma:estF}, 
the linear problem \eqref{linear} admits a unique solution, say in $C([0,\infty];L^2(\R^2))\cap L^2(0,\infty;H^1(\R^2))$, see \cite[Th.~X.9]{Brez}.
We can now derive estimates on the solution of \eqref{linear}.
\begin{lemma}\label{lemma:estrho}
Let $\rho=\mathcal T(g)$ be the solution of \eqref{linear}. It satisfies
\begin{itemize}
\item[i)] For any fixed time $t>0$ and any $p\in [1,\infty]$, $\rho(t) \in L^p(\R^2)$. More precisely, we have
\[
\iint \rho(x,y,t)^p\ud y\ud x \leq e^{4 (p-1)t\|g\|_{L^\infty(0,\infty;L^1(\R^2))} \|\delta^{(\eps)}\|_{L^\infty}^2
%\|d\sgn^{(\eps)}/\ud x\|_{L^\infty}
} \iint \rho^{(\eps)}_0(x,y)^p\ud y\ud x,
\]
and $\|\rho(t)\|_{L^\infty}\leq e^{4t \|g\|_{L^\infty(0,\infty;L^1(\R^2))} \|\delta^{(\eps)}\|_{L^\infty}^2 } \|\rho^{(\eps)}_0\|_{L^\infty}$.
\item[ii)] For any fixed time $t$, $\iint (x^2+y^2) \rho(x,y,t)\ud y\ud x$ is finite. More precisely, we have
\[\begin{array}{l}
\ds\iint (x^2+y^2) \rho(x,y,t)\ud y\ud x\\
\ds \leq e^t \left(
\iint (x^2+y^2) \rho^{(\eps)}_0(x,y)\ud y\ud x
+t\|\rho^{(\eps)}_0\|_{L^1}\Big(4D+\|\sgne\|_{L^\infty}^2\|\delta^{(\eps)}\|_\infty^2\|g\|_{L^\infty(0,\infty;L^1(\R^2))}^2\Big) \right).
\end{array}\]
\item[iii)]For any $0\leq t\leq T<\infty$, we have 
%\textcolor{blue}{$t$ does not appear in the inequality...}
$$\ds\|\nabla  \rho\|^2_{L^2((0,t)\times\R^2)}\leq \frac{1}{2D} 
 e^{4T \|g\|_{L^\infty(0,\infty;L^1(\R^2))} \|\delta^{(\eps)}\|_{L^\infty}^2 }\|\rho^{(\eps)}_0\|_{L^2}
.$$
\end{itemize}
\end{lemma}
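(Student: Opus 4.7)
The plan is to derive each estimate by multiplying the linear equation \eqref{linear} by an appropriate test function and integrating, then controlling the drift contribution using the identity $\nabla\cdot \vec F^{(\eps)}[g]=-4T^{(\eps)}(g)$ (established in the proof of Proposition \ref{lemma:rho_eps_estimates}) together with the uniform bounds of Lemma \ref{lemma:estF}.

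For item i), I multiply \eqref{linear} by $p\rho^{p-1}$ and integrate by parts. The diffusion contributes the non-positive term $-Dp(p-1)\int \rho^{p-2}|\nabla\rho|^2$, which I simply discard. Using $\rho^{p-1}\nabla\rho=\tfrac{1}{p}\nabla\rho^p$, the drift reduces to $-(p-1)\int \rho^p\,\nabla\cdot\vec F^{(\eps)}[g]$. Since $|\nabla\cdot\vec F^{(\eps)}[g]|\leq 4\|T^{(\eps)}(g)\|_{L^\infty}\leq 4\|\delta^{(\eps)}\|_{L^\infty}^2\|g\|_{L^1}$, I obtain the differential inequality
\[
\frac{d}{dt}\iint \rho^p\,\mathrm{d}y\,\mathrm{d}x \leq 4(p-1)\|\delta^{(\eps)}\|_{L^\infty}^2\|g\|_{L^\infty(0,\infty;L^1)}\iint \rho^p\,\mathrm{d}y\,\mathrm{d}x,
\]
and Gronwall gives the claim. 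The $L^\infty$ bound is obtained by taking the $p$-th root and letting $p\to\infty$: the exponent $(p-1)/p$ tends to $1$, and since $\rho_0^{(\eps)}\in L^1\cap L^\infty$ we have $\|\rho_0^{(\eps)}\|_{L^p}\to\|\rho_0^{(\eps)}\|_{L^\infty}$.

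For item ii), I multiply by $(x^2+y^2)$ and integrate. The Laplacian yields $D\iint\Delta(x^2+y^2)\rho=4D\|\rho_0^{(\eps)}\|_{L^1}$ after using mass conservation for the linear equation. After one integration by parts, the drift becomes $2\iint(xF^{(\eps)}_x[g]+yF^{(\eps)}_y[g])\rho$; the Young inequality $2|x||F^{(\eps)}_x[g]|\leq x^2+|F^{(\eps)}_x[g]|^2$ combined with Lemma \ref{lemma:estF}(i) (which gives a pointwise bound on $|F^{(\eps)}_x[g]|$) produces
\[
m'(t)\leq m(t)+\|\rho_0^{(\eps)}\|_{L^1}\bigl(4D+\|\sgne\|_{L^\infty}^2\|\delta^{(\eps)}\|_{L^\infty}^2\|g\|_{L^\infty(0,\infty;L^1)}^2\bigr),
\]
where $m(t)=\iint(x^2+y^2)\rho\,\mathrm{d}y\,\mathrm{d}x$. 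Gronwall's lemma, combined with the elementary inequality $e^t-1\leq te^t$, yields the stated bound. For item iii), I multiply by $\rho$; using $\rho\nabla\rho=\tfrac12\nabla\rho^2$ and integrating by parts, the drift collapses to $-\tfrac12\iint \rho^2\,\nabla\cdot\vec F^{(\eps)}[g]$, bounded as in (i). This gives the energy inequality $\tfrac{d}{dt}\iint\rho^2+2D\iint|\nabla\rho|^2\leq 4\|\delta^{(\eps)}\|_{L^\infty}^2\|g\|_{L^\infty(0,\infty;L^1)}\iint\rho^2$. Integrating in time over $[0,t]\subset[0,T]$ and substituting the $L^2$ bound from (i) with $p=2$ into the right-hand side closes the argument.

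The main subtlety I foresee is justifying the integrations by parts rigorously. For (i) with $p>2$ I would first regularize $\rho^{p-1}$ (for instance by $(\rho+\eta)^{p-1}-\eta^{p-1}$ and letting $\eta\to 0$), using that $\rho\in C([0,T];L^2)\cap L^2(0,T;H^1)$ from standard parabolic theory (\cite[Th.~X.9]{Brez}) together with the already-established $L^\infty$ bound. For (ii), the weight $x^2+y^2$ is not admissible as a test function in the natural energy space, so I would multiply instead by $\chi_R(x^2+y^2)$ with a smooth cutoff $\chi_R$, derive the estimate, and pass to the limit $R\to\infty$; the fact that the right-hand side depends only on integral quantities makes this passage uniform. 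The required mass conservation then follows a posteriori from the moment bound, which provides enough spatial decay to legitimize all the manipulations.
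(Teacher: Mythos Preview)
Your proof is correct and follows essentially the same approach as the paper's: multiply by $p\rho^{p-1}$, $|z|^2$, and $\rho$ respectively, integrate by parts, bound the drift via the $L^\infty$ control on $\nabla\cdot\vec F^{(\eps)}[g]$ (the paper invokes Lemma~\ref{lemma:estF}-iv) rather than the identity $\nabla\cdot\vec F^{(\eps)}[g]=-4T^{(\eps)}(g)$, but the resulting bound is identical), and conclude with Gr\"onwall. The only cosmetic difference is that for ii) the paper first applies Cauchy--Schwarz on the integral and then Young's inequality, whereas you apply Young pointwise; your additional paragraph on rigorously justifying the integrations by parts and the use of cutoffs is a welcome level of care that the paper omits.
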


\noindent
{\bf Proof.}
i) We compute
\[\begin{array}{l}
\ds\frac{\ud}{\ud t}\iint \rho^p \ud y\ud x+Dp(p-1)\iint \rho^{p-2}|\nabla \rho|^2 \ud y\ud x= \iint F^{(\eps)}[g]\rho p(p-1)\cdot \nabla \rho \rho^{p-2} \ud y\ud x  \\
\qquad
\qquad\qquad \ds 
= -(p-1)\iint \nabla\cdot F^{(\eps)}[g] \rho^p\ud y\ud x \\
\qquad
\qquad\qquad
\ds
\leq (p-1)\|\nabla\cdot F^{(\eps)}[g] \|_{L^\infty} \ds\iint \rho^p\ud y\ud x \\
\qquad
\qquad\qquad
\ds\leq 4\|\delta^{(\eps)}\|^2_{L^\infty} \|g\|_{L^1}\times (p-1)\iint \rho^p\ud y\ud x.\label{eq:Lpestimate}
\end{array}\]
The last line uses Lemma \ref{lemma:estF}-iv). Gr\"onwall's lemma then 
yields i).
The $L^\infty$ estimate follows by taking the limit $p\to \infty$.
Estimate iii) is obtained by specifying to the case $p=2$ and considering the dissipation term.

 ii. Let us use the shorthand notation $z=(x,y)$. We get 
\begin{eqnarray}
\frac{\ud}{\ud t}\iint |z|^2 \rho\ud z &=& 4D\iint \rho \ud z+2\iint \rho F^{(\eps)}[g] \cdot z \ud z \nonumber \\
&\leq& 4D\iint \rho \ud z+ 2\|F^{(\eps)}[g] \|_{L^\infty} \left(\iint |z|^2\rho \ud z \right)^{1/2}\left(\iint \rho \ud z \right)^{1/2} \nonumber \\
&\leq & 4D\iint \rho \ud z+ \|F^{(\eps)}[g] \|_{L^\infty}^2 \iint \rho \ud z+ \iint |z|^2\rho \ud z\nonumber \\
&\leq & (4D+ \|\sgne\|_\infty^2 \|\delta^{(\eps)}\|_\infty^2 \|g\|_{L^1}^2)\iint \rho \ud z+ \iint |z|^2\rho \ud z \nonumber
\end{eqnarray}
by using  Lemma \ref{lemma:estF}-i). The Gr\"onwall lemma allows us to conclude.

iii) We have\begin{eqnarray}
\ds\frac{\ud}{\ud t}\iint \rho^2 \ud y\ud x+2D\iint |\nabla \rho|^2 \ud y\ud x   &\leq& 4\|\delta^{(\eps)}\|^2_{L^\infty} \|g\|_{L^1}\times \iint \rho^2\ud y\ud x.
\end{eqnarray}
Inserting the estimate of item i) leads to
\begin{eqnarray}
\ds\frac{\ud}{\ud t}\iint \rho^2 \ud y\ud x+2D\iint |\nabla \rho|^2 \ud y\ud x   &\leq& C e^{Ct} \iint \rho^{(\eps)}_0(x,y)^2\ud y\ud x,
\end{eqnarray}
with $C=4\|\delta^{(\eps)}\|^2_{L^\infty}  \|g\|_{L^\infty(0,\infty;L^1(\R^2))}$.
Integrating over time then yields
\begin{eqnarray}
\left(\iint \rho^2 \ud y\ud x\right) (t)-\left(\iint \rho^2 \ud y\ud x\right) (t=0)+2D \ds\|\nabla  \rho\|^2_{L^2((0,t)\times\R^2)} &\leq& (e^{Ct}-1) \|\rho^{(\eps)}_0\|_{L^2}^2.
\end{eqnarray}
Hence
\begin{eqnarray}
\ds\|\nabla  \rho\|^2_{L^2((0,t)\times\R^2)} &\leq& \frac{e^{Ct}}{2D} \|\rho^{(\eps)}_0\|_{L^2}^2 \\
&\leq& \frac{1}{2D} 
 e^{4T \|g\|_{L^\infty(0,\infty;L^1(\R^2))} \|\delta^{(\eps)}\|_{L^\infty}^2 } \|\rho^{(\eps)}_0\|_{L^2}^2.
\end{eqnarray}
\qed
%\begin{lemma}
%\[
%\dot{u} \leq a+bu~,~u(t=0)=u_0
%\]
%implies
%\[
%u \leq (u_0 +at)e^{bt}
%\]
%\end{lemma}

\subsection{Preparing for the Schauder theorem: definition of the functional framework}
Let $0<T<\infty$ be fixed once for all. We introduce  the  set
$\mathcal{C} $ consisting of the functions $g:[0,T]\times \mathbb{R}^2 \to [0,\infty)$, such that
\begin{itemize}
\item [i)] $\iint g\ud z =\iint \rho_0^{(\eps)}\ud z\leq M_0$,
\item [ii)] $\iint |z|^2 g \ud z\leq e^{C_1T}(\iint |z|^2 \rho^{(\eps)}_0\ud z  +C_2T)$,
\item[iii)] $\|g\|_\infty \leq e^{C_3T} \|\rho^{(\eps)}_0\|_\infty$,
\end{itemize}
By using the mass conservation property, the estimates in Lemma~\ref{lemma:estrho} allow us to choose the constants $C_1$, $C_2$ and $C_3$ (which depend on $\eps$) such  that 
$\mathcal{C}$ is convex, and stable upon application of $\mathcal{T}$.

\subsection{Preparing for the Schauder theorem: $\mathcal{T}$ is continuous}
We wish to establish the continuity of $\mathcal T:\mathcal{C} \rightarrow \mathcal{C} $ for the norm of $L^2((0,T)\times \R^2)$. 
For $i\in\{1,2\}$, let $\rho_i=\mathcal{T}(g_i)$, with $g_i\in \mathcal C$. 
By Lemma~\ref{lemma:estrho}-i), we already know that $\rho_i$ belongs to $L^\infty(0,T;L^2(\R^2))$.
We denote $\vec{F}_i^{(\eps)}=\vec F^{(\eps)}[g_i]$.
We get
\begin{eqnarray}
\frac{\ud}{\ud t} \iint (\rho_2-\rho_1)^2\ud z &=& -2D \iint |\nabla(\rho_2-\rho_1)|^2\ud z
+2 \iint \nabla(\rho_2-\rho_1)\cdot \vec{F}^{(\eps)}_2(\rho_2-\rho_1)\ud z
 \nonumber
\\
&&+
2\iint \rho_1\nabla(\rho_2-\rho_1)\cdot (\vec F^{(\eps)}_2-\vec F^{(\eps)}_1) \ud z
\nonumber \\
&\leq &  -2D \iint |\nabla(\rho_2-\rho_1)|^2 \ud z
+ \int |\nabla \cdot \vec F^{(\eps)}_2|\ (\rho_2-\rho_1)^2 \ud z
 \nonumber
\\ && +D\iint |\nabla(\rho_2-\rho_1)|^2 \ud z
+\frac{1}{D}\int \rho_1^2(\vec F^{(\eps)}_2-\vec F^{(\eps)}_1)^2 \ud z \nonumber \\
&\leq &-D\iint |\nabla(\rho_2-\rho_1)|^2 \ud z
+ 4 \|\delta^{(\eps)}\|^2_{L^\infty} %\|d\sgne/\ud x\|_{L^\infty}
\|g_2\|_{L^1} \iint (\rho_2-\rho_1)^2\ud z \nonumber \\
&&+ \frac{1}{D} \|\vec F^{(\eps)}_2-\vec F^{(\eps)}_1\|_{L^\infty}^2  \iint \rho_1^2\ud z .
\label{eq:L2estim}
\end{eqnarray}
We aim at controlling  $\|\vec F^{(\eps)}_2-\vec F^{(\eps)}_1\|_{L^\infty}^2$ by the difference  $g_2-g_1$ in $L^2$ norm. 
This cannot be done directly, and we should use further moment estimates.
To be more specific, we will use a splitting that makes $\|g_2-g_1\|_{L^2}$ appear plus an arbitrarily small contribution.
To this end, we use Lemma~\ref{lemma:estF}-ii).
For any $1<s<2$ and any $R>0$, we  write
\[\begin{array}{l}
\ds(F^{(\eps)}_{2,x}-F^{(\eps)}_{1,x})^2(x,y,t) 
\leq  \|\delta^{(\eps)}\|_{L^2}^2 \left(\int \frac{|\sgne(x-x')|^2}{1+|x'|^s}\ud x'\right) \iint (1+|x'|^s)(g_2-g_1)^2(x',y',t)\ud x'\ud y' \nonumber \\
\qquad\ds
\leq  C^{(\eps)} \iint (1+|z|^s) (g_2-g_1)^2(z,t) \ud  z
\nonumber \\
\qquad\ds
 \leq  C^{(\eps)}\left(\iint (g_2-g_1)^2(z,t)\ud z+  \iint_{|z|\leq R} |z|^s (g_2-g_1)^2(z,t)\ud z 
\right.\nonumber 
\\
\left.\qquad\qquad\qquad\qquad\qquad\qquad\qquad\qquad\qquad\qquad
+ \ds \iint_{|z|> R} |z|^s (g_2-g_1)^2(z,t)\ud z\right) 
\nonumber \\
\qquad\ds \leq C^{(\eps)}(1+R^s)\iint(g_2-g_1)^2(z,t)\ud z + C^{(\eps)} \iint_{|z|> R} \frac{|z|^2}{|z|^{2-s}} (g_2-g_1)^2 (z,t)\ud z
\nonumber \\
\qquad \ds\leq C^{(\eps)}(1+R^s)\|(g_2-g_1)(t)\|^2_{L^2}  + C^{(\eps)} \frac{\|g_2\|_{L^\infty}+\|g_1\|_{L^\infty}}{R^{2-s}} \iint |z|^2|g_2-g_1|(z,t)\ud z
 \nonumber \\
\qquad\ds \leq C^{(\eps)}(1+R^s)\|(g_2-g_1)(t)\|^2_{L^2}   + C^{(\eps)}  \frac{\|g_2\|_{L^\infty}+\|g_1\|_{L^\infty}}{R^{2-s}} 
\ds\left(\int |z|^2g_2 \ud z+\int |z|^2g_1\ud z \right) 
.\label{eq:FL2estim}
\end{array}\]
The same
inequalities obviously hold for $F^{(\eps)}_{2,y}-F^{(\eps)}_{1,y}$.
Coming back to  \eqref{eq:L2estim} yields
\[\begin{array}{l}
\ds\frac{\ud}{\ud t} \iint (\rho_2-\rho_1)^2 \ud z
+ D\iint |\nabla(\rho_2-\rho_1)|^2\ud z \\
\qquad \ds\leq   4 \|\delta^{(\eps)}\|^2_{L^\infty} \|g_2\|_{L^1} \iint (\rho_2-\rho_1)^2\ud z
 \\
\qquad\qquad+  \ds\frac{2C^{(\eps)}\|\rho_1(t)\|^2_{L^2}}{D}\left(
(1+R^s)\|(g_2-g_1)(t)\|^2_{L^2}+ 
 \frac{\|g_2\|_{L^\infty}+\|g_1\|_{L^\infty}}{R^{2-s}} 
\ds\left(\int |z|^2g_2 +\int |z|^2g_1 \right) 
\right).
%
%
%&&+ \frac{2C(1+R^s)}{D} 
%
%
%\int \rho_1^2  \int(g_2-g_1)^2 +\frac{2C(\|g_2\|_{L^\infty}+\|g_1\|_{L^\infty})}{DR^{2-s}} \int \rho_1^2 \nonumber \\
%&&\left(\int |z|^2g_2 +\int |z|^2g_1 \right)    
%\nonumber 
\end{array}\]
Bearing in mind that $C_3=4\|\delta^{(\eps)}\|_\infty^2 M_0$, we are ready to use the Gr\"onwall lemma which leads to 
\begin{equation}\begin{array}{l}
\ds\iint (\rho_1-\rho_2)^2 (z,t)\ud z 
\\
\ds
\qquad \leq e^{C_3T}\left\{ \int(\rho_2-\rho_1)^2(z,0)\ud z \right.\\
\qquad\qquad \ds+
\frac{2C^{(\eps)}\|\rho_1\|^2_{L^\infty(0,T;L^2(\R^2))}(1+R^s)}{D} \ds\int_0^t \iint (g_2-g_1)^2(z,\tau) \ud z\ud \tau
 \nonumber \\
\qquad\qquad \ds\left. + \frac{2C^{(\eps)}\|\rho_1\|^2_{L^\infty(0,T;L^2(\R^2))}(\|g_2\|_{L^\infty}+\|g_1\|_{L^\infty})}{DR^{2-s}}
\ds\int_0^t \iint |z|^2(g_2 +g_1)(z,\tau)\ud z\ud \tau    \right\}.
\label{eq:main_estimate}
\end{array}\end{equation}
When  $\rho_1$ and $\rho_2$ have the same initial condition  the first term of the right hand side vanishes.

Take  $g \in \mathcal{C}$ and consider a sequence $\big(g_n\big)_{n\in\mathbb{N}}\in\mathcal{C}$, such that $g_n \to g$ in $L^2([0,T]\times \mathbb{R}^2)$. 
We apply 
\eqref{eq:main_estimate} with 
 $\rho_n=\mathcal{T}(g_n)$ and
$\rho=\mathcal{T}(g)$; it reads
\[\begin{array}{l}
\ds\iint (\rho_n-\rho)^2 (z,t)\ud z \\
\qquad
\leq
\ds
e^{C_3T}\frac{2C^{(\eps)}\|\rho\|^2_{L^\infty(0,T;L^2(\R^2))}(1+R^s)}{D} \ds\int_0^T \iint (g_n-g)^2(z,\tau) \ud z\ud \tau
 \nonumber \\
\qquad\qquad \ds +e^{C_3T} \frac{2C^{(\eps)}\|\rho\|^2_{L^\infty(0,T;L^2(\R^2))}(\|g_n\|_{L^\infty}+\|g\|_{L^\infty})}{DR^{2-s}}
\ds\int_0^T \iint |z|^2(g_n +g)(z,\tau)\ud z\ud \tau
%
% e^{C_3T}\left\{ \frac{2C(1+R^s)}{D} \int \rho_1^2  \int(g_n-g)^2\ud z\ud t \right. \nonumber \\
%&&\left. + \frac{2C(\|g_n\|_{L^\infty}+\|g\|_{L^\infty})}{DR^{2-s}} \int \rho_1^2 \int |z|^2(g_n +g)\ud z\ud t   \right\}
\nonumber\end{array}\]
Pick $\eta>0$. Using the bounds that define
 the set $\mathcal{C}$,  it is possible to select $R(\eta)>0$ such that the last term 
 can be made smaller than $\eta/2$, uniformly with respect to $n$. Then, with this $R$ at  hand, there exists $N(\eta)\in \N$ such that for all  $n\geq N(\eta)$ 
 the first term
in the right hand side is smaller than $\eta/2$ too. 
Hence, 
\[
\int (\rho_n-\rho)^2 (z,t)\ud z
\leq \eta\]
holds for any $n\geq N(\eta)$ and $0\leq t\leq T<\infty$.
It shows that  $\rho_n\to \rho$ in $L^2((0,T)\times \mathbb{R}^2)$. Thus
$\mathcal{T}:\mathcal C\rightarrow \mathcal C$ is continuous for the strong topology of $L^2((0,T)\times \mathbb{R}^2)$.
\qed

\subsection{Preparing for the Schauder theorem: $\mathcal{T}$ is compact}
\label{sec:Comp}

Let $\big(g_n\big)_{n\in \mathbb{N}}$ be a sequence in $\mathcal{C}$, and  set $\rho_n=\mathcal{T}(g_n)$. 
Then, from Lemma \ref{lemma:estrho}, $\rho_n$ is bounded in $L^\infty\left( 0,T; L^2(\mathbb{R}^2)\right)$, and, furthermore,
 $\nabla \rho_n$ is bounded in $L^2\left( (0,T)\times\mathbb{R}^2\right)$.
We also have
\[
\partial_t \rho_n = D \nabla \cdot \nabla \rho_n -\nabla \cdot \left( \vec F^{(\eps)}[g_n]\rho_n\right)
\]
where, by Lemma~\ref{lemma:estF}-i),  
$ \vec F^{(\eps)}[g_n]$ is bounded in $L^\infty$ uniformly with respect to  $n$.
Therefore, $\partial_t \rho_n$ is  bounded in $L^2\left( 0,T; H^{-1}(\mathbb{R}^2)\right)$.
Since the embedding
$H^1(B(0,R))\subset L^2(B(0,R))$ is compact for any $0<R<\infty$, 
we can appeal to the Aubin-Simon lemma, see 
\cite[Cor.~4, Sect.~8]{JS}, to deduce that 
$\big(\rho_n\big)_{n\in\N}$ is relatively compact in $L^2((0,T)\times B(0,R))$
 for any $0<R<\infty$. We need to strengthen this local property to a global  statement.
The moment estimate and the $L^\infty$ estimate in Lemma~\ref{lemma:estrho}-i) and ii) respectively,
 allow us to justify that 
\begin{eqnarray}
\int_0^T\iint_{|z|\geq R} |\rho_n|^2 \ud z\ud t &\leq &
 \frac{\|\rho_n\|_{\infty}}{R^2}\int_0^T\iint |z|^2\rho_n\ud z\ud t \nonumber \leq \ds\frac{C(\eps,T)}{R^2}
\end{eqnarray}
can be made arbitrarily small by choosing $R$ large enough, uniformly with respect to $n\in\N$.
The sequence $\big(\rho_n\big)_{n\in\N}$ thus fulfils the criterion of the Fr\'echet-Weil-Kolmogorov
theorem, see e.~g~\cite[Th. 7.56]{Th} and it is thus  relatively compact in $L^2((0,T)\times\R^2)$.
\qed 

\subsection{Schauder theorem: existence}
Gathering the results of the previous subsections, we can use Schauder's theorem: $\mathcal{C}$ is a closed convex subset of
$L^2((0,T)\times \mathbb{R}^2)$,  $\mathcal{T}$ is a continuous mapping such that $\mathcal{T}(\mathcal{C})\subset\mathcal{C}$ and 
$\mathcal{T}(\mathcal{C})$ is relatively compact in $L^2((0,T)\times \mathbb{R}^2)$. Then $\mathcal{T}$ has a fixed point,
which is a solution of the nonlinear regularized PDE  \eqref{reg_pde}--\eqref{approxF} with initial condition $\rho^{(\eps)}_0$, on any arbitrary time interval $[0,T]$.
The obtained solution lies in $C([0,T];L^2(\R^2))\cap L^2(0,T;H^1(\R^2))$.
\qed

\subsection{Uniqueness}\label{Uniq}

The argument to justify uniqueness relies on the following claim, for which we refer the reader to  \cite[Lemma 7.1.1]{Hale} or \cite[Th.~3.1]{Dix}.

\begin{lemma}[Singular Gr\"onwall Lemma] \label{sing_G} 
Let $A,B\geq 0$, $0\leq \alpha<1$. Let $u(t)$ a locally bounded function such that
\[
u(t) \leq A +B\int_0^t\frac{u(s)}{(t-s)^\alpha}\ud s
\]
then we have 
\[
u(t) \leq A
E_{1-\alpha}\Big(B \Gamma(1-\alpha)t^{1-\alpha}\Big)
\]
with $s\mapsto \Gamma(s)$ the usual $\Gamma-$function and $E_{1-\alpha}$ stands for the Mittag--Leffler function with parameter $\beta=1-\alpha$
\[E_\beta=\ds\sum_{n=0}^\infty \ds\frac{s^n}{\Gamma(n\beta+1)}.\] 
\end{lemma}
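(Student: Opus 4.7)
The plan is to iterate the integral inequality and identify the resulting series with the Mittag--Leffler function. Introduce the non-negative linear operator
\[
Kf(t) = B\int_0^t (t-s)^{-\alpha} f(s)\ud s
\]
acting on locally bounded functions. Since the kernel $(t-s)^{-\alpha}$ is non-negative, $K$ is monotone on the cone of non-negative functions, and the hypothesis rewrites as $u \leq A + Ku$. Plugging this bound into itself $N$ times (which is legal because each $K^k$ preserves the inequality) gives, for every $N\in\N$,
\[
u(t) \leq A\sum_{k=0}^{N} K^k \mathbf{1}(t) + K^{N+1} u(t),
\]
where $\mathbf{1}$ denotes the constant function equal to $1$.

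The core computation is the explicit formula for $K^n\mathbf{1}$. Using the substitution $s=t\sigma$, each iterated integral reduces to a Beta integral:
\[
\int_0^t (t-s)^{-\alpha} s^{\gamma-1}\ud s = t^{\gamma-\alpha}\,\frac{\Gamma(1-\alpha)\,\Gamma(\gamma)}{\Gamma(\gamma+1-\alpha)}.
\]
An elementary induction on $n$, applied with $\gamma = n(1-\alpha)+1$, then yields
\[
K^n \mathbf{1}(t) = \frac{\Gamma(1-\alpha)^n}{\Gamma(n(1-\alpha)+1)}\,\bigl(B\,t^{1-\alpha}\bigr)^n.
\]
Summing over $k$ reproduces exactly the series defining $E_{1-\alpha}\bigl(B\Gamma(1-\alpha) t^{1-\alpha}\bigr)$, so the leading sum converges to $A\,E_{1-\alpha}\bigl(B\Gamma(1-\alpha) t^{1-\alpha}\bigr)$.

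It remains to discard the remainder $K^{N+1}u$. Fix $T>0$ and set $M=\sup_{[0,T]}|u|$, which is finite by hypothesis. The monotonicity of $K$ gives $|K^{N+1}u(t)| \leq M\,K^{N+1}\mathbf{1}(t)$ on $[0,T]$, and the right-hand side is precisely the $(N+1)$-th general term of the convergent Mittag--Leffler series above, hence tends to $0$ as $N\to\infty$ uniformly on $[0,T]$. Passing to the limit in the iterated inequality delivers the claimed bound. The only mildly technical point is keeping track of the Beta-function identity and verifying that the entire series converges for any finite argument; neither presents any real difficulty, so once the iteration scheme is set up the argument is essentially mechanical.
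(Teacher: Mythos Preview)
Your proof is correct. The iteration scheme, the Beta-integral computation of $K^n\mathbf 1$, and the vanishing of the remainder are all carried out properly; the resulting series is exactly the Mittag--Leffler function in the statement.

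By way of comparison: the paper does not actually prove this lemma. It simply records the statement and refers the reader to \cite{Hale} (Lemma~7.1.1) and \cite{Dix} (Theorem~3.1). Your argument is therefore strictly more than what the paper supplies, and it is in fact the standard proof found in those references: iterate the Volterra inequality, evaluate the iterated kernels via Euler's Beta function, recognise the Mittag--Leffler series, and use local boundedness of $u$ to kill the tail. One cosmetic remark: in the induction step the value of $\gamma$ that enters the Beta identity when passing from $K^{n-1}\mathbf 1$ to $K^n\mathbf 1$ is $(n-1)(1-\alpha)+1$ rather than $n(1-\alpha)+1$, but this is only a labelling slip and does not affect the argument.
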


Let $\rho_1$ and $\rho_2$ be two solutions of the regularized nonlinear PDE.
Let \begin{equation}\label{heat}
H_t(z)=\ds\frac{1}{4\pi D t}e^{-|z|^2/(4D t)}\end{equation} stand for the two-dimensional  heat kernel with coefficient $D$.
We write
\[\begin{array}{lll}\ds
(\rho_1-\rho_2)(t) &=&\ds H_t\star(\rho_1-\rho_2)(0) - \int_0^t H_{t-s}\star \nabla 
\cdot (\vec F^{(\eps)}[\rho_2]\rho_2-\vec F^{(\eps)}[\rho_1]\rho_1)(s) \ud s\\
&=&\ds  H_t\star(\rho_1-\rho_2)(0) + \int_0^t \nabla H_{t-s}\star  
 (\vec F^{(\eps)}[\rho_2]\rho_2-\vec F^{(\eps)}[\rho_1]\rho_1)(s) \ud s.
\end{array}\]
Initially we have $\rho_2(0)=\rho_1(0)$ and (with $C_0=\frac1\pi \iint |z|e^{-|z|^2}\ud z$) we arrive at 
\begin{eqnarray}
\ds\iint |\rho_1-\rho_2|(z,t) \ud z &\leq &\ds \int_0^t \iint |\nabla H_{t-s}(z-z')|\ |\vec F^{(\eps)}[\rho_2]\rho_2-\vec F^{(\eps)}[\rho_1]\rho_1|(z',s) \ud z'\ud z\ud s \nonumber \\
&\leq &\ds \int_0^t\frac{C_0}{\sqrt{t-s}}\iint |\vec F^{(\eps)}[\rho_2]|\ |(\rho_2-\rho_1))|(z',s) \ud z'\ud s\nonumber \\
&& +\ds \int_0^t\frac{C_0}{\sqrt{t-s}}\int |\vec F^{(\eps)}[\rho_2]-\vec F^{(\eps)}[\rho_1]|\ \rho_1(z',s)|\ud z'\ud s.
\label{eq:unicityL1}
\end{eqnarray}
Lemma \ref{lemma:estF}-i) together with the mass conservation tell us that 
\[
\|\vec F^{(\eps)}[\rho_2]\|_{L^\infty}\leq 
2 M_0 \|\delta^{(\eps)}\|_{L^\infty}  \|\sgne\|_{L^\infty}
.\]
We also have
\begin{eqnarray}
\iint \rho_1(z,t) |\vec F^{(\eps)}[\rho_2]-\vec F^{(\eps)}[\rho_1]|(z,t) \ud z
 &\leq&
 2\|\delta^{(\eps)}\|_{L^\infty}  \|\sgne\|_{L^\infty}
  \iint
  \rho_1(z,t)\ud z \ds\iint |\rho_1-\rho_2|(z',t)\ud z' \nonumber 
  \\&\leq&
  2M_0\|\delta^{(\eps)}\|_{L^\infty}  \|\sgne\|_{L^\infty}\nonumber 
 \ds\iint |\rho_1-\rho_2|(z',t)\ud z'.
\end{eqnarray}
Introducing this into \eqref{eq:unicityL1} yields, for a certain constant  $B>0$:
\begin{eqnarray}
\|\rho_1-\rho_2\|_{L^1}(t) &\leq & B\int_0^t \frac{1}{\sqrt{t-s}} \|\rho_1-\rho_2\|_{L^1}(s)ds
\end{eqnarray}
The singular Gr\"onwall lemma allows us to conclude that $\rho_1=\rho_2$.\qed

\section{Convergence of $\rho^{(\eps)}$}
\label{sec:convergence}

We can now state our main result about the existence of solutions for \eqref{pde}--\eqref{ci}, which is 
expressed as a stability result.

\begin{theorem}\label{main}
Let $\rho^{(\eps)}_0$ be a sequence of non negative functions bounded in $L^1(\R^2)$ and in $L^\infty(\R^2)$.
We suppose that $\iint \rho^{(\eps)}_0\ud y\ud x\leq M_0$ and  $D>2C_2M_0$ (see \eqref{DcritL2}).
Then,  up to a subsequence,  the associated sequence $\big(\rho^{(\eps)} \big)_{\eps>0}$ 
converges strongly in $L^p((0,T)\times \R^2)$ for any $1\leq p<\infty$,
and in $C([0,T];L^p(\R^2)-\textrm{weak})$,
to $\rho$, which is a solution of 
\eqref{pde}--\eqref{def_f} with initial data $\rho_0$, the weak limit of $\rho^{(\eps)}_0$.
\end{theorem}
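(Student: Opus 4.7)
The strategy is a compactness argument followed by identification of the limit of the nonlinear flux. First I would collect the uniform bounds. Proposition~\ref{lemma:rho_eps_estimates} gives, under the hypothesis $D>2C_2M_0$, that $\big(\rho^{(\eps)}\big)_{\eps>0}$ is bounded in $L^\infty(0,T;L^1(\R^2)\cap L^\infty(\R^2))$ and that $(x^2+y^2)\rho^{(\eps)}$ is bounded in $L^\infty(0,T;L^1(\R^2))$. Specializing the $L^2$-dissipation identity \eqref{Lpdiss}, which adapts to the regularized equation through \eqref{eq:divepsi}, yields a uniform bound on $\nabla\rho^{(\eps)}$ in $L^2((0,T)\times\R^2)$. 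Since $|\sgne|\le 1$ and $\int\delta^{(\eps)}=1$, the regularized force satisfies the pointwise estimate $|\vec F^{(\eps)}[\rho^{(\eps)}](x,y,t)|\leq(\delta^{(\eps)}\star_y\sigma^{(\eps)})(y,t)$, where $\sigma^{(\eps)}(y,t)=\int\rho^{(\eps)}(x',y,t)\ud x'$; combined with the moment and $L^\infty$ bounds, this produces a uniform control of the flux $\vec F^{(\eps)}[\rho^{(\eps)}]\rho^{(\eps)}$ in $L^2((0,T)\times\R^2)$, and returning to \eqref{reg_pde} gives a uniform bound on $\partial_t\rho^{(\eps)}$ in $L^2(0,T;H^{-1}(\R^2))$.

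Second, I would apply the Aubin--Simon lemma along $H^1(B(0,R))\subset\subset L^2(B(0,R))\subset H^{-1}(B(0,R))$ to obtain relative compactness of $\big(\rho^{(\eps)}\big)$ in $L^2((0,T)\times B(0,R))$ for every $R>0$. As in Section~\ref{sec:Comp}, the uniform second-moment bound supplies tightness and the Fr\'echet--Kolmogorov criterion upgrades the local compactness to the whole space: along a subsequence, $\rho^{(\eps)}\to\rho$ strongly in $L^2((0,T)\times\R^2)$. Interpolating with the uniform $L^1$ and $L^\infty$ bounds extends the strong convergence to $L^p((0,T)\times\R^2)$ for every $1\le p<\infty$. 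The bound on $\partial_t\rho^{(\eps)}$ in $L^2(0,T;H^{-1}(\R^2))$ combined with the weak-$L^p$ boundedness produces equicontinuity in time in the weak topology, and an Arzel\`a--Ascoli argument yields convergence in $C([0,T];L^p(\R^2)\text{-weak})$.

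Third, I would pass to the limit in the weak formulation tested against $\varphi\in C^\infty_c([0,T)\times\R^2)$. The linear contributions and the initial data pass by the above. For the nonlinear flux I use the linearity of $g\mapsto\vec F^{(\eps)}[g]$ to decompose
\[
\vec F^{(\eps)}[\rho^{(\eps)}]\rho^{(\eps)}-\vec F[\rho]\rho
=\vec F^{(\eps)}[\rho^{(\eps)}-\rho]\,\rho^{(\eps)}
+\big(\vec F^{(\eps)}[\rho]-\vec F[\rho]\big)\rho^{(\eps)}
+\vec F[\rho]\big(\rho^{(\eps)}-\rho\big).
\]
The first term tends to zero after integration because $g\mapsto\vec F^{(\eps)}[g]$ maps $L^2_{loc}$ into $L^2_{loc}$ uniformly in $\eps$ (as $|\sgne|\le 1$ and $\delta^{(\eps)}$ is a probability density), while $\rho^{(\eps)}\to\rho$ strongly in $L^2$; the second vanishes by standard mollifier convergence applied to the fixed function $\sgn\star_x\rho$ together with the uniform $L^p$ bound on $\rho^{(\eps)}$; the third is immediate from strong convergence of $\rho^{(\eps)}$ combined with $\vec F[\rho]\in L^\infty$ (which follows from $\rho\in L^1\cap L^\infty$).

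The principal obstacle is precisely this last step: as emphasised in the introduction, the kernel defining $\vec F$ is more singular than the Keller--Segel one, so weak convergence of $\rho^{(\eps)}$ would be insufficient to identify the limit of the product $\vec F^{(\eps)}[\rho^{(\eps)}]\rho^{(\eps)}$. The whole argument therefore hinges on the strong $L^2$ compactness produced in the second step, which itself rests on the smallness hypothesis $D>2C_2M_0$ that triggers $L^2$-dissipation.
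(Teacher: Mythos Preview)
Your overall strategy---uniform a priori bounds, Aubin--Simon compactness, tightness from moments, then identification of the nonlinear limit---is exactly the paper's. The gaps lie in the function spaces you assign to the force.

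First, the assertion that $\vec F^{(\eps)}[\rho^{(\eps)}]\rho^{(\eps)}$ is bounded in $L^2((0,T)\times\R^2)$ is not justified by the estimates you cite. Your pointwise bound $|F_x^{(\eps)}[\rho^{(\eps)}]|\le(\delta^{(\eps)}\star_y\sigma^{(\eps)})(y)$ with $\sigma^{(\eps)}(y)=\int\rho^{(\eps)}(x',y)\,\ud x'$ is correct, but from $\rho^{(\eps)}\in L^1\cap L^\infty$ together with a second moment you only get $\sigma^{(\eps)}\in L^1_y\cap L^2_y$, not $L^\infty_y$; this is not enough to put the product $F_x^{(\eps)}\rho^{(\eps)}$ in $L^2$ uniformly in $\eps$. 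The paper accordingly only claims $F_x^{(\eps)}[\rho^{(\eps)}]\in L^\infty((0,T)\times\R_x;L^1(\R_y))$, whence the flux lies in $L^\infty(0,T;L^1(B(0,R)))$ and $\partial_t\rho^{(\eps)}$ is bounded merely in $L^2(0,T;H^{-1-\delta}(B(0,R)))$ for any $\delta>0$. Aubin--Simon still applies, so the compactness conclusion is unchanged, but your stated $H^{-1}$ bound is unsupported.

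Second, and more seriously, the claim ``$\vec F[\rho]\in L^\infty$ (which follows from $\rho\in L^1\cap L^\infty$)'' is false. Since $|F_x[\rho](x,y)|\le\int\rho(x',y)\,\ud x'=\sigma(y)$, and the marginal $\sigma$ of an $L^1\cap L^\infty$ function with finite second moment need not be bounded (take e.g.\ $\rho=\mathbf 1_{|x|\le y^{-1/4},\,0<y<1}$), you cannot dispose of the term $\vec F[\rho](\rho^{(\eps)}-\rho)$ this way. Likewise, the map $g\mapsto\vec F^{(\eps)}[g]$ does not act naturally on $L^2_{\mathrm{loc}}$: $\sgne\notin L^2(\R)$, so the $x'$-integral is not controlled by $\|g(\cdot,y')\|_{L^2_x}$ without support restrictions, and $\rho^{(\eps)}-\rho$ is not compactly supported. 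The paper's remedy is to work in the anisotropic space $L^\infty(\R_x;L^1(\R_y))$: it proves that $F_x^{(\eps)}:L^1(\R^2)\to L^\infty(\R_x;L^1(\R_y))$ is continuous uniformly in $\eps$, and shows $\int|F_x^{(\eps)}[\rho]-F_x[\rho]|(x,y)\,\ud y\to 0$ for each $x$. Combined with the $L^\infty$ bound on $\rho^{(\eps)}$ and the compact support of the test function, this is what makes the product $\vec F^{(\eps)}[\rho^{(\eps)}]\rho^{(\eps)}$ pass to the limit against $\nabla\varphi$. You should replace your $L^2_{\mathrm{loc}}$/$L^\infty$ claims for the force by this $L^1\to L^\infty_xL^1_y$ framework.
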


\subsection{Compactness approach}

We remind the reader that we are assuming  $D>2C_2 M_0$.
Accordingly, from Proposition~\ref{lemma:rho_eps_estimates}, we already know that  $\big(\rho^{(\eps)}\big)_{\eps>0}$
 is bounded in $L^\infty((0,T); L^p(\mathbb{R}^2))$, for any $1\leq p\leq \infty$, and $\nabla \rho^{(\eps)}$ is bounded in $L^2((0,T)\times \mathbb{R}^2)$.
Moreover, the equation
\[
\partial_t \rho^{(\eps)} = \nabla \cdot\left(D\nabla \rho^{(\eps)} -\vec F^{(\eps)}[\rho^{(\eps)}]\rho^{(\eps)} \right) 
\]
tells us that $\partial_t \rho^{(\eps)}$ is the space derivative of the sum of a term bounded in  $L^2((0,T)\times \R^2))$ and the divergence of a term bounded in 
$L^\infty(0,T;L^1(B(0,R)))$ for any $0<R<\infty$.
Indeed, we readily  check that \begin{eqnarray}
 \ds\iint_{B(0,R) } |F^{(\eps)}_x[\rho^{(\eps)}](x,y,t)|\ud y\ud x
  &\leq& \iint_{\sqrt{x^2+y^2}\leq R}\iint \delta^{(\eps)}(y-y')\rho^{(\eps)}(x',y',t) \ud x'\ud y'\ud y \ud x \nonumber \\
&\leq& \ds\int_{-R}^{+R}\ud x \ds\iint \left(\ds\int \delta^{(\eps)}(y-y')\ud y\right) \rho^{(\eps)}(x',y',t) \ud x'\ud y' \\
&\leq & 2RM_0.
\end{eqnarray}
In fact it turns out that $ F^{(\eps)}_x[\rho^{(\eps)}]$, like $ F_x[\rho]$,
is bounded in $L^\infty((0,T)\times \mathbb R;L^1(\mathbb R))$.
Hence,  $\partial_t \rho^{(\eps)}$ is bounded in, say, $L^2(0,T;H^{-1-\delta}(B(0,R)))$ for any $0<R<\infty$ and $\delta >0$.
We can apply the Aubin--Simon lemma \cite{JS}
and we conclude that $\big(\rho^{(\eps)}\big)_{\eps>0}$ is relatively compact in $L^2((0,T)\times B(0,R))$ for any $0<T,R<\infty$.
By using the moments estimate, and  reasoning 
as we did in Section~\ref{sec:Comp}, we show that $\rho^{(\eps)}$ is actually relatively compact in $L^2((0,T)\times \mathbb{R}^2)$. 

Therefore, possibly at the price of extracting a subsequence (still labelled by $\eps$, though) we can assume that 
$$\rho^{(\eps)} \to \rho \textrm{ strongly  in $L^2((0,T)\times \mathbb{R}^2)$}.$$
The convergence can be strengthened in two directions.
First of all, if $1<p=\theta 2+(1-\theta)<2$, the H\"older inequality leads to   $\|\rho^{(\eps)}-\rho\|_{L^p((0,T)\times\R^2)}
\leq (2M_0)^{1-\theta}\| \rho^{(\eps)}-\rho\|^\theta_{L^2((0,T)\times\R^2)}$ and if $2<p<\infty$, we have
$\|\rho^{(\eps)}-\rho\|_{L^p((0,T)\times\R^2)}
\leq (\|\rho^{(\eps)}\|_{L^\infty}+ \|\rho\|_{L^\infty})^{(p-2)/p} \| \rho^{(\eps)}-\rho\|^{2/p}_{L^2((0,T)\times\R^2)}$.
We can also treat the case $p=1$ 
since the $L^2$ estimate and the moment estimate imply that $\big(\rho^{(\eps)}\big)_{\eps>0}$ is weakly compact in $L^1((0,T)\times\R^2)$ 
and
we can assume that it converges a.e., see \cite[Th.~7.60]{Th}.
Finally we get
\begin{equation}\label{cv}
\rho^{(\eps)} \to \rho \textrm{ strongly  in $L^p((0,T)\times \mathbb{R}^2)$ for any $1\leq p<\infty$}.\end{equation}
Second of all, the bound on $\partial_t \rho^{(\eps)}$ can be used to justify, by using the Arzela--Ascoli 
theorem and a diagonal extraction, that 
\[
\lim_{\eps\to 0}
\ds\iint \rho^{(\eps)}(x,y,t)\phi(x,y)\ud y\ud x= \ds\iint \rho(x,y,t)\phi(x,y)\ud y\ud x
\]
holds for any $\phi\in C(\R^2)$, or in $L^{p'}(\R^2)$, uniformly on $[0,T]$.
In particular, the initial data 
passes to the limit and \eqref{ci} makes sense (with $\rho_0$ the weak limit in $L^p(\R^2)$ of the extracted sequence $\big(\rho^{(\eps)}_0\big)_{\eps>0}$).

We are left with the task of passing to the limit in the non--linear term $\vec F^{(\eps)}[\rho^{(\eps)}]\rho^{(\eps)}$. To this end, we  split as follows
$$\vec F^{(\eps)}[\rho^{(\eps)}]-\vec F[\rho]=\vec F^{(\eps)}[\rho^{(\eps)}-\rho]+(\vec F^{(\eps)}[\rho]-\vec F[\rho]).$$
The first term tends to 0
as a consequence of \eqref{cv} combined with the following claim.

\begin{lemma}
The operator $ F^{(\eps)}_x $ (resp. $F^{(\eps)}_y $) is, uniformly with respect to $\eps$, continuous from $L^1(\R^2)$ to $L^\infty(\R_x;L^1(\R_y))$
(resp. $L^\infty(\R_y;L^1(\R_x))$).
\end{lemma}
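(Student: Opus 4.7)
The plan is to exploit the linearity of $F^{(\eps)}_x$ and reduce the continuity statement to a single uniform operator--norm bound. Since $F^{(\eps)}_x[g_1]-F^{(\eps)}_x[g_2]=F^{(\eps)}_x[g_1-g_2]$, it is enough to prove that
\[
\sup_{x\in\R}\int_\R \bigl|F^{(\eps)}_x[g](x,y)\bigr|\,\ud y \ \leq \ C\,\|g\|_{L^1(\R^2)}
\]
with a constant $C$ that does not depend on $\eps$; in fact $C=1$ will suffice.

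The key observation is that the mixed norm $L^\infty_x L^1_y$ is precisely matched to the anisotropy of the kernel $(x,y)\mapsto \sgne(x)\delta^{(\eps)}(y)$: the $x$--factor is only controlled in $L^\infty$, whereas the $y$--factor $\delta^{(\eps)}$ is a probability density. First I would use $\|\sgne\|_{L^\infty}\leq 1$ pointwise to eliminate the dependence on $x$ inside the absolute value, writing
\[
\bigl|F^{(\eps)}_x[g](x,y)\bigr|\leq \iint \delta^{(\eps)}(y-y')\,|g(x',y')|\,\ud x'\,\ud y'.
\]
Then I would integrate in $y$, apply Fubini, and use the normalization $\int_\R \delta^{(\eps)}(y-y')\,\ud y=1$ to conclude
\[
\int_\R \bigl|F^{(\eps)}_x[g](x,y)\bigr|\,\ud y\ \leq\ \iint |g(x',y')|\,\ud x'\,\ud y'\ =\ \|g\|_{L^1(\R^2)},
\]
and taking the supremum in $x$ yields the claimed bound with $C=1$. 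The statement for $F^{(\eps)}_y$ follows by exchanging the roles of $x$ and $y$.

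There is no serious obstacle here: the argument is a direct Fubini computation, and uniformity in $\eps$ is automatic because both $\|\sgne\|_{L^\infty}=1$ and $\|\delta^{(\eps)}\|_{L^1}=1$ hold independently of $\eps$. What is worth emphasizing is why the mixed--norm formulation is essential: a plain $L^\infty$ bound on $F^{(\eps)}_x$ in terms of $\|g\|_{L^1}$ would necessarily involve $\|\delta^{(\eps)}\|_{L^\infty}\sim \eps^{-1}$, as already seen in Lemma~\ref{lemma:estF}-i), and would therefore fail to be uniform in $\eps$. The anisotropic space absorbs the $L^\infty$ bound on $\sgne$ in the $x$--variable and the $L^1$ (probability) normalization of $\delta^{(\eps)}$ in the $y$--variable, which is exactly what makes the estimate $\eps$--free.
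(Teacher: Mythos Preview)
Your proof is correct and is essentially identical to the paper's: bound $|\sgne|\leq 1$ pointwise, integrate in $y$, swap the order of integration by Fubini, and use $\int\delta^{(\eps)}(y-y')\,\ud y=1$ to obtain the $\eps$-free bound $\|g\|_{L^1}$. The additional commentary you give on linearity and on why the mixed norm is the right target is sound but not needed for the argument itself.
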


\noindent
{\bf Proof.}
For any $\phi\in L^1(\R^2)$, we have
\begin{eqnarray}
\int |F_x^{(\eps)}[\phi](x,y)| \ud y &\leq & \int \left( \iint \delta^{(\eps)}(y-y') |\phi(x',y')| \ud x'\ud y'\right) \ud y \nonumber \\
 &\leq & \iint   \left( \int \delta^{(\eps)}(y-y')\ud y\right) 
 |\phi(x',y')| \ud x'\ud y'
= \|\phi\|_{L^1}.\nonumber
\end{eqnarray}
\qed

It remains to investigate, for $\phi\in L^1(\R^2)$, 
the behavior of 
\[\begin{array}{lll}
\big|F^{(\eps)}_x[\phi]-F_x[\phi]\big|(x,y)
&=&
\big|\ds\iint \delta^{(\eps)}(y-y')\sgne(x-x') \phi(x',y')\ud y'\ud x'\\&&
-\ds\int  \left(\ds\int \delta^{(\eps)}(y-y')\ud y'\right) \sgn(x-x') \phi(x',y)\ud x'\big|
\\
&\leq&
\ds\iint \delta^{(\eps)}(y-y')
\big|\sgne(x-x') \phi(x',y')-\sgn(x-x') \phi(x',y)
\big|\ud y'\ud x'.
\end{array}\]
We integrate 
with respect to $y$ and, bearing in mind that $\delta^{(\eps)}(y)=\frac1\eps \delta(y/\eps)$ with $\delta$ the normalized Gaussian, we use the change of variable $y-y'=\eps \xi$; it yields
\[\begin{array}{lll}
\ds\int \big|F^{(\eps)}_x[\phi]-F_x[\phi]\big|(x,y)\ud y
&\leq &
\ds\iiint \delta(\xi ) \big| \sgne(x-x') \phi(x',y-\eps \xi )- \sgn(x-x') \phi(x',y)
\big|\ud \xi \ud x'\ud y
\\
&\leq &
\ds\iiint \delta(\xi ) \big|  \phi(x',y-\eps \xi )-  \phi(x',y)
\big|\ud \xi \ud x'\ud y
\\&&
+
\ds\iiint \delta(\xi ) \phi(x',y) \big| \sgne(x-x')- \sgn(x-x') 
\big|\ud \xi \ud x'\ud y.
\end{array}\]
On the right hand side, the first integral recasts as 
\[
\ds\int \delta (\xi)\left(
\ds\iint \big|  \phi(x',y-\eps \xi )-  \phi(x',y)\big|\ud x'\ud y,
\right)\ud \xi
\]
which tends to 0 as $\eps\to 0$ by combining the Lebesgue dominated convergence theorem with the continuity of translation in $L^1$, \cite[Cor. 4.14]{Th}.
The second integral reads
\[
\ds\int \delta (\xi)\ud \xi\times \ds\iint
 \phi(x',y) \big| \sgne(x-x')- \sgn(x-x') 
\big|\ud x'\ud y
\]
The function $(x,x')\mapsto |\sgne(x-x')-\sgn(x-x')|$ tends  to $0$ pointwise and it is dominated by  $2$.
Since  $\rho\in L^1(\mathbb R^2)$, 
a direct application of the Lebesgue dominated convergence theorem tells us that this quantity tends to 0 as $\eps\to 0$, for any given $x\in \R$.
Similar reasoning obviously apply 
to the  second component of $\vec F$.
Finally, for any test function $\varphi\in C^\infty_c(\mathbb R^2)$,
we obtain 
$$\ds\lim_{\eps\to 0}
\ds\iint \varphi \left(F^{(\eps)}[\rho^{(\eps)}]\rho^{(\eps)}-F[\rho]\rho\right)\ud y\ud x=0.$$ 
Therefore  $\rho$ satisfies, in a weak sense, the limit  equation
\eqref{pde}--\eqref{def_f}. This completes the proof of Theorem~\ref{main}.\qed

\subsection{Symmetric solutions}

Throughout this Section, we work with data that satisfy the following symmetry condition
\begin{equation}\label{sym}
\rho_0(-x,y)=\rho_0(x,y)=\rho_0(x,-y).
\end{equation}
It will be used to derive further estimates and a stronger convergence result of the regularized solution $\rho^{(\eps)}$ towards the solutions of 
\eqref{pde}--\eqref{ci}.
Using the uniqueness property of the solution of the regularized equation \eqref{reg_pde}--\eqref{approxF}, we deduce that the  symmetry property is preserved by the solutions of \eqref{pde}. 
%\textcolor{blue}{Should we be more precise here?}. Done !
Accordingly, we get
\[
F_x[\rho] (0,y,t) =-\int \sgn(x') \rho(x',y,t)\ud x' =0,\qquad F_y[\rho](x,0,t) = 0.
\]
However, we know that $\partial_x F_x[\rho]<0$ and $\partial_y F_y[\rho]<0$. 
Thus, $x\mapsto F_x[\rho](x,y,t)$ is non increasing and it vanishes for $x=0$, so that it has the sign of $(-x)$.
We deduce that $$(x,y)\cdot \vec F[\rho](x,y,t)
=xF_x[\rho] (x,y,t)+yF_y[\rho] (x,y,t) \leq 0.$$
A similar property hold with the solutions $\rho^{(\eps)}$ of the regularized problem and the force operator $\vec F^{(\eps)}[\rho^{(\eps)}]$.
This will be used to obtain a strengthened control on the behavior 
of the solutions for large $x,y$'s: exponential moments and
weighted estimates on the gradients.
These estimates will be combined with the interpretation of \eqref{pde} as a perturbation of the heat equation. Namely, still with $H_t$ the heat kernel
\eqref{heat}, we shall make use of the Duhamel formula
\begin{equation}\label{Duhamel}
\rho(x,y,t)=H_t\star\rho_0(x,y)- \ds\int_0^t H_{t-s}\star \nabla\cdot \big(\vec F[\rho]\rho(s,\cdot)\big)(x,y)\ud s,\end{equation}
and the analogous formula with $\rho^{(\eps)}$.

\subsubsection{Strengthened estimates for symmetric solutions}
At first, the symmetry property allows us to control exponential moments. 
\begin{lemma}[Exponential moments]
Assume that $\rho_0$ satisfies \eqref{sym} and
$$
\ds\iint e^{\lambda\sqrt{1+x^2+y^2}}\rho_0(x,y)\ud y\ud x=\mathcal E_0(\lambda)<+\infty$$
for some $\lambda>0$. Then, the solutions of \eqref{pde}--\eqref{ci} satisfy
\[
\iint e^{\lambda\sqrt{1+x^2+y^2}}\rho(x,y,t) \ud y\ud x\leq \mathcal E_0(\lambda)  e^{D(\lambda^2+2\lambda)t}.
\]
The same estimate holds replacing $\rho$ by $\rho^{(\eps)}$.
\end{lemma}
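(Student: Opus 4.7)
I will test the PDE against the exponential weight $\phi(x,y)=e^{\lambda r}$ with $r=\sqrt{1+x^2+y^2}$, differentiate $\iint \phi\,\rho\,\ud y\,\ud x$ in time, integrate by parts twice in the diffusion term and once in the drift term, and then use the symmetry-induced sign condition $(x,y)\cdot \vec F[\rho]\leq 0$ together with a pointwise bound $\Delta\phi\leq (\lambda^2+2\lambda)\phi$ to close a Grönwall inequality. Since $\rho$ arises (up to extraction) as the strong $L^1$ limit of $\rho^{(\eps)}$ by Theorem~\ref{main}, and since the sign condition $(x,y)\cdot \vec F^{(\eps)}[\rho^{(\eps)}]\leq 0$ is already noted in the text, it suffices to carry out the estimate on $\rho^{(\eps)}$, pass to the limit via Fatou, and observe the bound is $\eps$-independent.

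\textbf{Key computations.}  A direct calculation gives $\nabla\phi=\lambda\frac{(x,y)}{r}\phi$ and
\[
\Delta \phi=\Bigl(\lambda^{2}+\frac{\lambda}{r}+\frac{\lambda}{r^{3}}-\frac{\lambda^{2}}{r^{2}}\Bigr)\phi.
\]
Since $r\geq 1$, we read off $\Delta\phi\leq(\lambda^{2}+2\lambda)\phi$ pointwise. Formally integrating the equation against $\phi$ and integrating by parts,
\[
\frac{\ud}{\ud t}\iint \phi\,\rho^{(\eps)}\,\ud y\,\ud x = D\iint \Delta\phi\,\rho^{(\eps)}\,\ud y\,\ud x + \iint \nabla\phi\cdot \vec F^{(\eps)}[\rho^{(\eps)}]\,\rho^{(\eps)}\,\ud y\,\ud x.
\]
The drift contribution is $\lambda\iint \tfrac{1}{r}\bigl(xF_x^{(\eps)}+yF_y^{(\eps)}\bigr)\phi\,\rho^{(\eps)}\,\ud y\,\ud x$, and is nonpositive by the symmetry argument recalled just above the statement. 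Combined with $\Delta\phi\leq(\lambda^{2}+2\lambda)\phi$, this yields
\[
\frac{\ud}{\ud t}\iint \phi\,\rho^{(\eps)}\,\ud y\,\ud x \leq D(\lambda^{2}+2\lambda)\iint \phi\,\rho^{(\eps)}\,\ud y\,\ud x,
\]
and Grönwall closes the estimate.

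\textbf{Main obstacle and how I would handle it.}  The delicate point is the justification of the two integrations by parts: we need a priori finiteness and decay of $\iint \phi\,\rho^{(\eps)}(t)\,\ud y\,\ud x$ and of the boundary terms. Since $\phi$ grows exponentially, this is not granted by Proposition~\ref{lemma:rho_eps_estimates} alone. I would circumvent this by a truncation: replace $\phi$ by $\phi_n(x,y)=e^{\lambda r}\chi(|z|/n)$ with $\chi\in C_c^\infty([0,\infty))$ a smooth cutoff equal to $1$ on $[0,1]$. The computation above goes through with an additional error term supported in an annulus that is controlled by the boundedness of $\vec F^{(\eps)}$ (Lemma~\ref{lemma:estF}\,i) and Lemma~\ref{lemma:estrho}). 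To show this error vanishes as $n\to\infty$, I would first derive, by the very same scheme applied to $e^{\lambda r}\wedge N$ in place of $\phi$ (and then sending $N\to\infty$ with Grönwall), a weaker a priori bound showing that the exponential moment stays locally bounded in time; the truncation error then vanishes by dominated convergence. One then sends $n\to\infty$ to recover the clean estimate, uniformly in $\eps$, and passes $\eps\to 0$ by Fatou applied to the (a.e.-convergent subsequence of) $\rho^{(\eps)}$ from Theorem~\ref{main}.
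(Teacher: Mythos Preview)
Your argument is correct and follows exactly the paper's route: test the equation against $e^{\lambda r}$, use the pointwise bound $\Delta(e^{\lambda r})\leq(\lambda^{2}+2\lambda)e^{\lambda r}$ (which the paper states without writing out, and which you verify explicitly), discard the drift term via the symmetry-induced sign $(x,y)\cdot\vec F\leq 0$, and apply Gr\"onwall. Your additional discussion of truncation and Fatou simply makes rigorous what the paper treats as an a priori estimate; the paper's own proof is three lines and does not address these justifications.
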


\noindent
{\bf Proof.} 
By using integration by parts, we get
\[\begin{array}{lll}
\ds
\frac{\ud}{\ud t} \iint e^{\lambda\sqrt{1+x^2+y^2}}\rho(x,y,t) \ud y\ud x & \leq& 
\ds D(\lambda^2+2\lambda)\iint e^{\lambda\sqrt{1+x^2+y^2}}\rho(x,y,t) \ud y\ud x\\
 &&\ds+\iint \lambda e^{\lambda\sqrt{1+x^2+y^2}}\ \frac{(x,y)\cdot{ \vec F}[\rho]}{\sqrt{1+x^2+y^2}} \rho(x,y,t)\ud y\ud x.
\end{array}\]
As consequence of the symmetry assumption, the last term contributes negatively. We end the proof by integrating with respect to time. \qed

Using $L^q$ and moments estimates, we can readily obtain a weighted $L^2$ bound; for instance, we have 
\begin{eqnarray}
\iint e^{\lambda\sqrt{1+x^2+y^2}}\rho^2(x,y,t)\ud y\ud x &\leq& \left(\iint e^{2\lambda\sqrt{1+x^2+y^2}}\rho(x,y,t)\ud y\ud x\right)^{1/2}\left(\iint \rho^3\ud y\ud x\right)^{1/2} \nonumber %\\
%&\leq & C e^{D(4\lambda^2+4\lambda)t} \nonumber
\end{eqnarray} 
and a similar estimate holds for $\rho^{(\eps)}$. According to Proposition~\ref{apriori}-i) \& iii)  and \ref{lemma:rho_eps_estimates}-i) \& iii), it becomes a relevant estimate for $D$ large enough:
when \eqref{DcritL2} holds we have bounds in $L^1(\R^2)\cap L^\infty(\R^2) $, thus on $L^3(\R^2)$.
We finally arrive at 
\begin{equation}\label{mtexpcarre}
\ds\iint e^{\lambda\sqrt{1+x^2+y^2}}\rho^2(x,y,t)\ud y\ud x \leq
C e^{2D\lambda(1+\lambda)t}
\end{equation}
where the constant $C$ depends on $D$, $\mathcal E_0(2\lambda)$,  $\|\rho_0\|_{L1} $ and $\|\rho_0\|_{L^\infty}$.
Again, the same (uniform) estimate is fulfilled by $\rho^{(\eps)}$.
\qed
%Il me semble que ce sont bien ces exposants car on prend la racine carre a cause de Cauchy-Schwarz
%
%
%%We would like the same weighted $L^2$ estimates as in the unregularized case; I think the same computations essentially work
%

We need now to specify the class of initial data to which the analysis applies.
Addtionally to the symmetry assumption, we suppose that $\rho_0^{(\eps)}$, which is a regularization of $\rho_0$ in \eqref{ci}, is such that
\begin{equation}\label{hypci}
\begin{array}{l}
\textrm{there exists $p_0,p_2\geq 0$ such that for any $\lambda\geq 0$, we have 
}
\\
\hspace*{4cm}
\mathcal E_0(\lambda)=\ds\sup_{\eps>0}\left(\ds\iint e^{\lambda\sqrt{1+x^2+y^2}}\rho^{(\eps)}_0(x,y)\ud y\ud x\right)\leq e^{p_0+p_2\lambda^2}.
\end{array}
\end{equation}
Such an assumption clearly holds for uniformly compactly supported data, as well as for Gaussian--like data.
Finally, for our purpose, we will need another estimate for the weighted $L^2$ norm of the gradient, which applies for the data verifying  \eqref{hypci}.

 \begin{lemma}[Weighted $L^2$ estimates]\label{west}
 Let 
 $\big(\rho^{(\varepsilon)}_0\big)_{\eps>0}$ be a sequence of non negative functions  bounded in $L^1\cap L^\infty(\R^2)$.
Assume that $\rho^{(\eps)}_0$ satisfies \eqref{sym} and 
\[\ds\sup_{\eps>0}\left\{\int |z|^4\rho^{(\eps)}_0 \ud z + \iint (1+|z|^2)|\rho^{(\eps)}_0|^2 \ud z\right\}<\infty.\]
If $D$ satisfies \eqref{DcritL2}, there exists constants  $B_0, B_1, B_2>0$, which do not depend on $\eps$ 
nor on $t$, such that
\[\begin{array}{l}\ds
\iint (1+|z|^2)(\rho^{(\eps)})^2 \ud z \leq B_0+B_1t
+B_2 t^2,
\\
\ds\int_0^t\left(\iint (1+|z|^2)|\nabla \rho^{(\eps)}(z,s)|^2\right) \ud s\leq B_0+B_1t
+B_2 t^2.
\end{array}\]
\end{lemma}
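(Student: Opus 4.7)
\noindent\textbf{Proof plan for Lemma~\ref{west}.}
The plan is to derive a differential inequality for $Y(t):=\iint(1+|z|^2)(\rho^{(\eps)})^2\,\ud z$, with weight $w(z)=1+|z|^2$, by multiplying the regularized equation~\eqref{reg_pde} by $2w\rho^{(\eps)}$ and integrating. Two integrations by parts in the diffusive term, using $\nabla w=2z$ and $\Delta w=4$, produce $-2D\iint w|\nabla\rho^{(\eps)}|^2\,\ud z+4D\iint(\rho^{(\eps)})^2\,\ud z$. For the drift term $-2\iint w\rho^{(\eps)}\nabla\cdot(\vec F^{(\eps)}\rho^{(\eps)})\,\ud z$, integrating by parts once and then rewriting $w\rho^{(\eps)}\vec F^{(\eps)}\cdot\nabla\rho^{(\eps)}=\tfrac12 w\vec F^{(\eps)}\cdot\nabla((\rho^{(\eps)})^2)$ and integrating by parts once more, together with the identity $\nabla\cdot\vec F^{(\eps)}[\rho^{(\eps)}]=-4T^{(\eps)}(\rho^{(\eps)})$ established in Section~2, yields the energy identity
\[
\frac{\ud}{\ud t}\iint w(\rho^{(\eps)})^2\,\ud z+2D\iint w|\nabla\rho^{(\eps)}|^2\,\ud z=4D\iint(\rho^{(\eps)})^2\,\ud z+4\iint w\,T^{(\eps)}(\rho^{(\eps)})(\rho^{(\eps)})^2\,\ud z+2\iint z\cdot\vec F^{(\eps)}[\rho^{(\eps)}](\rho^{(\eps)})^2\,\ud z.
\]

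Next, I bound each term on the right. The symmetry assumption~\eqref{sym} is preserved by~\eqref{reg_pde} (uniqueness in Section~\ref{Uniq}), and the argument given just before the lemma shows that $z\cdot\vec F^{(\eps)}[\rho^{(\eps)}]\leq 0$ pointwise; hence the last term is non-positive and is discarded. The first term is controlled by $4D\|\rho^{(\eps)}_0\|_{L^2}^2$ since Proposition~\ref{lemma:rho_eps_estimates}-iii) gives that $\|\rho^{(\eps)}(t)\|_{L^2}$ is non-increasing under~\eqref{DcritL2}. For the convolution term, using that $T^{(\eps)}$ is a contraction on $L^\infty$ and that $\|\rho^{(\eps)}\|_{L^\infty}$ is bounded uniformly in $\eps$ and $t$ (again Proposition~\ref{lemma:rho_eps_estimates}-iii)),
\[
\iint w\,T^{(\eps)}(\rho^{(\eps)})(\rho^{(\eps)})^2\,\ud z\leq \|\rho^{(\eps)}\|_{L^\infty}^2\iint w\,T^{(\eps)}(\rho^{(\eps)})\,\ud z.
\]
A Fubini computation, exploiting that $\int G^{(\eps)}=1$, $\int u\,G^{(\eps)}(u)\,\ud u=0$ and $\int|u|^2 G^{(\eps)}(u)\,\ud u=2\eps^2$, gives $\iint w\,T^{(\eps)}(\rho^{(\eps)})\,\ud z=(1+2\eps^2)M_0+m_2^{(\eps)}(t)$. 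The moment estimate~\eqref{mtk}, whose derivation extends verbatim to $\rho^{(\eps)}$ (using the oddness of $\sgn^{(\eps)}$ and the evenness of $\delta^{(\eps)}$), yields $m_2^{(\eps)}(t)\leq m_2^{(\eps)}(0)+2DM_0 t$, which is linear in $t$ with constants uniform in $\eps$.

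Combining these bounds gives $Y'(t)+2D\iint w|\nabla\rho^{(\eps)}|^2\,\ud z\leq C_1+C_2 t$, with $C_1,C_2$ depending only on $D$, $M_0$, and the uniform bounds on $\rho^{(\eps)}_0$ in $L^1\cap L^\infty$ and on $m_2^{(\eps)}(0)$. Integrating from $0$ to $t$ yields both announced estimates, with $B_0=\iint w|\rho^{(\eps)}_0|^2\,\ud z$ (finite by hypothesis), $B_1=C_1$, $B_2=C_2/2$. The \emph{main obstacle} is keeping the growth polynomial rather than exponential: a crude application of Grönwall's lemma to $Y$ would produce an exponential factor coming from the coefficient $\rho^{(\eps)}$ that multiplies $Y$ in the drift term. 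It is precisely the symmetry hypothesis~\eqref{sym} that discards this bad $z\cdot\vec F^{(\eps)}$ contribution, leaving on the right only quantities that are either uniformly bounded in time ($\|\rho^{(\eps)}\|_{L^\infty}$ and $\|\rho^{(\eps)}\|_{L^2}$) or grow at most linearly in $t$ (namely $m_2^{(\eps)}$). A secondary technical point is the rigorous justification of the integration by parts against the unbounded weight $w$, which is handled by multiplying by a smooth cutoff $\chi_R(|z|)$ and letting $R\to\infty$, the convergence being controlled via the $\iint|z|^4\rho^{(\eps)}_0\,\ud z<\infty$ hypothesis together with Proposition~\ref{lemma:rho_eps_estimates}-iv) applied to fourth-order moments.
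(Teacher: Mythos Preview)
Your proof is correct and shares the same skeleton as the paper's: you derive the same weighted energy identity, you discard the $\iint z\cdot\vec F^{(\eps)}[\rho^{(\eps)}](\rho^{(\eps)})^2\,\ud z$ term via the symmetry assumption, and you integrate in time. The difference lies in how the cubic term $\iint(1+|z|^2)\,T^{(\eps)}(\rho^{(\eps)})(\rho^{(\eps)})^2\,\ud z$ is bounded.

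The paper uses Cauchy--Schwarz to split it as
\[
\Big(\iint(1+|z|^2)^2\rho^{(\eps)}\,\ud z\Big)^{1/2}\Big(\iint |T^{(\eps)}(\rho^{(\eps)})|^2(\rho^{(\eps)})^3\,\ud z\Big)^{1/2}
\ \leq\ \Big(2\iint(1+|z|^4)\rho^{(\eps)}\,\ud z\Big)^{1/2}\Big(\iint(\rho^{(\eps)})^5\,\ud z\Big)^{1/2},
\]
and then invokes the uniform $L^5$ bound together with the recursion~\eqref{mtk} to control $m_4^{(\eps)}(t)$ quadratically in $t$. This is precisely where the hypothesis $\sup_\eps\iint|z|^4\rho^{(\eps)}_0\,\ud z<\infty$ enters the paper's argument. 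You instead pull out $\|\rho^{(\eps)}\|_{L^\infty}^2$ and reduce to $\iint(1+|z|^2)T^{(\eps)}(\rho^{(\eps)})\,\ud z=(1+2\eps^2)M_0+m_2^{(\eps)}(t)$ via Fubini, so only the \emph{second} moment (with its linear growth) is needed. Both routes produce a right-hand side of the form $C_1+C_2 t$ in the differential inequality, hence the same quadratic bound after integration; your argument is more elementary and, at the level of the estimate itself, does not actually consume the fourth-moment hypothesis. As you correctly remark, that hypothesis is still useful to justify rigorously the integrations by parts against the unbounded weight, so the statement is not weakened.
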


\noindent
{\bf Proof.} Let us compute (still with the shorthand notation $z=(x,y)$), by using several integrations by parts,
\begin{eqnarray}
\ds\frac12\frac{\ud}{\ud t} \iint(1+|z|^2)|\rho^{(\eps)}|^2(z,t)\ud z &=&-D\ds\iint (1+|z|^2)|\nabla\rho^{(\eps)}|^2\ud z +2D \iint |\rho^{(\eps)}|^2\ud z 
\nonumber \\
&&\ds
+\iint |\rho^{(\eps)}|^2 z\cdot F^{(\eps)}[ \rho^{(\eps)}] \ud z 
 \nonumber\\
 &&- \ds\frac 12 \iint(1+|z|^2)
| \rho^{(\eps)}|^2 \ \nabla\cdot( \vec F^{(\eps)} \rho^{(\eps)})\ud z. \nonumber 
\end{eqnarray}
The symmetry assumption implies  $z\cdot F^{(\eps)}[ \rho^{(\eps)}]\leq 0$, which allows us to get rid of the third term in the right side.
We remind the reader that $\nabla\cdot( \vec F^{(\eps)} \rho^{(\eps)})=-4T^{(\eps)}( \rho^{(\eps)})$ is proportional to  the convolution with
an approximation of  the 2D Dirac measure.
Hence, we get
\[
\begin{array}{l}
\ds\frac12\frac{\ud}{\ud t} \iint(1+|z|^2)|\rho^{(\eps)}|^2(z,t)\ud z +D\ds\iint (1+|z|^2)|\nabla\rho^{(\eps)}|^2\ud z
\\
\qquad
\leq
2D \ds\iint |\rho^{(\eps)}|^2\ud z + 2\ds \iint(1+|z|^2)
| \rho^{(\eps)}|^2\  T^{(\eps)}( \rho^{(\eps)})\ud z
\\
\qquad \leq \ds
 2D\iint|\rho^{(\eps)}|^2\ud z
  +2\left(\iint (1+|z|^2)^2\rho^{(\eps)} \ud z \right)^{1/2}\left(\iint \big|T^{(\eps)}(\rho^{(\eps)})\big|^2\ \big|\rho^{(\eps)}\big|^3\ud z\right)^{1/2} \nonumber \\
\qquad\ds\leq  2D\iint|\rho^{(\eps)}|^2\ud z +2\left(2\iint (1+|z|^4)\rho^{(\eps)} \ud z \right)^{1/2}
\left(\iint |\rho^{(\eps)}|^5 \ud z\right)^{1/2} .\nonumber 
\end{array}\]
For the last term, we have used 
H\"older's inequality
as in the proof of Proposition~\ref{lemma:rho_eps_estimates}.
We already know that the $L^2$ and $L^5 $ norms of $\rho^{(\eps)}$ are uniformly bounded, by virtue of Proposition~\ref{lemma:rho_eps_estimates}.
It remains to discuss the forth order moment. To this end we go back to \eqref{mtk}: $t\mapsto m_2(t) $ has a linear growth, hence $t\mapsto m_4(t)$ have a quadratic growth  
with respect to the time variable.
We conclude that both 
\[
\iint(1+|z|^2)|\rho^{(\eps)}|^2(z,t)\ud z \qquad \textrm{and} \qquad \ds\int_0^t\ds\iint (1+|z|^2)|\nabla\rho^{(\eps)}(z,s)|^2\ud z\ud s
\]
has at most a quadratic growth, with coefficients independent of $\eps$.
\qed

\subsubsection{Cauchy property for $\rho^{(\eps)}$}
\label{sec:cauchy}

This Section is concerned with the following statement, 
which strengthens Theorem~\ref{main} for symmetric solutions.

\begin{theorem}\label{main2}
%Let $\big(\rho^{(\varepsilon)}_0\big)_{\eps>0}$ be a sequence that fulfils the 
%the assumptions of Lemma~\ref{west} 
 Let 
 $\big(\rho^{(\varepsilon)}_0\big)_{\eps>0}$ be a sequence of non negative functions  bounded in $L^1\cap L^\infty(\R^2)$,
 which 
 satisfies \eqref{sym} and \eqref{hypci}
and which converges in $L^1(\R^2)$ to some $\rho_0$.
Then the associated sequence $\big(\rho^{(\varepsilon)}\big)_{\eps>0}$ of solutions of \eqref{reg_pde}--\eqref{approxF}
is a Cauchy sequence in $C([0,T];L^1(\mathbb{R}^2))$.
\end{theorem}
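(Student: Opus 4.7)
The strategy is to adapt the Duhamel--singular Gronwall argument of Section~\ref{Uniq} to the difference $\phi^{\eps,\eps'}:=\rho^{(\eps)}-\rho^{(\eps')}$ between two regularized solutions, the crucial new feature being that all constants must now be uniform in the small parameters $\eps,\eps'$ (whereas in Section~\ref{Uniq} they were allowed to blow up with $\eps$). Writing \eqref{Duhamel} for both $\rho^{(\eps)}$ and $\rho^{(\eps')}$ and subtracting yields
\begin{equation*}
\phi^{\eps,\eps'}(t)= H_t\star(\rho^{(\eps)}_0-\rho^{(\eps')}_0) + \int_0^t \nabla H_{t-s}\star \Psi^{\eps,\eps'}(s)\,\ud s,
\end{equation*}
with the splitting
\begin{equation*}
\Psi^{\eps,\eps'} = \vec F^{(\eps)}[\rho^{(\eps)}]\,\phi^{\eps,\eps'} + \vec F^{(\eps)}[\phi^{\eps,\eps'}]\,\rho^{(\eps')} + \big(\vec F^{(\eps)}-\vec F^{(\eps')}\big)[\rho^{(\eps')}]\,\rho^{(\eps')}.
\end{equation*}
Taking $L^1(\R^2)$ norms and using $\|\nabla H_{t-s}\|_{L^1}\leq C/\sqrt{t-s}$ produces a singular Volterra inequality
\begin{equation*}
\|\phi^{\eps,\eps'}(t)\|_{L^1}\leq \|\rho^{(\eps)}_0-\rho^{(\eps')}_0\|_{L^1}+\eta(\eps,\eps';T)+\int_0^t\frac{B(s)\,\|\phi^{\eps,\eps'}(s)\|_{L^1}}{\sqrt{t-s}}\,\ud s,
\end{equation*}
to which Lemma~\ref{sing_G}, in a mildly generalized form allowing $B(\cdot)\in L^p(0,T)$ for some $p>2$ (obtained either by iterating the inequality or by interpolating the fractional integral), applies as soon as one can ensure that the error source $\eta(\eps,\eps';T)$ tends to $0$.

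The two linear-in-$\phi^{\eps,\eps'}$ contributions, which produce the coefficient $B(s)$, are controlled uniformly through the pointwise bound
\begin{equation*}
|F^{(\eps)}_x[g](x,y)|\leq (\delta^{(\eps)}\star_y m^g)(y)\leq \|m^g\|_{L^\infty(\R_y)},\qquad m^g(y):=\int g(x',y)\,\ud x',
\end{equation*}
valid because $|\sgne|\leq 1$ and $\int\delta^{(\eps)}=1$; crucially this estimate involves no $\eps$-blowing-up constant. Combining Cauchy--Schwarz in $x'$ with weight $(1+x'^2)^{-1}$ with the 1D Sobolev embedding $H^1(\R)\hookrightarrow L^\infty(\R)$, the weighted $L^2$ estimates of Lemma~\ref{west} then deliver $\|m^{\rho^{(\eps)}}\|_{L^\infty(\R)}\in L^2(0,T)$ uniformly in $\eps$, which takes care of $\|\vec F^{(\eps)}[\rho^{(\eps)}]\|_{L^\infty}$. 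A Fubini exchange in the second term provides $\|\vec F^{(\eps)}[\phi^{\eps,\eps'}]\rho^{(\eps')}\|_{L^1}\leq \|m^{\rho^{(\eps')}}\|_{L^\infty(\R)}\,\|\phi^{\eps,\eps'}\|_{L^1}$, using the uniform $L^\infty$ bound on $\rho^{(\eps')}$ from Proposition~\ref{lemma:rho_eps_estimates}-iii).

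The error term $\eta(\eps,\eps';T)$ collects the contribution of $(\vec F^{(\eps)}-\vec F^{(\eps')})[\rho^{(\eps')}]\,\rho^{(\eps')}$. It is estimated exactly as in the last computation of the compactness approach of Section~\ref{sec:convergence}: the change of variables $y-y'=\eps\xi$ (resp. $\eps'\xi$) reduces the convergence to the continuity of translation in $L^1$ and to the pointwise convergence $\sgne\to\sgn$, $\delta^{(\eps)}\to\delta$. The uniform $L^1\cap L^\infty$ bound on $\rho^{(\eps')}$ together with the exponential moment bound---whose propagation rests on the symmetry assumption \eqref{sym} and the decay assumption \eqref{hypci}---furnish the uniform integrability required to pass to the limit uniformly on $[0,T]$.

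The principal obstacle is the uniform-in-$\eps$ control of the coefficient $B$: every bound in Lemma~\ref{lemma:estF} introduces powers of $\|\delta^{(\eps)}\|_{L^\infty}$ or $\|\delta^{(\eps)}\|_{L^2}$ that diverge as $\eps\to 0$, so they must all be discarded in favour of controls on the marginals $m^{\rho^{(\eps)}}$. Those controls are only available because symmetry is propagated, which yields $(x,y)\cdot\vec F[\rho^{(\eps)}]\leq 0$ and hence the weighted $L^2$ gradient estimates of Lemma~\ref{west}; without this symmetry-based propagation, $B$ cannot be made uniform, which is precisely why the hypotheses \eqref{sym} and \eqref{hypci} are imposed in Theorem~\ref{main2}.
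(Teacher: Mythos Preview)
Your overall architecture—Duhamel formula for $\phi^{\eps,\eps'}$, three-term splitting of the flux difference, singular Gronwall—matches the paper. Your treatment of the two linear-in-$\phi^{\eps,\eps'}$ terms is, however, genuinely different from the paper's and worth highlighting: the paper does \emph{not} succeed in bounding $\|\vec F^{(\eps)}[\rho^{(\eps)}]\|_{L^\infty}$ uniformly in $\eps$. Instead it truncates at radius $R$, obtaining a Gronwall coefficient $\mathcal B(R)\sim R$ on the ball and a remainder $\widetilde{\mathcal A}(R,\lambda)\sim e^{-\alpha\lambda R}$ outside, controlled by the exponential moments of \eqref{hypci}. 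After applying Lemma~\ref{sing_G}, the Mittag--Leffler factor grows like $e^{cR^2T}$, and an optimisation in $(R,\lambda)$—which only closes on short time intervals, then iterates—is needed. Your route through $|F^{(\eps)}_x[\rho^{(\eps)}]|\le \|m^{\rho^{(\eps)}}\|_{L^\infty_y}$ and Lemma~\ref{west} is cleaner (and in fact yields $B\in L^4(0,T)$, via $\|m\|_\infty^2\le 2\|m\|_2\|\partial_y m\|_2$, which is what you need for the $p>2$ singular Gronwall; your ``$L^2$'' is an understatement). This approach would even dispense with \eqref{hypci}.

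The gap is in your treatment of the source term $\eta$, i.e.\ the contribution of $(\vec F^{(\eps)}-\vec F^{(\eps')})[\rho^{(\eps')}]\rho^{(\eps')}$. The computation at the end of Section~4.1 that you invoke shows $F^{(\eps)}[\rho]-F[\rho]\to 0$ for a \emph{fixed} $\rho$, via continuity of translations in $L^1$; here $\rho^{(\eps')}$ varies with $\eps'$, so you would need uniform equicontinuity under translation of the family $\{\rho^{(\eps')}(\cdot,s)\}_{\eps',s}$, which is a strong-compactness statement not delivered by $L^1\cap L^\infty$ bounds plus (exponential) moments. Exponential moments control tails, not the local oscillation created by $\delta^{(\eps)}-\delta^{(\eps')}$. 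The paper's Lemma~\ref{l4} handles this term by an integration by parts that converts $\delta^{(\eps)}-\delta^{(\eps')}$ into $\sgne-\sgn^{(\eps')}$ at the cost of a $\partial_y\rho^{(\eps')}$, then applies Cauchy--Schwarz with weight $(1+x'^2)$ to reach the weighted gradient of Lemma~\ref{west}. Because that weighted gradient is only $L^2$ in time, a further time-splitting $\int_0^{t-\eta}+\int_{t-\eta}^t$ is required to cope with the non-integrability of $(t-s)^{-1/2}$ against an $L^2_t$ quantity. Neither the integration by parts nor the time-splitting is supplied in your sketch, and without them the $\eta$ term does not close.
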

 
 \begin{coro}\label{main2b}
Let $\rho_0\in L^1\cap L^\infty(\R^2)$ verify \eqref{sym} and \eqref{hypci}.
Then the sequence $\big(\rho^{(\eps)}\big)_{\eps>0}$ of solutions of \eqref{reg_pde}--\eqref{approxF} with the same initia data converges $C([0,T];L^1(\mathbb R^2))$
 to $\rho$, the unique   symmetric solution of \eqref{pde}--\eqref{ci}.
\end{coro}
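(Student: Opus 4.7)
The plan is to derive Corollary \ref{main2b} from Theorem \ref{main2} together with Theorem \ref{main}, supplemented by a separate uniqueness argument for symmetric solutions.

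First, I apply Theorem \ref{main2} with the constant sequence $\rho_0^{(\eps)}\equiv\rho_0$. All the hypotheses plainly hold: uniform $L^1\cap L^\infty$ boundedness is automatic, each element satisfies \eqref{sym} and \eqref{hypci}, and $L^1$-convergence to $\rho_0$ is trivial. The theorem produces a Cauchy sequence $(\rho^{(\eps)})_{\eps>0}$ in $C([0,T]; L^1(\R^2))$, and by completeness I obtain a limit $\rho \in C([0,T]; L^1(\R^2))$. I identify $\rho$ as a symmetric solution of \eqref{pde}--\eqref{ci} as follows. By Theorem \ref{main}, some subsequence of $\rho^{(\eps)}$ converges strongly in every $L^p((0,T)\times\R^2)$, $1\le p<\infty$, to a weak solution of \eqref{pde}--\eqref{def_f} with initial datum $\rho_0$. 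Since the strong $C([0,T];L^1)$-convergence of the whole sequence uniquely determines the $L^1$-limit, this weak solution equals $\rho$. Each $\rho^{(\eps)}$ is symmetric (the regularized kernel and $\rho_0$ are symmetric, and uniqueness for the regularized problem, Section \ref{Uniq}, transfers the symmetry), and symmetry is preserved by $L^1$-limits, so $\rho$ is a symmetric solution.

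To prove uniqueness, let $\tilde\rho$ be any other symmetric solution of \eqref{pde}--\eqref{ci} with initial datum $\rho_0$, enjoying the a priori regularity we have established (Proposition \ref{apriori}, the exponential moments, and the weighted $L^2$ control of Lemma \ref{west}). Writing the Duhamel formula \eqref{Duhamel} for $\rho^{(\eps)}$ (with $\vec F$ replaced by $\vec F^{(\eps)}$) and for $\tilde\rho$, subtracting, and decomposing the nonlinearity as
\[
\vec F^{(\eps)}[\rho^{(\eps)}]\rho^{(\eps)} - \vec F[\tilde\rho]\tilde\rho = \vec F^{(\eps)}[\rho^{(\eps)} - \tilde\rho]\,\rho^{(\eps)} + \vec F[\tilde\rho]\,(\rho^{(\eps)} - \tilde\rho) + \bigl(\vec F^{(\eps)} - \vec F\bigr)[\tilde\rho]\,\rho^{(\eps)},
\]
I take $L^1(\R^2)$-norms, use $\|\nabla H_{t-s}\|_{L^1}\le C/\sqrt{t-s}$, and control the first two right-hand terms by $\|\rho^{(\eps)} - \tilde\rho\|_{L^1}$. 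The third term tends to zero as $\eps\to 0$ by the very same dominated-convergence argument already carried out at the end of Section 4.1. This yields an inequality of the form $\|\rho^{(\eps)}(t) - \tilde\rho(t)\|_{L^1} \le \eta(\eps) + B\int_0^t \|\rho^{(\eps)}(s) - \tilde\rho(s)\|_{L^1}/\sqrt{t-s}\,ds$ with $\eta(\eps)\to 0$ uniformly on $[0,T]$. The singular Gronwall Lemma \ref{sing_G} then gives $\|\rho^{(\eps)} - \tilde\rho\|_{C([0,T];L^1)} \to 0$, hence $\tilde\rho = \rho$.

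The principal obstacle is closing the Gronwall loop: neither $\vec F[\tilde\rho]$ nor $\vec F^{(\eps)}[\rho^{(\eps)}]$ is a priori in $L^\infty$, because uniform $L^1\cap L^\infty$ bounds on $\tilde\rho$ do not control the partial marginals $y\mapsto\int\tilde\rho(\cdot,y)\,dx$ in $L^\infty_y$. Here the symmetry is essential: combined with the exponential-moment estimate \eqref{mtexpcarre}, it supplies enough decay, via a splitting $\int\tilde\rho\,dx = \int_{|x|\le R} + \int_{|x|>R}$ and an optimization in $R$, to absorb the marginals into an integrable kernel against $\|\rho^{(\eps)} - \tilde\rho\|_{L^1}$, which is exactly what the singular Gronwall step needs.
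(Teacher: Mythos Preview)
Your overall strategy is correct and closely mirrors the paper's: derive convergence of the full sequence from Theorem~\ref{main2} with the constant initial data, identify the limit as a solution via Theorem~\ref{main}, and prove uniqueness by rerunning the Duhamel/Gr\"onwall machinery of Section~\ref{sec:cauchy}. Two points deserve comment.

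First, the paper's uniqueness argument is slightly more direct than yours. The paper (last sentence of Section~\ref{sec:cauchy}) compares two symmetric solutions $\rho_1,\rho_2$ of the \emph{singular} equation; the decomposition then involves only $B$- and $C$-type terms (Lemmas~\ref{l2}--\ref{l3} specialized to $\eps=\eps'=0$), and no analogue of the $D$-term appears, since both solutions use the same force operator $\vec F$. Your route compares $\rho^{(\eps)}$ to an arbitrary symmetric solution $\tilde\rho$, which reintroduces a $D$-type term $(\vec F^{(\eps)}-\vec F)[\tilde\rho]\,\rho^{(\eps)}$. This is legitimate --- it is precisely what is done in Section~4.2.3 for the convergence rate --- but it is not handled by ``the very same dominated-convergence argument of Section~4.1'': that argument only yields convergence against compactly supported test functions, whereas here you need an $L^1$-type smallness after convolution with $\nabla H_{t-s}$, uniformly on $[0,T]$. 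The correct tool is Lemma~\ref{l4} (with $\eps'=0$), whose proof relies essentially on the weighted gradient estimate of Lemma~\ref{west} that you do list among your assumptions on $\tilde\rho$.

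Second, your displayed inequality $\|\rho^{(\eps)}-\tilde\rho\|_{L^1}\le \eta(\eps)+B\int_0^t(\cdot)/\sqrt{t-s}\,\ud s$ is an oversimplification: as your final paragraph correctly recognizes, the $B$- and $C$-type estimates do not give a fixed constant $B$ but rather a Gr\"onwall constant $\mathcal B(R)$ growing linearly in the cutoff radius $R$, together with an additive remainder $\widetilde{\mathcal A}(R,\lambda)$ that decays in $R$ (cf.~\eqref{est_important}--\eqref{est_important2}). One must apply the singular Gr\"onwall lemma with this $R$-dependence, then optimize over $R,\lambda$ on a short time interval and iterate. Since you already identify this mechanism, the argument closes; just make the dependence on $R$ explicit rather than hiding it behind a single constant.
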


 We make use of \eqref{Duhamel}, which leads to 
\begin{equation}
\begin{array}{lll}
\ds\iint |\rho^{(\eps)}-\rho^{(\eps')}|(z,t)\ud z &\leq &\ds \int H_t\star  |\rho^{(\eps)}_0-\rho^{(\eps')}_0|(z)\ud z 
 \\&& +\ds\int_0^t  \iint  \left|\nabla H_{t-s} \star\left[ \left(\vec F^{(\eps)}[\rho^{(\eps)}]\rho^{(\eps)} 
-\vec F^{(\eps')}[\rho^{(\eps')}]\rho^{(\eps')} \right) (s,\cdot)\right]\right|(z)\ud z \ud s. \label{int112}
\end{array}
\end{equation}

We dominate the right hand side by the sum of  the following four terms
\begin{eqnarray}
A_{\eps,\eps'}(t)&= &\iint_{\mathbb{R}^2} H_t \star |\rho^{(\eps)}_0-\rho^{(\eps')}_0|(z)\ud z,\nonumber
\\
B_{\eps,\eps'}(t)&= & \int_0^t \iiiint |\nabla H_{t-s}(z-z')|\
 |\vec F^{(\eps)}[\rho^{(\eps)}](z',s)| \ 
 |\rho^{(\eps)}-\rho^{(\eps')}|(z',s)\ud z' \ud z\ud s, \nonumber \\
C_{\eps,\eps'}(t)&= &\int_0^t \iiiint |\nabla H_{t-s}(z-z')|\ \rho^{(\eps')}(z',s)
\ |\vec F^{(\eps)}[\rho^{(\eps)} -\rho^{(\eps')}](z',s)|
%|K_{\eps}(z'-z")||\rho^{(\eps)}-\rho^{(\eps')}|(z")
\ud z'\ud z%\ud z"
\ud s, \nonumber \\
D_{\eps,\eps'}(t)&= &\int_0^t \iint \left| \iint  \rho^{(\eps')}(z',s)\  \nabla H_{t-s}(z-z')\cdot  
\Big(\vec F^{(\eps)}[\rho^{(\eps')}]- \vec F^{(\eps')}[\rho^{(\eps')}](z',s)\Big)\ud z'\right| \ud z\ud s.
\nonumber
\end{eqnarray}
Since $\rho^{(\eps)}_0\to \rho_0$ in $L^1(\R^2)$, it is clear that
\begin{equation}\label{Acv}
\ds\lim_{\eps,\eps'\to 0}\left(\ds\sup_{t\geq 0} A_{\eps,\eps'}(t)\right)=0
\end{equation}
uniformly on any time interval $[0,T]$.
Next, we are going to justify the following claim.

\begin{lemma}\label{l2} 
Let 
$\alpha=\frac{1}{\sqrt2}%(-1+\sqrt{5})/2
$. Set  $$\varphi(\lambda)=2D\lambda(1+\alpha)\big(1+\lambda(1+\alpha)\big).$$
Then there exists  constant $\beta_1,\beta_2>0$ such that, for any $R>0$ we have 
\begin{equation}\label{Bcv}
 B_{\eps,\eps'}(t)
\leq    \beta_1 R \|\rho^{(\eps)}\|_{L^\infty}\ds\int_0^t \ds\frac{1}{\sqrt{t-s}} \|(\rho^{(\eps)}-\rho^{(\eps')})(s,\cdot)\|_{L^1}\ud s
  +\beta_2\ds\frac{\sqrt{t}}{\lambda} \ e^{-\alpha \lambda R}e^{\varphi(\lambda)t}.
\end{equation}
The constant $\beta_1$ does not depend on the data, while $\beta_2$ depends on  $\mathcal E_0(2\lambda(1+\alpha))$.
\end{lemma}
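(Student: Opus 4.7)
The plan is to first apply the heat-kernel gradient bound $\|\nabla H_\tau\|_{L^1(\R^2)} = C_0/\sqrt\tau$ (with $C_0 = \frac{1}{\pi}\iint|z|e^{-|z|^2}\ud z$) to integrate out the outer $z$-variable in the definition of $B_{\eps,\eps'}(t)$, reducing it to
$$B_{\eps,\eps'}(t)\le C_0\int_0^t\frac{J(s)}{\sqrt{t-s}}\ud s,\qquad J(s)=\iint|\vec F^{(\eps)}[\rho^{(\eps)}](z',s)|\,|\rho^{(\eps)}-\rho^{(\eps')}|(z',s)\ud z',$$
and then to split $J(s)=J_R(s)+J^c_R(s)$ according to whether $|z'|\le R$ or $|z'|>R$.

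For $J_R$, I would exploit the symmetry assumption \eqref{sym}. Evenness of $\rho^{(\eps)}$ in $x$ gives $F^{(\eps)}_x[\rho^{(\eps)}](0,y)=0$; combined with the identity $\partial_x F^{(\eps)}_x[\rho^{(\eps)}]=-2T^{(\eps)}(\rho^{(\eps)})$ (proof of Proposition~\ref{lemma:rho_eps_estimates}) and $\|T^{(\eps)}(\rho^{(\eps)})\|_{L^\infty}\le\|\rho^{(\eps)}\|_{L^\infty}$, integration of $\partial_x F^{(\eps)}_x$ from $0$ to $x'$ yields the $\eps$-uniform pointwise control $|\vec F^{(\eps)}[\rho^{(\eps)}](z')|\le c\,|z'|\,\|\rho^{(\eps)}\|_{L^\infty}$. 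Restricting to $|z'|\le R$ and factoring out gives $J_R(s)\le cR\|\rho^{(\eps)}\|_{L^\infty}\|\rho^{(\eps)}-\rho^{(\eps')}\|_{L^1}(s)$, which after the $s$-integration produces the first summand of \eqref{Bcv} with $\beta_1$ a pure numerical constant.

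For $J^c_R$, the same pointwise bound applied globally gives $J^c_R(s)\le c\|\rho^{(\eps)}\|_{L^\infty}\iint_{|z'|>R}|z'|(\rho^{(\eps)}+\rho^{(\eps')})\ud z'$. I would then insert $1=e^{\lambda(1+\alpha)\sqrt{1+|z'|^2}}\cdot e^{-\lambda(1+\alpha)\sqrt{1+|z'|^2}}$ and split the decaying exponential as $e^{-\alpha\lambda\sqrt{1+|z'|^2}}\cdot e^{-\lambda\sqrt{1+|z'|^2}}$. On $\{|z'|>R\}$ the first piece is bounded by $e^{-\alpha\lambda R}$ (since $\sqrt{1+|z'|^2}\ge|z'|>R$), while the elementary inequality $te^{-\lambda t}\le (e\lambda)^{-1}$ applied to $t=\sqrt{1+|z'|^2}$ absorbs $|z'|\le\sqrt{1+|z'|^2}$ against the second piece, producing the $1/\lambda$ prefactor. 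The remaining weighted integral $\iint e^{\lambda(1+\alpha)\sqrt{1+|z'|^2}}(\rho^{(\eps)}+\rho^{(\eps')})\ud z'$ is then estimated by a Cauchy--Schwarz step against mass conservation, followed by the exponential moment lemma applied with parameter $2\lambda(1+\alpha)$; the latter produces exponent $D\bigl(4\lambda^2(1+\alpha)^2+4\lambda(1+\alpha)\bigr)s=2\varphi(\lambda)s$ and, after taking the square root, exactly $\sqrt{\mathcal E_0(2\lambda(1+\alpha))}\,e^{\varphi(\lambda)s}$.

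Finally, the monotone estimate $\int_0^t\frac{e^{\varphi(\lambda)s}}{\sqrt{t-s}}\ud s\le 2\sqrt t\,e^{\varphi(\lambda)t}$ delivers the $\sqrt t$-factor of the second summand of \eqref{Bcv}. The main obstacle lies in the far-field bookkeeping: the precise form of $\varphi(\lambda)$ dictates the specific weight $e^{\lambda(1+\alpha)\sqrt{1+|z'|^2}}$, and the $\mathcal E_0(2\lambda(1+\alpha))$ dependence of $\beta_2$ (rather than $\mathcal E_0(\lambda(1+\alpha))$) emerges only through the Cauchy--Schwarz step, which trades a square root against a doubling of the weight parameter.
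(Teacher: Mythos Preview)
Your argument is correct, and it takes a genuinely different route from the paper's. The key divergence is the location of the cut at level $R$: the paper unfolds $|\vec F^{(\eps)}[\rho^{(\eps)}](z')|\,|\rho^{(\eps)}-\rho^{(\eps')}|(z')$ as a quadruple integral over $(z',z_1)$ (with $z_1$ the inner convolution variable of the force) and splits according to $|z_1|\lessgtr R$, whereas you split on the outer variable $|z'|\lessgtr R$ after first deriving the pointwise bound $|\vec F^{(\eps)}[\rho^{(\eps)}](z')|\le c|z'|\,\|\rho^{(\eps)}\|_{L^\infty}$ from the symmetry $F^{(\eps)}_x(0,y)=0$ and the derivative identity $\partial_x F^{(\eps)}_x=-2T^{(\eps)}(\rho^{(\eps)})$.

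For the near field the outcomes are the same (the paper obtains $2R\|\rho^{(\eps)}\|_{L^\infty}\|\rho^{(\eps)}-\rho^{(\eps')}\|_{L^1}$ by a direct size-of-strip argument that does not use symmetry, while you get an equivalent bound via the pointwise force estimate). The far field is where your approach pays off: you only need the \emph{linear} exponential-moment lemma $\iint e^{\mu\sqrt{1+|z|^2}}\rho^{(\eps)}\le \mathcal E_0(\mu)e^{D(\mu^2+2\mu)t}$, whereas the paper routes through the weighted $L^2$ estimate \eqref{mtexpcarre} (which in turn calls on the $L^3$ bound), and then performs a more delicate Cauchy--Schwarz manipulation with cross-weights $e^{\pm\lambda\sqrt{1+|z_1|^2}}e^{\mp\lambda\sqrt{1+|z'|^2}}$ and the inequality $\alpha(|x|+|y|)\le\sqrt{1+x^2+y^2}$ to handle the one-dimensional integrations against $\delta^{(\eps)}$. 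Your path is shorter; in fact, if you had skipped your Cauchy--Schwarz step you would have obtained a strictly better exponent $D\lambda(1+\alpha)(\lambda(1+\alpha)+2)<\varphi(\lambda)$ and a milder dependence on $\mathcal E_0(\lambda(1+\alpha))$---the doubling to $2\lambda(1+\alpha)$ is forced only by the wish to reproduce the paper's stated constants. The one trade-off is that your near-field control genuinely uses the evenness of $\rho^{(\eps)}$ (to get $F^{(\eps)}_x(0,\cdot)=0$), while the paper's near-field step does not; since the lemma lives inside the symmetric-solutions section this is harmless.
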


\noindent
{\bf Proof.} 
In Section \ref{Uniq}, we already used the basic estimate
\begin{equation}\label{estheatk}
\iint |\nabla H_{t-s}(z-z')| \ud z' \leq \frac{C_0}{\sqrt{t-s}}
\end{equation}
for a certain constant $C_0$.
We have
\[\begin{array}{l}\ds
\iint  |F^{(\eps)}[\rho^{(\eps)}](z')| |\rho^{(\eps)}-\rho^{(\eps')}|(z')\ud z' 
\\
\qquad\ds
\leq  \iiiint \rho^{(\eps)}(x_1,y_1)\delta^{(\eps)}(y'-y_1)|\rho^{(\eps)}-\rho^{(\eps')}|(x',y')\ud x'\ud y'\ud x_1\ud y_1 
\\
\qquad\leq 
\ds
\ds\iiiint_{x_1^2+y_1^2\leq R^2}... \ud x'\ud y'\ud x_1\ud y_1+ \ds\iiiint_{x_1^2+y_1^2\geq R^2}... \ud x'\ud y'\ud x_1\ud y_1
.\end{array}
\]
We dominate the first integral as follows
\[\begin{array}{l}
\ds\iiiint_{x_1^2+y_1^2\leq R^2}... \ud x'\ud y'\ud x_1\ud y_1\\
\qquad\qquad\leq 
\|\rho^{(\eps)}\|_{L^\infty} \ds\int_{x_1\leq R} \left(\ds\int \delta^{(\eps)}(y'-y_1)\ud y_1\right)|\rho^{(\eps)}-\rho^{(\eps')}|(x',y') \ud y'\ud x'
\\ \qquad\qquad\leq 
2R\|\rho^{(\eps)}\|_{L^\infty}\|\rho^{(\eps)}-\rho^{(\eps')}\|_{L^1}.\end{array}\]
%NB: THIS IS NOT COMPLETELY CORRECT; I HAVE REPLACED HERE $\delta^{(\eps)}$ BY $\delta$ TO GET AN IDEA OF WHAT'S GOING ON. IT SHOULD BE EASY %TO DO THE REAL THING?\\
Next, we have
\[\begin{array}{l}
\ds\iiiint_{x_1^2+y_1^2\geq R^2}... \ud x'\ud y'\ud x_1\ud y_1
\\
\qquad\qquad
\leq
\ds\iiiint_{x_1^2+y_1^2\geq R^2}
\rho^{(\eps)}(x_1,y_1)\rho^{(\eps)}(x',y') \delta^{(\eps)}(y'-y_1)
\ud x'\ud y'\ud x_1\ud y_1
\\
\qquad\qquad\qquad
+\ds\iiiint_{x_1^2+y_1^2\geq R^2}
\rho^{(\eps)}(x_1,y_1)\rho^{(\eps')}(x',y') \delta^{(\eps)}(y'-y_1)
\ud x'\ud y'\ud x_1\ud y_1
\end{array}\]
where the two terms can be treated with the same approach.
We make the exponential moment appear and we use the Cauchy-Schwarz inequality
to obtain,
for instance,
\begin{equation}\label{toto}
\begin{array}{l}
\ds\iiiint_{x_1^2+y_1^2> R^2} \rho^{(\eps)}(x_1,y_1) \delta^{(\eps)}(y-y_1)\rho^{(\eps)}(x,y)\ud x_1\ud y_1\ud x\ud y
\\
\qquad\leq\ds
 \frac12\ds\iiiint_{x_1^2+y_1^2> R^2}  e^{\lambda\sqrt{1+x_1^2+ y_1^2}}e^{-\lambda\sqrt{1+x^2+y^2}}|\rho^{(\eps)}|^2(x_1,y_1)
 \delta^{(\eps)}(y-y_1)
\ud x_1\ud y_1\ud x\ud y \\
\qquad\qquad + \ds\frac12 \ds\iiiint_{x_1^2+y_1^2> R^2} 
e^{-\lambda\sqrt{1+x_1^2+y_1^2}}e^{\lambda\sqrt{1+x^2+y^2}}|\rho^{(\eps)}|^2(x,y)\delta^{(\eps)}(y-y_1)
\ud x_1\ud y_1\ud x\ud y .
\end{array}\end{equation}
The elementary inequality  $$\alpha(|x|+|y|)\leq \sqrt{1+x^2+y^2}$$ allows us to estimate
\[
\int e^{-\lambda\sqrt{1+x^2+y^2}}\ud x \leq e^{-\alpha\lambda |y|}\frac{2}{\alpha \lambda}
.\]
Hence the first integral in the right hand side of \eqref{toto} is dominated by
\[\begin{array}{l}
\ds\iint_{x_1^2+y_1^2> R^2} 
\left(\ds\int  \delta^{(\eps)}(y-y_1)
 \left(\ds\int e^{-\lambda\sqrt{1+x^2+y^2}} \ud x \right)\ud y
\right)
e^{\lambda\sqrt{1+x_1^2+ y_1^2}} \ |\rho^{(\eps)}|^2(x_1,y_1)
\ud x_1\ud y_1
\\
\qquad\leq 
\ds\frac{2}{\alpha\lambda}
\ds\iint_{x_1^2+y_1^2> R^2} 
\left(\ds\int  \delta^{(\eps)}(y-y_1) e^{-\alpha\lambda |y|}
\ud y
\right)
e^{\lambda\sqrt{1+x_1^2+ y_1^2}} \ |\rho^{(\eps)}|^2(x_1,y_1)
\ud x_1\ud y_1\
\\
\qquad\leq 
\ds\frac{2}{\alpha\lambda}
\ds\iint_{x_1^2+y_1^2> R^2}e^{\lambda\sqrt{1+x_1^2+ y_1^2} }\ |\rho^{(\eps)}|^2(x_1,y_1)\ud x_1\ud y_1
\\
\qquad\leq 
\ds\frac{2}{\alpha\lambda}
\ds\iint_{x_1^2+y_1^2> R^2}e^{-\lambda\alpha\sqrt{1+x_1^2+ y_1^2} }
e^{\lambda(1+\alpha)\sqrt{1+x_1^2+ y_1^2} }\ |\rho^{(\eps)}|^2(x_1,y_1)\ud x_1\ud y_1
\\
\qquad\leq 
\ds\frac{2}{\alpha\lambda} \ e^{-\lambda\alpha R}
\ds\iint
e^{\lambda(1+\alpha)\sqrt{1+x_1^2+ y_1^2} }\ |\rho^{(\eps)}|^2(x_1,y_1)\ud x_1\ud y_1
\\
\qquad\leq C \ 
\ds\frac{2}{\alpha\lambda} \ e^{-\lambda\alpha R}\ 
e^{2D\lambda(1+\alpha)(1+(1+\alpha)\lambda)t}
\end{array}\]
where we have used 
\eqref{mtexpcarre} and the constant $C>0$ depends on $\mathcal E_0(2\lambda(1+\alpha))$.
Next, we observe that
\begin{eqnarray}
\ds\iint \mathbf 1_{x_1^2+y_1^2> R^2} e^{-\lambda\sqrt{1+x_1^2+y_1^2}}\ud x_1
 \leq e^{-\alpha\lambda |y_1|}\int \mathbf 1_{x_1^2+y_1^2> R^2} e^{-\alpha \lambda |x_1|} \ud x_1\nonumber 
\leq  \frac{2}{\alpha \lambda} e^{-\alpha \lambda( |y_1|+ g(y_1))} \nonumber 
%&\leq &  \frac{2}{\alpha \lambda} e^{-\alpha \lambda R} \nonumber
\end{eqnarray}
where $$g(y)=\mathbf 1_{|y|\leq R}\sqrt{R^2-y^2}.$$
As a matter of fact, for any $y\in \R$, 
we have $|y|+g(y)\geq R$, so that 
\[
\int \mathbf 1_{x_1^2+y_1^2> R^2} e^{-\lambda\sqrt{1+x_1^2+y_1^2}}\ud x_1 \leq  \frac{2}{\alpha \lambda} e^{-\alpha \lambda R}
.\]
The second integral of the right hand side in \eqref{toto}, 
is thus dominated by 
\[\begin{array}{l}
\ds\iint
\left(\ds\int \left(\ds\int \mathbf 1_{x_1^2+y_1^2> R^2} e^{-\lambda\sqrt{1+x_1^2+y_1^2}}\ud x_1\right)
\delta^{(\eps)}(y-y_1)
\ud y_1
\right)
e^{\lambda\sqrt{1+x^2+y^2}} |\rho^{(\eps)}|^2(x,y)\ud x\ud y
\\
\qquad
\leq \ds
 \frac{2}{\alpha \lambda} e^{-\alpha \lambda R}
 \ds\iint
\left(\ds\int 
\delta^{(\eps)}(y-y_1)
\ud y_1
\right)
e^{\lambda\sqrt{1+x^2+y^2}} |\rho^{(\eps)}|^2(x,y)\ud x\ud y
\\
\qquad\leq 
\ds C'\ \frac{2}{\alpha \lambda} e^{-\alpha \lambda R}\ e^{2D\lambda(1+ \lambda)t}
\end{array}\]
where we have used \eqref{mtexpcarre} again and $C'$ here depends on $\mathcal E_0(2\lambda)$.
We finally  conclude (note that $\lambda(1+\alpha)>\lambda$) that 
\[\begin{array}{l}
\ds\iint  |F^{(\eps)}[\rho^{(\eps)}](z',s)| |\rho^{(\eps)}-\rho^{(\eps')}|(z',s)\ud z' 
\\
\\\qquad
\leq  2 R \|\rho^{(\eps)}\|_{L^\infty} \|(\rho^{(\eps)}-\rho^{(\eps')})(s)\|_{L^1} +
\ds\frac{C}{\alpha \lambda}\ e^{-\alpha \lambda R}e^{\varphi(\lambda)t}
\nonumber
\end{array}\]
with $C$ depending on $\mathcal E_0(2\lambda(1+\alpha))\geq \mathcal E_0(2\lambda)$.
We combine this inequality  to \eqref{estheatk} to obtain the final estimate on $B_{\eps,\eps'}$.
 \qed

\begin{lemma} \label{l3} There exists  constant $\gamma_1,\gamma_2>0$ such that, for any $R>0$ we have 
\begin{equation}\label{Ccv}
 C_{\eps,\eps'}(t)
\leq   
   \gamma_1 R \|\rho^{(\eps)}\|_{L^\infty}\ds\int_0^t \ds\frac{1}{\sqrt{t-s}} \|(\rho^{(\eps)}-\rho^{(\eps')})(s,\cdot)\|_{L^1}\ud s
  +\gamma_2\ds\frac{\sqrt{t}}{\lambda} \ e^{-\alpha \lambda R}e^{\varphi(\lambda)t}.
\end{equation}
The constant $\gamma_1$ does not depend on the data, while $\gamma_2$ depends on  $\mathcal E_0(2\lambda(1+\alpha))$.
\end{lemma}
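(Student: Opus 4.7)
The structure of $C_{\eps,\eps'}(t)$ differs from that of $B_{\eps,\eps'}(t)$ only by a swap: the factor outside $\vec F^{(\eps)}$ is now $\rho^{(\eps')}$ (instead of $|\rho^{(\eps)}-\rho^{(\eps')}|$) and the factor inside the convolution defining $\vec F^{(\eps)}$ is the difference $\rho^{(\eps)}-\rho^{(\eps')}$ (instead of $\rho^{(\eps)}$). Since $\vec F^{(\eps)}$ is linear in its argument and $|\sgne|\le 1$, I can mirror the proof of Lemma~\ref{l2} almost verbatim after this role reversal. The uniform $L^\infty$ bound of Proposition~\ref{lemma:rho_eps_estimates}-iii) guarantees that $\|\rho^{(\eps')}\|_{L^\infty}$ and $\|\rho^{(\eps)}\|_{L^\infty}$ are controlled by the same constant, which is why the statement can legitimately feature $\|\rho^{(\eps)}\|_{L^\infty}$.

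The first step is to integrate the $z$ variable against the gradient heat kernel via Fubini and \eqref{estheatk}, reducing matters to controlling
\[
\int_0^t \frac{C_0}{\sqrt{t-s}}\iint \rho^{(\eps')}(z',s)\,\bigl|\vec F^{(\eps)}[\rho^{(\eps)}-\rho^{(\eps')}](z',s)\bigr|\ud z'\ud s.
\]
Using $|\sgne|\le 1$, the inner spatial integral is bounded (componentwise on $F^{(\eps)}_x$, and analogously for $F^{(\eps)}_y$) by
\[
\iiiint \rho^{(\eps')}(x,y)\,\delta^{(\eps)}(y-y_1)\,|\rho^{(\eps)}-\rho^{(\eps')}|(x_1,y_1)\ud x_1\ud y_1\ud x\ud y,
\]
and I would split this integral according to whether $x^2+y^2\le R^2$ or $x^2+y^2>R^2$. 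Notice that this is precisely the opposite splitting from Lemma~\ref{l2}, where the cut was made in the $(x_1,y_1)$ variables.

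On the small region $\{x^2+y^2\le R^2\}$, I bound $\rho^{(\eps')}(x,y)\le \|\rho^{(\eps')}\|_{L^\infty}$, integrate $y$ against $\delta^{(\eps)}$ (mass $1$) and $x$ over $[-R,R]$ (length $2R$), leaving $\|\rho^{(\eps)}-\rho^{(\eps')}\|_{L^1}$. This produces the first term of \eqref{Ccv}. On the large region $\{x^2+y^2>R^2\}$, I first dominate $|\rho^{(\eps)}-\rho^{(\eps')}|\le \rho^{(\eps)}+\rho^{(\eps')}$, obtaining two quadratic pieces with $\rho^{(\eps')}$ in the "outer" slot and either $\rho^{(\eps)}$ or $\rho^{(\eps')}$ in the "inner" slot. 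Each piece is then processed exactly as in \eqref{toto}: apply Cauchy--Schwarz with the weight $e^{\pm\lambda\sqrt{1+x^2+y^2}}$, exploit $\alpha(|x|+|y|)\le \sqrt{1+x^2+y^2}$ with $\alpha=1/\sqrt 2$ to separate the $x$ and $y$ exponentials, use $|y_1|+g(y_1)\ge R$ (where $g(y)=\mathbf 1_{|y|\le R}\sqrt{R^2-y^2}$) on the exterior region to extract the factor $e^{-\alpha\lambda R}$, and finally apply the weighted $L^2$ exponential-moment bound \eqref{mtexpcarre} to pick up $e^{\varphi(\lambda)t}$. Integrating $1/\sqrt{t-s}$ against a bounded $s$-function on $[0,t]$ yields the $\sqrt t$ factor.

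The only substantive point of difficulty is the symmetry of the weighted estimate: although the inside and outside roles are swapped relative to Lemma~\ref{l2}, the exponential moments of Proposition~\ref{lemma:rho_eps_estimates} combined with the symmetry assumption \eqref{sym} apply equally to $\rho^{(\eps)}$ and to $\rho^{(\eps')}$ with the same majorant $\mathcal E_0(2\lambda(1+\alpha))$, so the constant $\gamma_2$ can be chosen uniformly in $\eps,\eps'$. The rest is bookkeeping identical to the proof of Lemma~\ref{l2}.
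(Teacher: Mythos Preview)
Your proposal is correct and follows the same approach as the paper. The paper's own proof is even terser: it simply observes that after integrating out $\nabla H_{t-s}$ and bounding $|\sgne|\le 1$, the resulting four-fold integral is, after the dummy relabeling $(x',y')\leftrightarrow(x_1,y_1)$ and the evenness of $\delta^{(\eps)}$, literally the same expression treated in Lemma~\ref{l2} (with $\rho^{(\eps')}$ playing the role of $\rho^{(\eps)}$), so your ``opposite splitting'' is in fact the identical splitting under this change of variables.
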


\noindent
{\bf Proof.}
The same reasoning  applies for $C_{\eps,\eps'}$. Indeed, we can first integrate  $\nabla H_{t-s}(z-z')$ over $z$,
which leads to the analog of \eqref{estheatk}.
Estimating $\vec{F}^{(\eps)}$, we are left with
\[
C_{\eps,\eps'}(t) \leq\ds \int_0^t \frac{C_0}{\sqrt{t-s}} \iint \rho^{(\eps')}(x',y',s) |\rho^{(\eps)}-\rho^{(\eps')}|(x_1,y_1,s))\delta^{(\eps)}(y'-y_1)\ud x_1\ud y_1\ud x'\ud y'\ud s
\]
which is exactly the same expression that appeared in the analysis of  $B_{\eps,\eps'}$.
\qed

\noindent
We turn to the analysis of $D_{\eps,\eps'}$.

\begin{lemma}\label{l4} Let $0<T<\infty$.
Then $
 D_{\eps,\eps'}(t)$ converges to 0, uniformly over $[0,T]$ as $\eps,\eps'$ tend to $0$
\end{lemma}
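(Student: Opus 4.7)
The plan is to structure the proof of Lemma \ref{l4} in parallel with Lemmas \ref{l2} and \ref{l3}: reduce via Fubini and the heat-kernel estimate $\iint|\nabla H_{t-s}(z-z')|\,\ud z\le C_0/\sqrt{t-s}$ (already used in Section~\ref{Uniq}) to
\[
D_{\eps,\eps'}(t) \le C_0\int_0^t \frac{J_{\eps,\eps'}(s)}{\sqrt{t-s}}\,\ud s,\qquad J_{\eps,\eps'}(s) := \iint \rho^{(\eps')}(z',s)\,\bigl|\vec F^{(\eps)}[\rho^{(\eps')}] - \vec F^{(\eps')}[\rho^{(\eps')}]\bigr|(z',s)\,\ud z',
\]
then split $J_{\eps,\eps'}(s)$ at a radius $R$. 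Using the identity $\vec F^{(\eps)} = T^{(\eps)}\vec F$ recorded right after \eqref{approxF}, the force difference rewrites as $(T^{(\eps)}-T^{(\eps')})\vec F[\rho^{(\eps')}]$, i.e.\ the action of a mollifier difference on a fixed input, which is the key structural observation that distinguishes $D_{\eps,\eps'}$ from $B_{\eps,\eps'}$ and $C_{\eps,\eps'}$.

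For the tail $|z'|>R$, proceed exactly as in the proofs of Lemmas \ref{l2} and \ref{l3}: insert the exponential weight $e^{\lambda\sqrt{1+|z'|^2}}$, use the symmetrization/Cauchy--Schwarz trick employed in the proof of Lemma~\ref{l2} to manufacture an $e^{-\alpha\lambda R}$ factor, and bound via the uniform weighted $L^2$ estimate \eqref{mtexpcarre}. This yields a contribution of the form $(\mathrm{const}/\lambda)\,e^{-\alpha\lambda R}\,e^{\varphi(\lambda)s}$, uniform in $\eps,\eps'$ and $s\in[0,T]$. For the body $|z'|\le R$, use the standard Gaussian mollifier bound $\|T^{(\eta)}\phi - \phi\|_{L^2}\le \eta\,\|\nabla\phi\|_{L^2}$ applied to $\phi=\vec F[\rho^{(\eps')}(s,\cdot)]$ on the slightly enlarged ball $B(0,R+\eps+\eps')$. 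Since $\partial_x F_x[\rho]=-2\rho$, the $x$-component of the gradient is uniformly $L^2$-bounded by Proposition~\ref{lemma:rho_eps_estimates}-iii), while the $y$-component $\partial_y F_x[\rho]=-\int \sgn(x-x')\partial_y\rho(x',\cdot)\,\ud x'$ is controlled by the weighted $L^2$ gradient bound of Lemma~\ref{west}. A Cauchy--Schwarz against $\rho^{(\eps')}$ on $B(0,R)$, followed by a further Cauchy--Schwarz in time against $1/\sqrt{t-s}$, absorbs the $L^2_s$ character of the gradient bound and gives a body contribution of order $(\eps+\eps')$ times a constant depending only on $R$ and $T$.

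Taking $R$ large (to bury the tail uniformly in $\eps,\eps',s,t$) and then $\eps,\eps'$ small (to bury the body), one concludes $\sup_{t\in[0,T]}D_{\eps,\eps'}(t)\to 0$. The main obstacle is reconciling the uniformity in both $s\in[0,T]$ \emph{and} $\eps'>0$: the gradient of $\vec F[\rho^{(\eps')}(s,\cdot)]$ is only bounded in $L^2_s$ and not in $L^\infty_s$, so the naive mollifier rate degenerates pointwise in $s$. This is precisely what forces the combined use of Cauchy--Schwarz in time with the weighted gradient estimate of Lemma~\ref{west} for the body, and of the exponential-moment bound \eqref{mtexpcarre} for the tail, to keep all constants uniform in $\eps'$ and thereby close the estimate.
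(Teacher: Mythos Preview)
Your reduction to $J_{\eps,\eps'}(s)$ and the identity $\vec F^{(\eps)}=T^{(\eps)}\vec F$ are fine, and the tail argument via exponential moments does go through as in Lemmas~\ref{l2}--\ref{l3}. The gap is in the body estimate. After Cauchy--Schwarz in $z'$ you are left with
\[
\int_0^t\frac{C(\eps+\eps')}{\sqrt{t-s}}\,G(s)\,\ud s,\qquad G(s)\sim\Bigl(\iint(1+|z|^2)|\nabla\rho^{(\eps')}(z,s)|^2\,\ud z\Bigr)^{1/2},
\]
and you propose a further Cauchy--Schwarz in $s$ against $1/\sqrt{t-s}$. But $1/\sqrt{t-s}\notin L^2(0,t)$: the factor $\bigl(\int_0^t(t-s)^{-1}\ud s\bigr)^{1/2}$ diverges. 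Since Lemma~\ref{west} provides only $G\in L^2(0,T)$ and not $G\in L^\infty(0,T)$, no H\"older pairing with the heat-kernel singularity closes, and the ``body contribution of order $(\eps+\eps')$ times a constant depending only on $R$ and $T$'' that you announce is not established.

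The paper meets exactly this obstruction by a time splitting $\int_0^t=\int_0^{t-\eta}+\int_{t-\eta}^t$. On $(0,t-\eta)$ the Cauchy--Schwarz in time is harmless (it produces $\sqrt{\ln(t/\eta)}$). On $(t-\eta,t)$ one does \emph{not} seek a small factor from the force difference; instead one uses a crude bound on $J_{\eps,\eps'}(s)$ that is uniform in $\eps,\eps',s$ (the paper's $\mathcal J_{\eps,\eps'}$, controlled via Lemma~\ref{west} applied to $\rho^{(\eps')}$ itself, not its gradient), which yields $C_T\sqrt\eta$. Optimizing in $\eta$ then gives the conclusion. Your argument can be repaired along these lines, but the step as written fails. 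A secondary imprecision: $T^{(\eps)}$ is Gaussian with full support, so ``on the slightly enlarged ball $B(0,R+\eps+\eps')$'' is not literally correct; this is salvageable because $\|\nabla\vec F[\rho^{(\eps')}]\|_{L^2(B(0,R'))}$ grows only polynomially in $R'$, but it requires a word of justification.
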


\noindent
{\bf Proof.}
We evaluate $
 D_{\eps,\eps'}$  through the following splitting
\begin{eqnarray}
D_{\eps,\eps'}(t)&\leq &% \int_0^t \int |\iint \nabla H_{t-s}(z-z')[K^{(\eps)}(z'-z")-K^{(\eps')}(z'-z")]\rho^{(\eps')}(z')\rho^{(\eps')}(z")\ud z'\ud z"| \ud z ds \nonumber \\
%&=& 
D_{x,1}(t)+D_{x,2}(t)+D_{y,1}(t)+D_{y,2}(t)\nonumber 
\end{eqnarray}
with 
\begin{eqnarray}
D_{x,1}(t) &=& \int_0^t \iint \left| \iiiint \partial_x H_{t-s}(x-x',y-y')
\rho^{(\eps')}(x',y',s)\rho^{(\eps')}(x",y",s)
\right.\nonumber \\
&&\qquad\qquad \times \left. 
\sgn^{(\eps)}(x'-x")\big(\delta^{(\eps)}(y'-y")-\delta^{(\eps')}(y'-y")\big)
\ud x'\ud y'\ud x"\ud y"\right| \ud x\ud y \nonumber \\
D_{x,2}(t) &=& \int_0^t \iint \left| \iiiint \partial_x H_{t-s}(x-x',y-y) 
\rho^{(\eps')}(x',y',s)\rho^{(\eps')}(x",y",s) 
\right.\nonumber \\
&&\qquad\qquad \times \left.
\delta^{(\eps')}(y'-y")\big(\sgn^{(\eps)}(x'-x")-\sgn^{(\eps')}(x'-x")\big)
\ud x'\ud y'\ud x"\ud y"\right| \ud x\ud y\nonumber \\
D_{y,1}(t) &=& \int_0^t \iint \left| \iiiint \partial_x H_{t-s}(x-x',y-y')
\rho^{(\eps')}(x',y',s)\rho^{(\eps')}(x",y",s)
\right.\nonumber \\
&&\qquad\qquad \times \left. 
\sgn^{(\eps)}(y'-y")\big(\delta^{(\eps)}(x'-x")-\delta^{(\eps')}(x'-x")\big)
\ud x'\ud y'\ud x"\ud y"\right| \ud x\ud y \nonumber \\
D_{y,2}(t) &=& \int_0^t \iint \left| \iiiint \partial_y H_{t-s}(x-x',y-y) 
\rho^{(\eps')}(x',y',s)\rho^{(\eps')}(x",y",s) 
\right.\nonumber \\
&&\qquad\qquad \times \left.
\delta^{(\eps')}(x'-x")\big(\sgn^{(\eps)}(y'-y")-\sgn^{(\eps')}(y'-y")\big)
\ud x'\ud y'\ud x"\ud y"\right| \ud x\ud y.\nonumber 
\end{eqnarray}
In order to study $D_{x,1}$,
we make use of the following quantity
\[\ds
\iint \partial_x H_{t-s}(x-x',y-y')
\rho^{(\eps')}(x',y',s) \mathcal I_{\eps,\eps'}(x',y',s)
\ud x'\ud y'\]
with
\[\mathcal I_{\eps,\eps'}(x',y',s)=
\ds\iint
\rho^{(\eps')}(x",y",s)
\sgn^{(\eps)}(x'-x")\big(\delta^{(\eps)}(y'-y")-\delta^{(\eps')}(y'-y")\big)
\ud x"\ud y".\]
Since $\delta^{(\eps)}(u)=\frac12\frac{\ud}{\ud u}\sgn^{(\eps)}(u)$, the latter can be rewritten
by integrating by parts
\[\mathcal I_{\eps,\eps'}(x',y',s)=
\ds\frac12\ds\iint
\partial_y\rho^{(\eps')}(x",y",s)
\sgn^{(\eps)}(x'-x")\big(\sgn^{(\eps)}(y'-y")-\sgn^{(\eps')}(y'-y")\big)
\ud x"\ud y".\]
The 
Cauchy-Schwarz inequality yields
\[\begin{array}{lll}
|\mathcal I_{\eps,\eps'}(x',y',s)|&\leq&
\ds\frac12\left(\ds\iint (1+|x"|^2)
\big|\partial_y\rho^{(\eps')}(x",y",s)\big|^2\ud x"\ud y"\right)^{1/2}
\\
&&\qquad\times
\left(\ds\int \ds\frac{\ud x"}{1+|x"|^2}\ds\int
\big|\sgn^{(\eps)}(y'-y")-\sgn^{(\eps')}(y'-y")\big|^2\ud y"\right)^{1/2}
\\
&\leq & 
\ds\frac{\sqrt\pi}{2}\left(\ds\iint (1+|z"|^2)
|\nabla\rho^{(\eps')}(z",s)|^2\ud z"\right)^{1/2}\sqrt{\Delta_{\eps,\eps'}(y')},
\end{array}\]
where 
\[\begin{array}{lll}\Delta_{\eps,\eps'}(y')&=&\ds\int
\big|\sgn^{(\eps)}(y'-y")-\sgn^{(\eps')}(y'-y")\big|^2\ud y"
\\
&=&
\ds\frac2\pi
\ds\int\Big| \ds\int_0^{y'-y"} e^{-\frac{v^2}{2\eps^2}}\ds\frac{\ud v}{\eps}-  \ds\int_0^{y'-y"} e^{-\frac{v^2}{2|\eps'|^2}}\ds\frac{\ud v}{\eps'} \Big|^2\ud y"
\\
&=&
\ds\frac2\pi
\ds\int\Big| \ds\int_{u/\eps'}^{u/\eps} e^{-v^2/2}\ud v
 \Big|^2\ud u
.
\end{array}\]
In particular this quantity does not depend on $y'$.
Clearly, for any fixed $u\in\R$, we have
\[\ds\lim_{\eps,\eps'\to 0}\left(\ds\int_{u/\eps'}^{u/\eps} e^{-v^2/2}\ud v\right)=0.\]
Furthermore, for $0<\eps,\eps'\ll 1$, it can be dominated as follows
\[\left|\ds\int_{u/\eps'}^{u/\eps} e^{-v^2/2}\ud v\right|
=\left|\ds\int_{u/\eps'}^{u/\eps} e^{-v^2/4}\ e^{-v^2/4}\ud v\right|
\leq e^{-u^2/2}\ds\int e^{-v^2/4}\ud v\]
which lies in $L^2(\R)$.
Therefore the Lebesgue theorem tells us that 
\[
\ds\lim_{\eps,\eps'\to 0}\Delta_{\eps,\eps'}=0.\]

We go back to $D_{x,1}$ that we split into
\[
D_{x,1}(t)=\ds\int_0^{t-\eta}...\ud s+\ds\int_{t-\eta}^t...\ud s
\]
with $0<\eta\ll t\leq T<\infty$ to be determined.
The  integral on $(0,t-\eta)$ can be estimated owing to the previous manipulations and the Cauchy--Schwarz inequality; we get
\[\begin{array}{lll}
\left|\ds\int_0^{t-\eta}...\ud s\right|
&\leq& \|\rho^{(\eps)}\|_{L^\infty}
\ds\int_0^{t-\eta} \ds\frac{C_0}{\sqrt{t-s}} \ds\sup_{x',y'}| \mathcal I_{\eps,\eps'}(x',y',s) |\ud s
\\
&\leq&
\ds\frac{C_0\sqrt\pi}{2}\|\rho^{(\eps)}\|_{L^\infty}\sqrt{\Delta_{\eps,\eps'}}
\left(\ds\int_0^{t-\eta}\ds\frac{\ud s}{t-s}\right)^{1/2}\left(\ds\int_0^{t-\eta}\ds\iint 
(1+|z|^2)|\nabla \rho^{(\eps)}(z,s)|^2\ud s
\ud z\right)^{1/2}
\\
&\leq&
C_T\sqrt{\Delta_{\eps,\eps'}}
\sqrt{\ln(t/\eta)}
\end{array}
\]
for a certain $C_T>0$, that comes from the estimates in Lemma~\ref{west}.
For the integral over $(t-\eta,t)$, we claim that we can find a constant, still denoted $C_T>0$, such that
\begin{eqnarray}
\left|\ds\int_{t-\eta}^t...\ud s\right|
&\leq& \int_{t-\eta}^t \frac{C_0}{\sqrt{t-s}}\iiiint \left[\delta^{(\eps)}(y'-y")+\delta^{(\eps')}(y'-y")\right]\\
&&\qquad\qquad\qquad\times
\rho^{(\eps)}(x',y',s)\rho^{(\eps')}(x",y",s)\ud x'\ud y\ud x"\ud y"\ud s \nonumber \\
&\leq &C_T\sqrt{\eta}. \nonumber
\end{eqnarray}
This conclusion follows from uniform bounds  (with respect to $\eps,\eps'$ and $s$) of expressions like
\[\mathcal J_{\eps,\eps'}(s)=
\int \delta^{(\eps)}(y'-y")\rho^{(\eps)}(x',y')\rho^{(\eps')}(x",y")\ud z'\ud z".
\]
Let us set $$\tilde{\rho}^{(\eps)}(x",y',s)=\int \delta^{(\eps)}(y'-y")\rho^{(\eps)}(x",y",s)\ud y".$$
We control $\mathcal J_{\eps,\eps'}(s)$ by using moments. Indeed, we get
\begin{eqnarray}
\mathcal J_{\eps,\eps'}(s) &=& \iiint \rho^{(\eps)}(x',y',s)\tilde{\rho}^{(\eps')}(x",y',s)\ud x'\ud y'\ud x" \nonumber \\
&\leq & \frac12 \iiint \frac{1+x'^2}{1+x"^2} |\rho^{(\eps)}|^2(x',y',s)\ud x'\ud y'\ud x" \nonumber
\\
&&\quad+ \frac12 \iiiint \frac{1+x"^2}{1+x'^2} |\tilde{\rho}^{(\eps')}|^2(x",y',s)\ud x'\ud y'\ud x"\nonumber \\
&\leq & \frac{\pi}{2} \iint (1+x^2)|\rho^{(\eps)}|^2(x,y,s)\ud x\ud y+\frac{\pi}{2}\iint(1+x^2)|\tilde{\rho}^{(\eps')}|^2(x,y,s)\ud x\ud y.\nonumber
\end{eqnarray}
Owing to Lemma~\ref{west}, we already know that the first integral in the right hand side is bounded (the constant depends on the final time). For the second term, 
we simply write
\[\begin{array}{l}
\ds\iint(1+x^2)|\tilde{\rho}^{(\eps')}|^2(x,y,s)\ud x\ud y
\\
\qquad
\leq 
\ds\iint(1+x^2)\left|
\int \sqrt{\delta^{(\eps)}(y-y')}\  \sqrt{\delta^{(\eps)}(y-y')}\rho^{(\eps)}(x,y',s)\ud y'
\right|^2(x,y,s)\ud x\ud y
\\
\qquad
\leq 
\ds\iint(1+x^2)\left\{ 
\int \delta^{(\eps)}(y-y')\ud y' \times\int\delta^{(\eps)}(y-y')|\rho^{(\eps)}(x,y',s)|^2\ud y'
\right\}\ud x\ud y
\\
\qquad\leq 
\ds\iint(1+x^2) |\rho^{(\eps)}(x,y',s)|^2 \left(\ds\int
\delta^{(\eps)}(y-y')\ud y
\right)\ud x\ud y'
\\
\qquad\leq 
\ds\iint(1+x^2) |\rho^{(\eps)}(x,y',s)|^2\ud x\ud y'
\end{array}\]
which is thus also bounded uniformly with respect to $\eps,\eps'>0$ and $0\leq s\leq T<\infty$.
Finally, we arrive at
\[
|D_{x,1}(t)|\leq C_T\left(\sqrt{\ln( t/\eta)} \sqrt{\Delta_{\eps,\eps'}}+ \sqrt{\eta}\right)
\] which holds for any $0<\eta\ll t\leq T<\infty$.
It shows that  $\lim_{(\eps,\eps')\to 0}D_{x,1}(t)=0$ uniformly on $[0,T]$.\\

The analysis of $D_{x,2}$ is simpler; it relies on the following observation
\[\begin{array}{l}
\ds\left|\iint \delta^{(\eps')}(y'-y")\big(\sgn^{(\eps)}(x'-x")-\sgn^{(\eps')}(x'-x")\big)\rho^{(\eps')}(x",y",s)\ud x"\ud y"
\right| 
 \nonumber \\
\qquad\qquad\ds\leq \|\rho^{(\eps')}\|_\infty \int |\sgn^{(\eps)}(x'-x")-\sgn^{(\eps')}(x'-x")|\ud x" \\
\qquad\qquad\ds
\leq \sqrt{\ds\frac2\pi}
\ds\int\Big| \ds\int_{u/\eps'}^{u/\eps} e^{-v^2/2}\ud v
 \Big|\ud u= \tilde\Delta_{\eps,\eps'}.
\end{array}\]
A straightforward adaptation of the argument used for studying $\Delta_{\eps,\eps'}$ shows that $\lim_{\eps,\eps'\to 0} \tilde\Delta_{\eps,\eps'}=0$ and we have
\[
|D_{x,2}(t)| \leq \int_0^t \frac{C_0}{\sqrt{t-s}} \|\rho^{(\eps)}(s,\cdot)\|_{L^1}\ \tilde \Delta_{\eps,\eps'}\ud s \leq C_T \tilde\Delta_{\eps,\eps'}
\]
for any $0\leq t\leq T<\infty$.
Of course, $D_{y,1}$ and $D_{y,2}$ can be dealt with in a similar manner.
\qed

Coming back to \eqref{int112}, 
we arrive at 
\begin{equation}
\label{est_important}
\|(\rho^{(\eps)}-\rho^{(\eps')})(t,\cdot)\|_{L^1}
\leq 
\big(\mathcal A_{\eps,\eps'} + \widetilde{\mathcal A}(R,\lambda)\big) + \mathcal B(R)\ds\int_0^t\ds\frac{\|(\rho^{(\eps)}-\rho^{(\eps')})(s,\cdot)\|_{L^1}}{\sqrt{t-s}}\ud s,
\end{equation}
which holds for any $0\leq t\leq T<\infty$ and $0<R<\infty$  with 
\begin{equation}
\label{est_important2}
\begin{array}{l}
\mathcal A_{\eps,\eps'}=\ds\sup_{0\leq t\leq T} A_{\eps,\eps'}(t)+ \ds\sup_{0\leq t\leq T}D_{\eps,\eps'}(t),
\\
 \widetilde{\mathcal A}(R,\lambda)=
 (\beta_2+\gamma_2)\ds\frac{1+\lambda}{\lambda^2} \sqrt Te^{\varphi(\lambda)T} e^{-\alpha\lambda R},
\\
\mathcal B(R)= (\beta_1+\gamma_1)RM,
\end{array}
\end{equation}
with $M=\sup_{\eps>0}\|\rho^{(\eps)}\|_{L^\infty}$, which is known to be finite.
We should bear in mind the fact that $\beta_2$ and $\gamma_2$ depend on $\lambda$ too, through the exponential moments
$\mathcal E_0(2\lambda(1+\alpha))$.
Applying the singular Gr\"onwall Lemma \ref{sing_G} leads to 
\[
\|(\rho^{(\eps)}-\rho^{(\eps')})(t,\cdot)\|_{L^1}\leq \big(\mathcal A_{\eps,\eps'}+ \widetilde{\mathcal A}(R,\lambda) \big)E_{1/2}\Big(\ds\frac{\mathcal B(R)}{2}\sqrt t\Big).
\]
We remind the reader that the Mittag--Leffler function is explicitely known
\[
E_{1/2}(z)=\ds\sum_{k=0}^\infty \ds\frac{z^k}{\Gamma(1+k/2)}=e^{z^2}\mathrm{erfc}(-z)=\ds\frac{2}{\sqrt\pi} \ e^{z^2} \ds\int_{-\infty}^z e^{-u^2}\ud u.
\]

We are paying attention to the term 
$\widetilde{\mathcal A}(R,\lambda)E_{1/2}\Big(\ds\frac{\mathcal B(R)}{2}\sqrt t\Big)$.
This is where we make use of \eqref{hypci} to control $\mathcal E_0(2\lambda(1+\alpha))$ in the coefficients $\beta_2,\gamma_2$.
As far as $\lambda \geq 1$, we have
$\varphi(\lambda)\leq 4D(1+\alpha)^2 \lambda^2$.
Therefore, 
up to some irrelevant constant hereafter denoted by $K>0$, 
the quantity of interest can be dominated by
\[\ds\frac{\sqrt T}{\lambda}\exp\Big((DT +p_2)4(1+\alpha)^2 \lambda^2  -\alpha R\lambda + p_0 +\ds\frac{ (\beta_1+\gamma_1)^2M^2}{4} TR^2 \Big).
\] 
The exponent recasts as
\[
4(DT +p_2)(1+\alpha)^2\Big(\lambda - \ds\frac{\alpha R}{8(DT +p_2)(1+\alpha)^2} \Big)^2
- R^2\Big(
\ds\frac{\alpha^2}{16(DT +p_2)(1+\alpha)^2}- \ds\frac{ (\beta_1+\gamma_1)^2M^2}{4} T
\Big) + p_0.
\] 
We start by picking $0<T<T_{\star}$ small enough, so that 
$$
\ds\frac{\alpha^2}{16(DT +p_2)(1+\alpha)^2}- \ds\frac{ (\beta_1+\gamma_1)^2M^2}{4} T
\geq \ds\frac{\alpha^2}{8p_2(1+\alpha)^2}=q_2>0$$
holds for any $0\leq t\leq T_\star$.
Next, let $\omega>0$.
We can find $R=R(\omega)$ large enough
so that 
\[
K \sqrt T\ e^{p_0} e^{-R^2q_2}\leq \ds\frac\omega 2
\] 
holds.
Possibly enlarging $R(\omega)$, we also suppose that 
$$\ds\frac{\alpha R}{8(DT +p_2)(1+\alpha)^2}\geq 1.$$
We then make use of the estimates with 
$$\lambda= \ds\frac{\alpha R}{8(DT +p_2)(1+\alpha)^2}$$
which leads to 
\[\widetilde{\mathcal A}(R,\lambda)E_{1/2}\Big(\ds\frac{\mathcal B(R)}{2}\sqrt t\Big)\leq \ds\frac\omega2.\]
Finally, there exists $\eps(\omega)>0$ small enough such that for any $0<\eps,\eps'\leq \eps(\omega)$ 
we get 
\[
\mathcal A_{\eps,\eps'}
E_{1/2}\Big(\ds\frac{\mathcal B(R)}{2}\sqrt t\Big)\leq \ds\frac\omega2.
\]
It follows that 
\[
\|(\rho^{(\eps)}-\rho^{(\eps')})(t,\cdot)\|_{L^1}\leq \omega
\]
holds for any $0\leq t\leq T\leq T_\star$, provided $0<\eps,\eps'\leq \eps(\omega)$.
We extend this result on any time interval  by repeating the reasoning on subintervals of length smaller than $T_\star$. 
Therefore $\big(\rho^{(\eps)}\big)_{\eps>0}$ is a Cauchy sequence in the Banach space $C([0,T],L^1(\mathbb R^2))$ and it converges strongly to a solution of \eqref{pde}--\eqref{ci}.
The proof can be readily adapted to establish the uniqueness of the solution of  \eqref{pde}--\eqref{ci} for a symmetric initial data verifying \eqref{hypci}.
%\textcolor{blue}{\it Question here: actually, we prove only the uniqueness among the symmetric solutions, don't we? There could be other solutions violating symmetry...}
\qed

\subsubsection{A convergence rate for $\rho^{(\eps)}$}

Following the same strategy as in the proof of Theorem \ref{main2}, it is possible to give a rate of convergence for $\rho^{(\eps)}$.
\begin{theorem}\label{rate}
Let $T$ be a fixed time. Let $\rho^{(\eps)}$ be the symmetric solution of \eqref{reg_pde}--\eqref{approxF} with initial data $\rho_0$ ($\rho_0$ is assumed to be symmetric), and let $\rho$ be the symmetric solution of \eqref{pde}--\eqref{def_f} with same initial data $\rho_0$. Then there exist constants $C(\rho_0,T)$ and $0<\nu(\rho_0,T)<1$ depending on both $\rho_0$ and $T$, such that
\begin{equation}
\sup_{t\in [0,T]}||(\rho^{(\eps)}-\rho)(t)||_{L^1} \leq  C(\rho_0,T)  \eps^{\frac12 \nu(\rho_0,T)}
%\left[\sqrt{\eps}\sqrt{\ln(T/\eps)} \right]^{\nu(\rho_0,T)}
\end{equation}
\end{theorem}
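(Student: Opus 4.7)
The plan is to rerun the argument of Section~\ref{sec:cauchy} with $\rho^{(\eps')}$ replaced by the limit $\rho$ (which makes sense by Corollary~\ref{main2b}), but to keep careful track of the $\eps$--dependence in every constant, rather than passing to $\omega>0$ qualitatively. Taking the same initial data kills the $A$--term completely, so the Duhamel identity \eqref{Duhamel} applied to $\rho^{(\eps)}-\rho$ leads to the analog of \eqref{est_important}:
\[
\|(\rho^{(\eps)}-\rho)(t)\|_{L^1}\leq \bigl(\mathcal D_\eps(T) +\widetilde{\mathcal A}(R,\lambda)\bigr)+\mathcal B(R)\int_0^t\frac{\|(\rho^{(\eps)}-\rho)(s)\|_{L^1}}{\sqrt{t-s}}\,\ud s,
\]
where $\widetilde{\mathcal A}(R,\lambda)$ and $\mathcal B(R)$ are exactly as in \eqref{est_important2} (their definitions did not depend on $\eps'$), and $\mathcal D_\eps(T)=\sup_{[0,T]}D_{\eps,0}(t)$ is the pointwise limit of $D_{\eps,\eps'}$. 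The singular Gr\"onwall inequality of Lemma~\ref{sing_G} then produces
\[
\|(\rho^{(\eps)}-\rho)(t)\|_{L^1}\leq \bigl(\mathcal D_\eps(T)+\widetilde{\mathcal A}(R,\lambda)\bigr)E_{1/2}\!\Bigl(\tfrac{\mathcal B(R)}{2}\sqrt t\Bigr).
\]

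The first main task is to quantify $\mathcal D_\eps(T)$. Setting $\eps'\to 0$ in Lemma~\ref{l4}, the quantities $\Delta_{\eps,\eps'}$ and $\tilde\Delta_{\eps,\eps'}$ become $\Delta_\eps=\int|\sgne(u)-\sgn(u)|^2\ud u$ and $\tilde\Delta_\eps=\int|\sgne(u)-\sgn(u)|\ud u$. A direct computation using $\sgne(u)-\sgn(u)=-\sgn(u)\,\mathrm{erfc}(|u|/(\eps\sqrt 2))$ gives $\Delta_\eps\leq C\eps$ and $\tilde\Delta_\eps\leq C\eps$. Inserting this into the splitting $D_{x,1}=\int_0^{t-\eta}+\int_{t-\eta}^t$ from the proof of Lemma~\ref{l4} and optimizing the auxiliary parameter $\eta$ (choose $\eta\sim \Delta_\eps\log(1/\Delta_\eps)$) yields $|D_{x,1}(t)|\leq C_T\sqrt{\eps\,\log(1/\eps)}$; the corresponding bound for $D_{x,2}$ (and for $D_{y,1}$, $D_{y,2}$) is even simpler and gives $O(\eps)$. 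Thus $\mathcal D_\eps(T)\leq C_T\,\eps^{1/2}\sqrt{\log(1/\eps)}$.

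Next I would optimize $R$ and $\lambda$ as explicit functions of $\eps$ rather than of an arbitrary tolerance $\omega$. Exactly as in Section~\ref{sec:cauchy}, the exponent of $\widetilde{\mathcal A}(R,\lambda)E_{1/2}(\mathcal B(R)\sqrt T/2)$ is, up to a constant and modulo $p_0$,
\[
4(DT+p_2)(1+\alpha)^2\lambda^2-\alpha\lambda R+\tfrac14(\beta_1+\gamma_1)^2M^2TR^2,
\]
and choosing $\lambda=\alpha R/\bigl(8(DT+p_2)(1+\alpha)^2\bigr)$ completes the square. For $T\leq T_\star$ (with $T_\star$ as in the proof of Theorem~\ref{main2}) the net $R^2$--coefficient is $-q_2<0$. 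Now set $R^2=\kappa\log(1/\eps)$ for a constant $\kappa$ still to be fixed; then $\widetilde{\mathcal A}(R,\lambda)E_{1/2}(\mathcal B(R)\sqrt T/2)\leq C\eps^{\kappa q_2}$ and, since $\mathcal B(R)\sqrt T\sim\sqrt{\log(1/\eps)}$, one has $E_{1/2}(\mathcal B(R)\sqrt T/2)\leq C\eps^{-\kappa\,\mathcal B_1^2 T/4}$ for some constant $\mathcal B_1$. Choosing $\kappa$ so small that $\kappa(\mathcal B_1^2 T/4)<1/4$ gives, after combining with the bound on $\mathcal D_\eps(T)$,
\[
\sup_{t\in[0,T]}\|(\rho^{(\eps)}-\rho)(t)\|_{L^1}\leq C\,\eps^{\nu/2}
\]
for a strictly positive $\nu$ depending on $\rho_0$ and $T$, on any sub-interval of length $\leq T_\star$. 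Iterating over $\lceil T/T_\star\rceil$ sub-intervals (using that the rate compounds multiplicatively, so the overall exponent only shrinks by a fixed factor) extends the estimate to the full interval $[0,T]$, possibly with a smaller $\nu$.

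The one genuine difficulty is the tension between needing $R$ large (to kill $\widetilde{\mathcal A}$, the tail contribution from the exponential moments) and needing $\mathcal B(R)\sqrt T$ moderate (because $E_{1/2}$ grows like a Gaussian). The hypothesis \eqref{hypci}, which bounds $\mathcal E_0(\lambda)$ by $e^{p_0+p_2\lambda^2}$, is exactly what makes the completion of the square in $\lambda$ give a negative quadratic in $R$ for $T$ small; and the explicit logarithmic scaling $R\sim\sqrt{\log(1/\eps)}$ is dictated by the requirement that the polynomial error $\mathcal D_\eps(T)\sim\eps^{1/2}$ survives multiplication by $E_{1/2}$ with a definite, even if small, positive exponent $\nu/2$. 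Any improvement in $\nu$ would require either sharper bounds on the $D_{x,1}$--type terms or a finer dissipative control on $\rho^{(\eps)}$ for large $|z|$.
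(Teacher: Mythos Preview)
Your proposal is correct and follows essentially the same route as the paper's own proof: set $\eps'=0$ in the Cauchy argument of Section~\ref{sec:cauchy}, quantify $\Delta_\eps,\tilde\Delta_\eps=O(\eps)$ to get $\mathcal D_\eps(T)=O(\sqrt{\eps\log(1/\eps)})$, apply the singular Gr\"onwall lemma, complete the square in $\lambda$, take $R^2\sim\log(1/\eps)$ to balance the two contributions, and iterate on subintervals of length $\leq T_\star$. The only cosmetic differences are that the paper chooses $\eta=\Delta_\eps$ (rather than your $\eta\sim\Delta_\eps\log(1/\Delta_\eps)$), fixes $R$ by solving $e^{-(K_{T^\star}+q_2)R^2}=\mathcal A_\eps$ directly (yielding the explicit exponent $\bar\nu=q_2/(K_{T^\star}+q_2)$) instead of introducing a free parameter $\kappa$, and spells out the subinterval recursion as $E_{k+1}\leq C_{T^\star}E_k^{\bar\nu}$.
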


\begin{remark}
Observe
that $\nu(\rho_0,T)$ is always smaller than $1$, and
it has the following asymptotic behavior
\[
\ds\lim_{T\to 0} \nu(\rho_0,T)=1,\qquad
\ds\lim_{T\to +\infty} \nu(\rho_0,T)=0,\]
 for any $\rho_0$.
Note the $1/2$ factor: with the present proof the convergence rate cannot be better than $\eps^{1/2}$.
\end{remark}

\noindent
{\bf Proof.}
The idea is to revisit the computations in Section~\ref{sec:cauchy}, in order to estimate more accurately the distance between $\rho^{(\eps)}$ and $\rho$, solution of the singular PDE. Since we have used  estimates that are uniform with respect to  $\eps$, we may simply take $\eps'=0$ in the computations performed above.
It leads to the following observations:

\begin{itemize}
\item \emph{$A$ term:} We take the same initial condition for $\rho^{(\eps)}$ and $\rho$, hence the error related to the initial condition simply vanishes: $A_\eps(t)=0$.\\

\item
\emph{$B$ and $C$ terms:} We use Lemmas \ref{l2} and \ref{l3}, with $\eps'=0$.\\

\item
\emph{$D$ term:} We need to estimate
\[
\Delta_\eps = \frac{4}{\pi} \int_0^\infty \left| \int_{u/\eps}^\infty e^{-v^2/2}\ud v\right|^2 \ud u.
\] 
Since $v^2/2\geq x^2/2+x(v-x)$, we get
\[
\int_x^\infty e^{-v^2/2}\ud v \leq e^{-x^2/2} \int_0^\infty e^{-xs}\ud s \leq \frac{e^{-x^2/2}}{x}.
\]
Thus, for any $\alpha>0$ we obtain 
\begin{eqnarray}
\Delta_\eps &\leq&  \frac{4}{\pi} \left(\int_0^\alpha \sqrt{\frac{\pi}{2}}\ud u + \int_\alpha^\infty \frac{\eps^2}{u^2}e^{-u^2/\eps^2}\ud u\right) \nonumber \\
&\leq &\frac{4}{\pi} \left( \alpha \sqrt{\frac{\pi}{2}} + \eps \int_{\alpha/\eps}^\infty \frac{1}{s^2}e^{-s^2}\ud s\right) .\nonumber 
\end{eqnarray}
Choosing $\alpha=\eps$, this relation   yields 
$$\Delta_\eps 
 \leq  C\eps $$
where $C$ is an absolute constant.
A very similar reasoning applied to 
\[
\tilde{\Delta}_\eps = 2\sqrt{\frac{2}{\pi}} \int_0^\infty \left| \int_{u/\eps}^\infty e^{-v^2/2}\ud v\right| \ud u
\]
yields
\[
\tilde{\Delta}_\eps \leq C \eps
\]
where again $C$ is an absolute constant.

The estimate for $D_{x,1}(t)$ reads, for any $0<\eta<t$,
\[
|D_{x,1}(t)| \leq C_T \left(\sqrt{\Delta_\eps}\sqrt{\ln(t/\eta)}+\sqrt{\eta}\right).
\]
Choosing $\eta=\Delta_\eps$ (it is possible to do marginally better), we obtain, at the price of modifying $C_T$,
\[
|D_{x,1}(t)| \leq C_T \sqrt{\Delta_\eps}\sqrt{\ln(t/\Delta_\eps)}~,
\]
which, according to the above estimate for $\Delta_\eps$, yields
\[
|D_{x,1}(t)| \leq C_T \sqrt{\eps}\sqrt{\ln(t/\eps)}.
\]
Since $D_{x,2}\leq C_T \eps$, we see that $D_{x,1}$ is the largest contribution to $D_\eps$.
\end{itemize}

We use now \eqref{est_important}--\eqref{est_important2} of the previous section with $\eps'=0$:
\begin{eqnarray}
||(\rho^{(\eps)}-\rho)(t)||_{L^1} &\leq &(\mathcal{A}_\eps+\tilde{A}(R,\lambda))E_{1/2}\left(\frac{(\beta_1+\gamma_1)^2M^2R^2T}{4}\right). \label{eq:est1}
\end{eqnarray}
The contribution to $\mathcal{A}_\eps$ coming from the initial condition vanishes, since we choose the same initial condition for $\rho^{(\eps)}$ and 
$\rho$. The second contribution to  $\mathcal{A}_\eps$ comes from the "$D$ terms", which are smaller than $C_T \sqrt{\eps}\sqrt{\ln(T/\eps)}$.

We can play the same game as in the proof of the Cauchy property: write $E_{1/2}(z)\leq c e^{z^2}$ for some $c$, and observe that  the exponent in \eqref{eq:est1} can be rewritten as
\begin{equation}
\label{expo}
4(DT+p_2)(1+\alpha)^2\left(\lambda -\frac{\alpha R}{8(DT+p_2(1+\alpha)^2}\right)^2-R^2\left(\frac{\alpha^2}{4(DT+p_2)(1+\alpha)^2}-\frac{(\beta_1+\gamma_1)^2M^2 T}{4}\right)+p_0.
\end{equation}
We choose $T=T^\ast$ small enough so that the second term, proportional to $R^2$ is negative, which means
\[
\frac{\alpha^2}{4(DT^\ast+p_2)(1+\alpha)^2}-\frac{(\beta_1+\gamma_1)^2M^2 T^\ast}{4} =q_2(T^\ast)>0
\]
Then we choose $\lambda$ such that the first term in \eqref{expo} vanishes.
We finally obtain
\begin{equation}
\label{rhsR}
\sup_{t\in [0,T^\ast]}||(\rho^{(\eps)}-\rho)(t)||_{L^1} \leq  C\mathcal{A}_\eps \exp{(K_{T^\ast} R^2)} + C' \exp{(-q_2 R^2)}
\end{equation} 
where $C$ and $C'$ depend on $T^\ast$, and 
\[
K_T = \frac{(\beta_1+\gamma_1)^2 M^2 T}{4}.
\]  
We now choose $R$ to minimize the right hand side of \eqref{rhsR}. For instance, taking $R$ such that
\[
\exp{[-(K_{T^\ast}+q_2(T^\ast))R^2]} = \mathcal{A}_\eps
\]
yields, for a modified $C$,
\begin{equation}
\sup_{t\in [0,T^\ast]}||(\rho^{(\eps)}-\rho)(t)||_{L^1} \leq  C  \left[\sqrt{\eps}\sqrt{\ln(T^\ast/\eps)} \right]^{\bar{\nu}}.
\end{equation}
with 
\[
\bar{\nu} = \frac{q_2(T^\ast)}{K_{T^\ast}+q_2(T^\ast)}
\]
Slightly decreasing $\bar{\nu}$ to absorb the logarithmic term, this proves the claim for any $T<T^\ast$. For $T>T^\ast$, we 
divide $[0,T]$ into subintervals of size $T^\ast$, and apply the previous strategy for each subinterval.
We have to take into account the error related to initial condition at the beginning of each subinterval.  This error is given by the total error at the end of the previous 
subinterval. Thus we have to reintroduce an error related to initial data. Calling $E_k$ the bound on the error at the end of the interval $[(k-1)T^\ast, kT^\ast]$, 
and $\mathcal{A}_\eps^{(k)}$ the $\mathcal{A}_\eps$ term to be considered on the interval  $[kT^\ast, (k+1)T^\ast]$, we have
\[
\mathcal{A}_\eps^{(k)} \leq C_{T^\ast} \sqrt{\eps}\sqrt{\ln(T^\ast/\eps)} +  C_{T^\ast} E_k \leq C_{T^\ast} E_k, 
\]
where $C_{T^\ast}$ can take different values, but remains a constant depending on $\rho_0, T^\ast$, and not on $\eps$.
With the same reasoning as above, we conclude with
\[
E_{k+1} \leq C_{T^\ast} E_k^{\bar{\nu}}.
\]
Since $T^\ast$ is of order $1$, we have to repeat the argument on a finite number of subintervals to reach the prescribed time $T$. Each iteration of course decreases the convergence rate, and increases the prefactor, but for any $T$, we can guarantee a finite $\nu$, as claimed.
\qed

\section{Particle approximation}\label{sec:particles}

We consider now an $N$-particle description of the dynamics.
Namely, let $Z^{(\eps)}_i=(X^{(\eps)}_i,Y^{(\eps)}_i)$ be the solution of the stochastic differential system 
\begin{eqnarray}
\ud X^{(\eps)}_{i,t}&=& \frac1N \sum_{j\neq i} K_x^{(\eps)}(Z^{(\eps)}_{i,t}-Z^{(\eps)}_{j,t}) \ud t +\sqrt{2D}\ud B_{i,x,t}, 
\label{eq:reg_part_x}\\
\ud Y^{(\eps)}_{i,t} &=& \frac1N \sum_{j\neq i} K_y^{(\eps)}(Z^{(\eps)}_{i,t}-Z^{(\eps)}_{j,t} ) \ud t +\sqrt{2D} \ud B_{i,y,t}, 
\label{eq:reg_part_y}
\end{eqnarray}
where $B_{i,x}$ and $B_{i,y}$ are independent Brownian motions.
Here and below, the interaction kernel is given by 
\begin{eqnarray}
K_x^{(\eps)}(z) &=& -\mbox{sgn}^{(\eps)}(x) \delta^{(\eps)}(y) \nonumber \\
K_y^{(\eps)}(z) &=& -\mbox{sgn}^{(\eps)}(y) \delta^{(\eps)}(x), \nonumber
\end{eqnarray}
with $z=(x,y)$.
It is then clear that $\|  K_x^{(\eps)} \|_{\mathrm{Lip}} =C/\varepsilon^2$, and the same holds true for  $K_y^{(\eps)}$.
We assume that the initial conditions for the particles' trajectories  $$Z^{(\eps)}_{i,t}\Big|_{t=0}=Z^{(\eps)}_{i,0}$$
 are independent random variables, with common law $\rho_0$.
 In the discussion, we naturally assume 
that $\rho_0$ is a probability density.
Accordingly, for both $\rho$ and $\rho^{(\varepsilon)}$
solutions of \eqref{pde} and \eqref{reg_pde} respectively, associated to the initial data $\rho_0$, we have
\[\ds\iint \rho\ud z=\ds\iint \rho^{(\varepsilon)}\ud z=\ds\iint \rho_0\ud z=1.\]
Moreover, we assume throughout this section that $\rho_{0}$ is such that the symmetric existence theorem works
as we shall use the rate of convergence established in this framework.
We associate to the solutions of this system \eqref{eq:reg_part_x}--\eqref{eq:reg_part_y}, the empirical measure
 \[
\widehat {\rho}^{(\eps),N} = \frac1N \sum_{i=1}^N \delta(z-Z^{(\eps)}_i).
\]
Note that the interaction force in \eqref{eq:reg_part_x}--\eqref{eq:reg_part_y} has been rescaled by the $1/N$ factor (roughly speaking we have replaced the kernel $K^{(\eps)}$ by $\frac1N K^{(\eps)}$), 
so that the total force exerted on a given particle remains of order 1; this is the so--called \emph{mean field regime}.
We refer the reader to the surveys \cite{Boll, FGol} for an introduction to such regimes.
The goal of this section is to investigate the convergence of this particle approximation to $\rho$, the solution of the singular PDE \eqref{pde}
in the regime $N\to \infty$, $\eps\to 0$. 
\\

The analysis uses the Wasserstein distance, see  \cite{Dob,CV} for a thorough discussion on this notion. The Wasserstein distance $W_1(\mu,\nu)$ between two probability measures $\mu, \nu$ on $\R^2$ is defined as 
\[
W_1(\mu,\nu)=\sup\left\{\left|\int\varphi(z)\mu(\mathrm d z)-\int\varphi(z)\nu(\mathrm d z)\right| ,%\|\varphi\|_{\infty}\le 1, 
\|\varphi\|_{\mathrm{Lip}}\le 1 \right\},
\]
where 
\[\|\varphi\|_{\mathrm{Lip}}= \ds\sup_{x\neq y,\ \ x,y\in\R^2}\ds\frac{|\varphi(x)-\varphi(y)|}{|x-y|}.
\]
 Note that $W_1$ determines the topology of tight convergence on the space of probability measures on $\mathbb R^2$, see \cite[Chap. 6]{CV}. 
 
 Wasserstein metric is well defined on the set of probability measures with finite first moment. This is the case for $\rho$, the solution of the original PDE \eqref{pde}, see Proposition  \ref{apriori} as well as $\rho^{(\eps)}$, the solution of the regularized PDE \eqref{reg_pde}, see Proposition \ref{lemma:rho_eps_estimates}. It also holds true for the particle approximations $\widehat {\rho}^{(\eps),N}$ (they are finite sums of Dirac delta distributions).

It turns out that $W_1$ is a well adapted tool to investigate the limit $N\to\infty$, see \cite{Boll, Dob, FGol, Sz}.
% \textcolor{blue}{\it Question here: I still don't understand why we should %use this distance $d$ to define $W_1$, nor why we should include $\|\varphi\|_\infty$ %in the definition of $\|\varphi\|_{\mathrm{Lip}}$. Why not using Euclidean %distance, and the "real" Lipshitz constant? Then we should restrict the %use of $W_1$ to measures with a finite first moment, but is it a problem? %See for instance remark 6.5 in \cite{CV}.}
The strategy is to write 
\[
W_1(\widehat {\rho}^{(\eps),N},\rho) \leq W_1(\widehat {\rho}^{(\eps),N},\rho^{(\eps)}) + W_1(\rho^{(\eps)},\rho),
\]
where $\rho^{(\eps)}$ is the solution of the regularized PDE \eqref{reg_pde}.
The second term is controlled by the rate of convergence established in  the previous  section, and the first one by adapting ``standard'' MacKean--Vlasov estimates, as we are going to detail now.
According to \cite{Sz}, 
we start by introducing an auxiliary system of interacting particles.
The solution $\rho^{(\eps)}$ of the regularized PDE  is also the law of the solution of the system of SDE
\begin{eqnarray}
\ud\tilde{X}^{(\eps)}_{i} &=& (K_x^{(\eps)} \star \rho^{(\eps)})(\tilde{Z}^{(\eps)}_i) \ud t +\sqrt{2D}\ud B_{i,x} ,\label{eq:MKx}\\
\ud\tilde{Y}^{(\eps)}_{i} &=& (K_y^{(\eps)} \star \rho^{(\eps)})(\tilde{Z}^{(\eps)}_i)\ud t +\sqrt{2D}\ud B_{i,y}. 
\label{eq:MKy}
\end{eqnarray}
Note that both $Z^{(\eps)}_i=(X^{(\eps)}_i,Y^{(\eps)}_i)$ and $\tilde Z^{(\eps)}_i=(\tilde X^{(\eps)}_i,\tilde Y^{(\eps)}_i)$ are driven by the same Brownian motions and we choose them to have the same initial condition. The system of stochastic differential equations \eqref{eq:MKx}--\eqref{eq:MKy} (respectively \eqref{eq:reg_part_x}--\eqref{eq:reg_part_y}) has a unique solution, as the coefficients $(t,z)\mapsto K_x^{(\eps)} \star \rho^{(\eps)}(z)$ and $(t,z)\mapsto K_y^{(\eps)} \star \rho^{(\eps)}(z)$ are Lipschitz with respect to $z$ and continuous with respect to $t$.  Moreover, 
the law $\mu^{(\eps)}$ of $\tilde Z^{(\eps)}_i=(\tilde X^{(\eps)}_i,\tilde Y^{(\eps)}_i)$ is a (weak) solution of
\begin{eqnarray}
\partial_t \mu^{(\eps)} &=& \nabla\cdot \left( (-K^{(\eps)} \star \rho^{(\eps)}) \mu^{(\eps)}\right) +D \Delta \mu^{(\eps)} \nonumber \\
\mu^{(\eps)}\Big|_{t=0} &=&\rho_0.\nonumber
\end{eqnarray}
Since this equation has a unique solution, and $\rho^{(\eps)}$ is a solution, it follows that $\mu^{(\eps)}=\rho^{(\eps)}$.  We define $\widetilde{\rho}^{(\eps),N}$ to be the empirical measure associated with the $\tilde{Z}^{(\eps)}_i$:
\[
\widetilde{\rho}^{(\eps),N} = \frac1N \sum_{i=1}^N \delta(z-\tilde{Z}^{(\eps)}_i).
\]
The following statement is an immediate corollary of Theorem 1 in \cite{FG}: 

\begin{prop}\label{rateindparticles} Let $0<T<\infty$.
Assume that there exist $q>2$ and a constant $C=C(T)$, that depends on $T$ but is independent of $\varepsilon$, such that 
\begin{equation}\label{morethantwomoment}
\sup_{t\in [0,T]}\int|z|^q\rho^{(\eps)}(\mathrm d z)\le C.
\end{equation}
Then there exists a constant $\tilde C= \tilde C(T)$ independent of $\varepsilon$ such that   
\begin{equation}\label{tb}
\sup_{t\in [0,T]}{\mathbb E}[W_1(\widetilde{\rho}^{(\eps),N},\rho^{(\eps)})]\le {\tilde C\over \sqrt{N}}\log{(1+N)}.
\end{equation}
\end{prop}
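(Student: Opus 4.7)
The strategy is to exploit the fact that the auxiliary particles $\tilde Z^{(\eps)}_{i,t}$ solving \eqref{eq:MKx}--\eqref{eq:MKy} are \emph{i.i.d.}, so that $\widetilde{\rho}^{(\eps),N}$ is literally the empirical measure of $N$ i.i.d.\ samples drawn from $\rho^{(\eps)}(t,\cdot)$. Once this is secured, the claim reduces to a direct application of the Fournier--Guillin quantitative rate for empirical measures in dimension $d=2$, as stated in \cite[Thm.~1]{FG}.

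\textbf{Step 1 (i.i.d.\ structure).} First I would check that for each $t\in[0,T]$ the random variables $(\tilde Z^{(\eps)}_{i,t})_{i=1,\dots,N}$ are i.i.d.\ with common law $\rho^{(\eps)}(t,\cdot)$. The key observation is that, contrary to the coupled system \eqref{eq:reg_part_x}--\eqref{eq:reg_part_y}, the drift in \eqref{eq:MKx}--\eqref{eq:MKy} is a deterministic, Lipschitz and time--continuous vector field $(t,z)\mapsto (K^{(\eps)}\star \rho^{(\eps)})(t,z)$ that depends neither on the other particles nor on the realization of the $\tilde Z^{(\eps)}_j$'s. Hence the SDEs decouple: each $\tilde Z^{(\eps)}_i$ depends only on $(Z^{(\eps)}_{i,0}, B_{i,x}, B_{i,y})$. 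Since the initial data $Z^{(\eps)}_{i,0}$ are i.i.d.\ with law $\rho_0$ and the Brownian motions are independent, strong uniqueness for the $i$--th SDE expresses $\tilde Z^{(\eps)}_i$ as a measurable function of $(Z^{(\eps)}_{i,0}, B_{i,x}, B_{i,y})$, so the $\tilde Z^{(\eps)}_i$ are themselves i.i.d. The one--time marginal is $\rho^{(\eps)}(t,\cdot)$ by the observation made just after \eqref{eq:MKy}: the common law $\mu^{(\eps)}$ solves the same linear Fokker--Planck equation as $\rho^{(\eps)}$ (with given drift $K^{(\eps)}\star\rho^{(\eps)}$) and starts from $\rho_0$, so uniqueness for the linear problem (already invoked in Section \ref{sec:existence}) gives $\mu^{(\eps)}=\rho^{(\eps)}$.

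\textbf{Step 2 (applying \cite{FG}).} With the i.i.d.\ property at hand, $\widetilde{\rho}^{(\eps),N}(t,\cdot)$ is exactly the empirical measure of $N$ i.i.d.\ samples with common law $\rho^{(\eps)}(t,\cdot)$ on $\R^2$. The Fournier--Guillin theorem in dimension $d=2$ yields, for any probability measure $\mu$ on $\R^2$ with finite moment of order $q>2$,
\[
\mathbb E\big[W_1(\hat\mu_N,\mu)\big] \leq c_q\big(1+M_q(\mu)^{1/q}\big)\,\frac{\log(1+N)}{\sqrt N},
\qquad M_q(\mu)=\int |z|^q\,\mu(\mathrm d z),
\]
with $c_q$ depending only on $q$ (the $\log(1+N)$ factor is the dimension--$2$ borderline correction). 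Applying this pointwise in $t$ to $\mu=\rho^{(\eps)}(t,\cdot)$ and invoking the hypothesis \eqref{morethantwomoment} to bound $M_q(\rho^{(\eps)}(t,\cdot))\leq C(T)$ uniformly in $(t,\eps)\in[0,T]\times(0,\infty)$, we obtain \eqref{tb} with $\tilde C(T)=c_q\big(1+C(T)^{1/q}\big)$, manifestly independent of $\eps$.

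\textbf{Main obstacle.} There is essentially no genuine obstacle here: the whole purpose of introducing the decoupled McKean--Vlasov process $\tilde Z^{(\eps)}_i$ was precisely to turn a mean--field question into a classical i.i.d.\ empirical--measure question, which \cite{FG} answers off the shelf. The only two points that need some care are (i) checking measurability/strong uniqueness for the decoupled SDEs uniformly in $\eps$ (straightforward since $K^{(\eps)}\star\rho^{(\eps)}$ is globally Lipschitz in $z$ with constant depending on $\eps$, which is enough because $\eps$ is kept fixed in this proposition), and (ii) making sure that the constant produced by \cite{FG} only depends on $q$ and on the moment $M_q$, so that the uniform bound \eqref{morethantwomoment} is exactly what is needed to obtain an $\eps$--independent $\tilde C(T)$.
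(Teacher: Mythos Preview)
Your proposal is correct and matches the paper's approach exactly: the paper simply states that the proposition is ``an immediate corollary of Theorem~1 in \cite{FG}'' without further elaboration, and your two steps (the i.i.d.\ structure of the decoupled McKean--Vlasov particles, then the Fournier--Guillin rate in dimension $2$ with $q>2$) are precisely what that sentence unpacks to. Your explicit identification of how the $\eps$--independence of $\tilde C(T)$ follows from the moment hypothesis \eqref{morethantwomoment} is a useful clarification that the paper leaves implicit.
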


The uniform bound \eqref{morethantwomoment} holds true if, for example, the initial measure $\rho_0$ has a finite third moment. To prove this, one uses an  argument similar to that in Proposition \ref{lemma:rho_eps_estimates}--$iv$ (in effect one uses the same proof as that used for the a priori bound deduced for the original measure $\rho$ in  Proposition \ref{apriori}--$iv$).
We state and prove now the main result of the section.
\begin{theorem} \label{ratep}
Let $0<T<\infty$ be a fixed time. Under the same conditions as in Theorem \ref{rate}, we have 
\begin{equation}\label{particlerate}
\sup_{t\in [0,T]}{\mathbb E}[W_1(\widehat {\rho}^{(\eps),N},\rho)]\le {\tilde C e^{{2C             T\over \eps^2}}\over \sqrt{N}}\log{(1+N)}+ C_\rho  \eps^{\frac12 \nu_{\rho}},
\end{equation}
where  $\tilde C= \tilde C(T)$ is the constant defined in Proposition \ref{rateindparticles},  $C$ is the Lipschitz constant of $\varepsilon^2 K^{(\eps)}$
and  $C_\rho=C(\rho_0,T)$, respectively, $\nu_{\rho}=\nu(\rho_0,T)$ are the constants arising from Theorem \ref{rate}. 

In particular,  for any $\delta\in [0,{1\over 2})$, there exists $\eps=\eps(\delta,N)$ and a constant $\tilde C_\rho=\tilde C_\rho(\delta)$ independent of $N$\ such that
 \begin{equation}\label{lograte}
 \sup_{t\in [0,T]}{\mathbb E}[W_1(\widehat {\rho}^{(\eps),N},\rho)]\le \tilde C_\rho \left(\log(N)\right)^{-{1\over 4}\nu_{\rho}}\ 
 \end{equation}
 for any $N\ge 1$.
\end{theorem}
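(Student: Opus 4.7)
The plan is to combine the $\rho^{(\eps)}\to\rho$ rate from Theorem~\ref{rate} with a classical coupling argument for the regularized McKean--Vlasov system. The starting point is the triangle inequality
$$
W_1(\widehat\rho^{(\eps),N},\rho) \leq W_1(\widehat\rho^{(\eps),N},\widetilde\rho^{(\eps),N}) + W_1(\widetilde\rho^{(\eps),N},\rho^{(\eps)}) + W_1(\rho^{(\eps)},\rho).
$$
The third term is controlled uniformly on $[0,T]$ by Theorem~\ref{rate} and is bounded by $C_\rho\,\eps^{\nu_\rho/2}$. The expectation of the second term is exactly the content of Proposition~\ref{rateindparticles}; the moment bound \eqref{morethantwomoment} follows from the $L^\infty$ and moment estimates of Proposition~\ref{lemma:rho_eps_estimates}, so ${\mathbb E}[W_1(\widetilde\rho^{(\eps),N},\rho^{(\eps)})]\le \tilde C\log(1+N)/\sqrt N$. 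All the work goes into the first term.

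Since $Z_i^{(\eps)}$ and $\tilde Z_i^{(\eps)}$ are driven by the same Brownian motion with the same initial condition, the natural coupling produces
$$
{\mathbb E}[W_1(\widehat\rho^{(\eps),N},\widetilde\rho^{(\eps),N})] \leq {\mathbb E}\left[\frac1N\sum_{i=1}^N |Z_i^{(\eps)}-\tilde Z_i^{(\eps)}|\right] = \phi(t),
$$
where $\phi(t)={\mathbb E}[|Z_1^{(\eps)}(t)-\tilde Z_1^{(\eps)}(t)|]$ by exchangeability. Subtracting \eqref{eq:MKx}--\eqref{eq:MKy} from \eqref{eq:reg_part_x}--\eqref{eq:reg_part_y} eliminates the stochastic part, and using $K^{(\eps)}(0)=0$ to recognize the discrete sum as a convolution against $\widehat\rho^{(\eps),N}$ leads to
$$
Z_i^{(\eps)}(t)-\tilde Z_i^{(\eps)}(t) = \int_0^t\Bigl[K^{(\eps)}\star\widehat\rho^{(\eps),N}(Z_i^{(\eps)}(s))-K^{(\eps)}\star\rho^{(\eps)}(\tilde Z_i^{(\eps)}(s))\Bigr]\ud s.
$$
Adding and subtracting $K^{(\eps)}\star\rho^{(\eps)}(Z_i^{(\eps)}(s))$, using that $K^{(\eps)}$ and hence $K^{(\eps)}\star\rho^{(\eps)}$ are Lipschitz with constant $C/\eps^2$, and applying the dual characterization of $W_1$, I arrive at
$$
\phi(t) \leq \frac{C}{\eps^2}\int_0^t {\mathbb E}[W_1(\widehat\rho^{(\eps),N},\rho^{(\eps)})]\ud s + \frac{C}{\eps^2}\int_0^t\phi(s)\ud s.
$$
Splitting $W_1(\widehat\rho^{(\eps),N},\rho^{(\eps)}) \le W_1(\widehat\rho^{(\eps),N},\widetilde\rho^{(\eps),N})+W_1(\widetilde\rho^{(\eps),N},\rho^{(\eps)})$ and applying Gr\"onwall's lemma then delivers $\phi(T) \le \bigl(\tilde C\log(1+N)/\sqrt N\bigr)\,e^{2CT/\eps^2}$, which is exactly \eqref{particlerate}.

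For \eqref{lograte}, the final step balances the two competing contributions in \eqref{particlerate}: choosing $\eps^2=A/\log N$ with $A>4CT$ makes $e^{2CT/\eps^2}=N^{2CT/A}$ with exponent strictly below $1/2$, so the particle term decays polynomially in $N$ and is dominated by the regularization term $C_\rho\,\eps^{\nu_\rho/2}$, which is of order $(\log N)^{-\nu_\rho/4}$. The parameter $\delta\in[0,1/2)$ is absorbed into the choice of $A$ and into the constant $\tilde C_\rho(\delta)$. The main obstacle throughout is the exponential blow-up $e^{2CT/\eps^2}$ dictated by the $1/\eps^2$ Lipschitz constant of the mollified kernel: it forces $\eps$ to decay no faster than $(\log N)^{-1/2}$, which is precisely why only a logarithmic rate survives in \eqref{lograte}.
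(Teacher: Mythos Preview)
Your argument is correct and follows essentially the same Sznitman-type coupling strategy as the paper. The only organizational difference is that the paper applies Gr\"onwall to the weighted quantity $e^{-Ct/\eps^2}\mathbb{E}W_1(\widehat\rho^{(\eps),N},\rho^{(\eps)})$ directly (keeping the estimate pathwise slightly longer), while you take expectations earlier and close on $\phi(t)$; both routes yield the same bound and the choice of $\eps$ for \eqref{lograte} matches the paper's.
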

\begin{proof} Following  Theorem \ref{rate}, to establish \eqref{particlerate} it suffices to prove that 
\begin{equation}\label{nnn5}
\sup_{t\in [0,T]}{\mathbb E}[W_1(\widehat {\rho}^{(\eps),N},\rho^{(\eps)})]\le {\tilde C e^{{2CT\over \eps^2}}\over \sqrt{N}}\log{(1+N)}.
\end{equation}
Since both $Z^{(\eps)}_i=(X^{(\eps)}_i,Y^{(\eps)}_i)$ and $\tilde Z^{(\eps)}_i=(\tilde X^{(\eps)}_i,\tilde Y^{(\eps)}_i)$ are driven by the same Brownian motions and have the same initial condition, we have 
\begin{eqnarray}
{\ud \over \ud t}(Z^{(\eps)}_{i,t}-\tilde{Z}^{(\eps)}_{i,t})&=& \frac1N \sum_j K^{(\eps)} (Z^{(\eps)}_{i,t}-Z_{j,t}^{(\eps)}) - (K^{(\eps)}\star \rho^{(\eps)})(\tilde{Z}^{(\eps)}_{i,t}) \nonumber \\
&=& (K^{(\eps)} \star \widehat {\rho}^{(\eps),N})(Z^{(\eps)}_{i,t}) - (K^{(\eps)}\star \rho^\eps)(\tilde{Z}^{(\eps)}_{i,t}) \nonumber \\
&=& [K^{(\eps)} \star (\widehat {\rho}^{(\eps),N}-\rho^{(\eps)})(Z^{(\eps)}_{i,t})] +[ (K^{(\eps)}\star \rho^{(\eps)})(Z^{(\eps)}_{i,t})-(K^{(\eps)}\star \rho^{(\eps)})(\tilde{Z}^{(\eps)}_{i,t})].\nonumber
\end{eqnarray}
We note that $ K^{(\eps)}(\cdot-Z^{(\eps)}_{i,t})$ is a function with Lipschitz constant less than ${C\over \eps^2}$.
%\textcolor{blue}{\it With our "deformed" current definition of the Lipschitz %constant and $W_1$, we should be slightly more careful.}. 
Hence
\[
\big|K^{(\eps)} \star (\widehat {\rho}^{(\eps),N}-\rho^{(\eps)})(Z^{(\eps)}_{i,t})\big| \leq \frac{C}{\eps^2}W_1(\widehat {\rho}^{(\eps),N},\rho^{(\eps)}).
\]
Furthermore, using that $K^{(\eps)}(z-\cdot)$ is $C/\eps^2$-Lipschitz, and $\int \rho^\eps\ud z=1$, we get
\begin{eqnarray}
\big| K^{(\eps)}\star \rho^{(\eps)}(Z^{(\eps)}_{i,t})-K^{(\eps)}\star \rho^{(\eps)}(\tilde{Z}^{(\eps)}_{i,t})\big|  \leq \frac{C}{\eps^2}|Z^{(\eps)}_{i,t}-\tilde{Z}^{(\eps)}_{i,t}|. 
\end{eqnarray}
Then
\[
{\ud \over \ud t}|Z^{(\eps)}_{i,t}-\tilde{Z}^{(\eps)}_{i,t}| \leq \frac{C}{\eps^2}|Z^{(\eps)}_{i,t}-\tilde{Z}^{(\eps)}_{i,t}| + \frac{C}{\eps^2}W_1(\widehat {\rho}^{(\eps),N},\rho^{(\eps)}).
\]
Hence, since $Z^{(\eps)}$ and $\tilde Z^{(\eps)}$ share the same initial data, we arrive at
\begin{equation}
e^{-{Ct\over \eps^2}}|Z^{(\eps)}_{i,t}-\tilde{Z}^{(\eps)}_{i,t}|\le \int_0^t \frac{Ce^{-{Cs\over \eps^2}}}{\eps^2}W_1(\widehat {\rho}^{(\eps),N},\rho^{(\eps)})(s)\ud s.
\label{eq:bound}
\end{equation}
Now we write
\begin{eqnarray}
e^{-{Ct\over \eps^2}} W_1(\widehat \rho^{(\eps),N},\rho^{(\eps)}) &\leq & e^{-{Ct\over \eps^2}}W_1(\widehat {\rho}^{(\eps),N},\widetilde{\rho}^{(\eps),N}) + e^{-{Ct\over \eps^2}}W_1(\widetilde {\rho}^{(\eps),N},\rho^{(\eps)}) \nonumber \\
&\leq& \frac1N \ds\sum_{i=1}^N e^{-{Ct\over \eps^2}}|Z^{(\eps)}_{i,t}-\tilde{Z}^{(\eps)}_{i,t}| +  e^{-{Ct\over \eps^2}}W_1(\widetilde {\rho}^{(\eps),N},\rho^{(\eps)}) \nonumber \\
&\leq & \int_0^t \frac{Ce^{-{Cs\over \eps^2}}}{\eps^2}W_1(\widehat {\rho}^{(\eps),N},\rho^{(\eps)})(s)\ud s +  e^{-{Ct\over \eps^2}}W_1(\widetilde {\rho}^{(\eps),N},\rho^{(\eps)}), \nonumber
\end{eqnarray}
where we have used first the triangle inequality, then a direct inequality for the $W_1$ distance between the two empirical measures, and finally  \eqref{eq:bound}.
By taking the expectation and using \eqref{tb}, we obtain   
\begin{eqnarray}
%e^{-{t\over \eps^2}}{\mathbb E}[W_1(\widehat \rho_t^{(\eps),N},\rho_t^{(\eps)})]&\le &
%e^{-{t\over \eps^2}}{\mathbb E} [W_1(\widehat {\rho}^{(\eps),N},\widetilde{\rho}^{(\eps),N}) ]+e^{-{t\over \eps^2}}{\mathbb E} [W_1(\widetilde {\rho}^{(\eps),N},\rho_t^{(\eps)})) ]\nonumber\\
%&\le&
%\sum_{i=1}^{N} e^{-{t\over \eps^2}}{\mathbb E}[|Z_{i,t}^{(\eps)}-\tilde{Z}^{(\eps)}_{i,t}|]+ e^{-{t\over \eps^2}}{\tilde C\over \sqrt{N}}\log{(1+N)} \nonumber\\
e^{-{Ct\over \eps^2}}{\mathbb E}W_1(\widehat \rho^{(\eps),N},\rho^{(\eps)})(t) &\leq & (1-e^{-{Ct\over \eps^2}}) {\tilde C\over \sqrt{N}}\log{(1+N)}+\frac{C}{\eps^2}
\int_0^t e^{-{Cs\over \eps^2}}{\mathbb E}W_1(\widehat \rho^{(\eps),N},\rho^{(\eps)})(s)\ud s.\nonumber\\
&\leq &  {\tilde C\over \sqrt{N}}\log{(1+N)}+\frac{C}{\eps^2}
\int_0^t e^{-{Cs\over \eps^2}}{\mathbb E}W_1(\widehat \rho^{(\eps),N},\rho^{(\eps)})(s)\ud s.\nonumber
\end{eqnarray}
By the standard Gr\"onwall's lemma we deduce that 
\[
e^{-{Ct\over \eps^2}}{\mathbb E}W_1(\widehat \rho^{(\eps),N},\rho^{(\eps)})(t) \leq {\tilde Ce^{{Ct\over \eps^2}}\over \sqrt{N}}\log{(1+N)}
\]
%Finally, we apply the following Gr\"onwall's lemma: 
%\begin{lemma}
%Let $u, a$ and $b$ be positive functions such that for all $0\leq t\leq %T$
%\[
%u(t)\leq b(t) +\int_0^t a(s)u(s)ds;
%\]
%then for all $0\leq t\leq T$
%\[
%u(t) \leq b(t) +\int_0^t b(s)a(s)e^{\int_s^ta(\tau)d\tau}ds.
%\]
%\end{lemma}
%Taking $a(t)=C/\eps^2$ and $b(t)=(1-e^{-{Ct\over \eps^2}})\tilde C\log{(1+N)}/\sqrt{N}$, %we deduce that
%\[
%e^{-{t\over \eps^2}}{\mathbb E}W_1(\widehat \rho^{(\eps),N},\rho^{(\eps)})(t)\le %\frac12 \tilde C\left(e^{+{t\over \eps^2}}+e^{-{t\over \eps^2}}\right) \frac{\log{(1+N)}}{\sqrt{N}},
%\]   
which gives \eqref{nnn5}. Using the triangle inequality and Theorem~\ref{rate},  \eqref{nnn5}  leads to \eqref{particlerate}.
Moreover observe that for $\delta\in [0,{1\over 2})$ and $\eps=({1-2\delta\over 4CT}\log(N))^{-{1\over 2}}$ we have 
\[
{\tilde C e^{{2CT\over \eps^2}}\over \sqrt{N}}\log{(1+N)}+ C_\rho  \eps^{\frac12 \nu_{\rho}}={\tilde C \over N^\delta}\log{(1+N)}+ C_\rho \left({1-2\delta\over 4CT}\log(N)\right)^{-{1\over 4}\nu_{\rho}}
\] 
which gives \eqref{lograte}.
\end{proof}

\section{Numerical illustrations}
\label{Sec:Num}

The goal of this section is two--fold:
\begin{enumerate}
\item Illustrate the existence Theorems \ref{main} and \ref{main2}, and show that the minimal value for the diffusion we have identified in the statement is not optimal: the solution can apparently be global in time for 
$D<2C_2M_0$;
\item Illustrate the convergence for the particles approximation, and show that the actual rate of convergence as a function of $N$ seems to be much better than suggested by Theorem \ref{ratep}.
\end{enumerate}
For this purpose, we use a finite volume method introduced in \cite{CCH} to study drift-diffusion equations with gradient structure. Of course, there is no gradient structure in the present case, but the method can be adapted and it is proved to be robust. 
Let us briefly explain the principles of the approach.
We work on a Cartesian grid, with space steps $\Delta x,\Delta y>0$.
Given the time step $\Delta t>0$, we wish to update the numerical unknown with a finite volume formula which looks like
\[
\rho^{n+1}_{i,j}=\rho^{n+1}_{i,j}-\ds\frac{\Delta t}{\Delta x}(F_{i+1/2,j}-F_{i-1/2,j})-\ds\frac{\Delta t}{\Delta y}(G_{i,j+1/2}-G_{i,j-1/2})
\]
where we need to find a relevant definition for the numerical fluxes $F,G$.
To this end, we rewrite the right hand side of \eqref{pde} as 
\[\nabla\cdot\Big( \rho(\nabla\ln(\rho) -\vec F[\rho])\Big)
=\partial_x \Big(\rho(\partial_x \ln(\rho)+\partial_x U)\Big)+ \partial_y \Big(\rho(\partial_y \ln(\rho)+\partial_y V)\Big)
\]
where $U,V$ are the scalar functions defined by  
\[
U(x,y,t)=\ds\int |x-x'|\rho(x',y,t)\ud x',\qquad
V(x,y,t)=\ds\int |y-y'|\rho(x,y',t)\ud x'
.\]
We shall therefore apply the ideas in \cite{CCH} directionwise.
The flux $F_{i+1/2,j}$ is given by applying the upwinding principle with the ``velocity'' $\xi=\partial_x \ln(\rho)+\partial_x U$
which leads to 
\[
F_{i+1/2,j}=\big[\xi_{i+1/2,j}\big]_+\rho_{i,j} + \big[\xi_{i+1/2,j}\big]_+\rho_{i+1,j}.\]
The interface value is  obtained
by the mere centered difference
\[
\xi_{i+1/2,j}=\ds\frac{1}{\Delta x}\Big(\ln(\rho_{i+1,j})- \ln(\rho_{i,j}) + U_{i+1,j}-U_{i,j} \Big),
\]
where the integral that defines $U$ 
can be evaluated 
by a quadrature rule (the rectangle rule, say).
A similar construction applies to construct the flux $G$.
The accuracy of the method can be  improved by using 
a polynomial reconstruction of the density, with a suitable slope limiter, instead of the mere upwind scheme, in the spirit of the design of MUSCL schemes. We refer the reader to \cite{CCH} for further details and the analysis of this scheme for gradient--flow equations.
We can equally use a second-order Runge-Kutta method for the time integration. We do not explicitly introduce a regularization for the singular forces \eqref{def_f} in the code; we simply compute \eqref{def_f} by summing over rows or columns of the square grid. This corresponds to an effective regularization of the order of the grid spacing (typically $\Delta x=\Delta y=0.05$ in the simulations presented below). For the particles simulations, we integrate directly the regularized equations \eqref{eq:reg_part_x}--\eqref{eq:reg_part_y} by using the Euler method. We typically use $\epsilon=0.1$.

\begin{figure}
\centerline{\includegraphics[width=18cm]{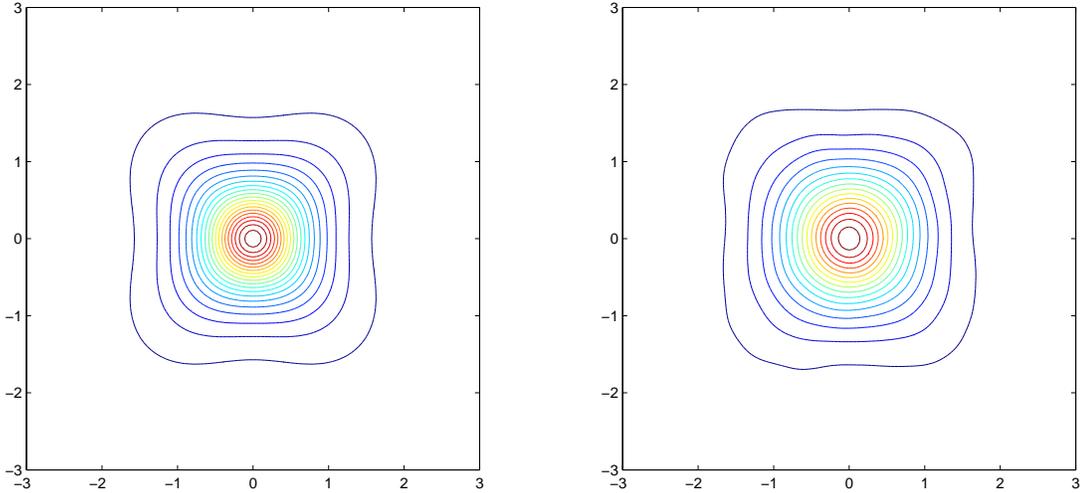}}
\caption{\label{fig:contour} Contour plot of the density $\rho$ for $D=0.15$, at time $t=5$. The left plot is done using the finite volume method introduced in \cite{CCH}. The right plot is done using the (mollified) particles approximation with 10 samples of $10^4$ particles. Note that the noise due to the finite number of particles is still visible.}
\end{figure}

\begin{figure}[htbp]
\centerline{\includegraphics[width=8cm]{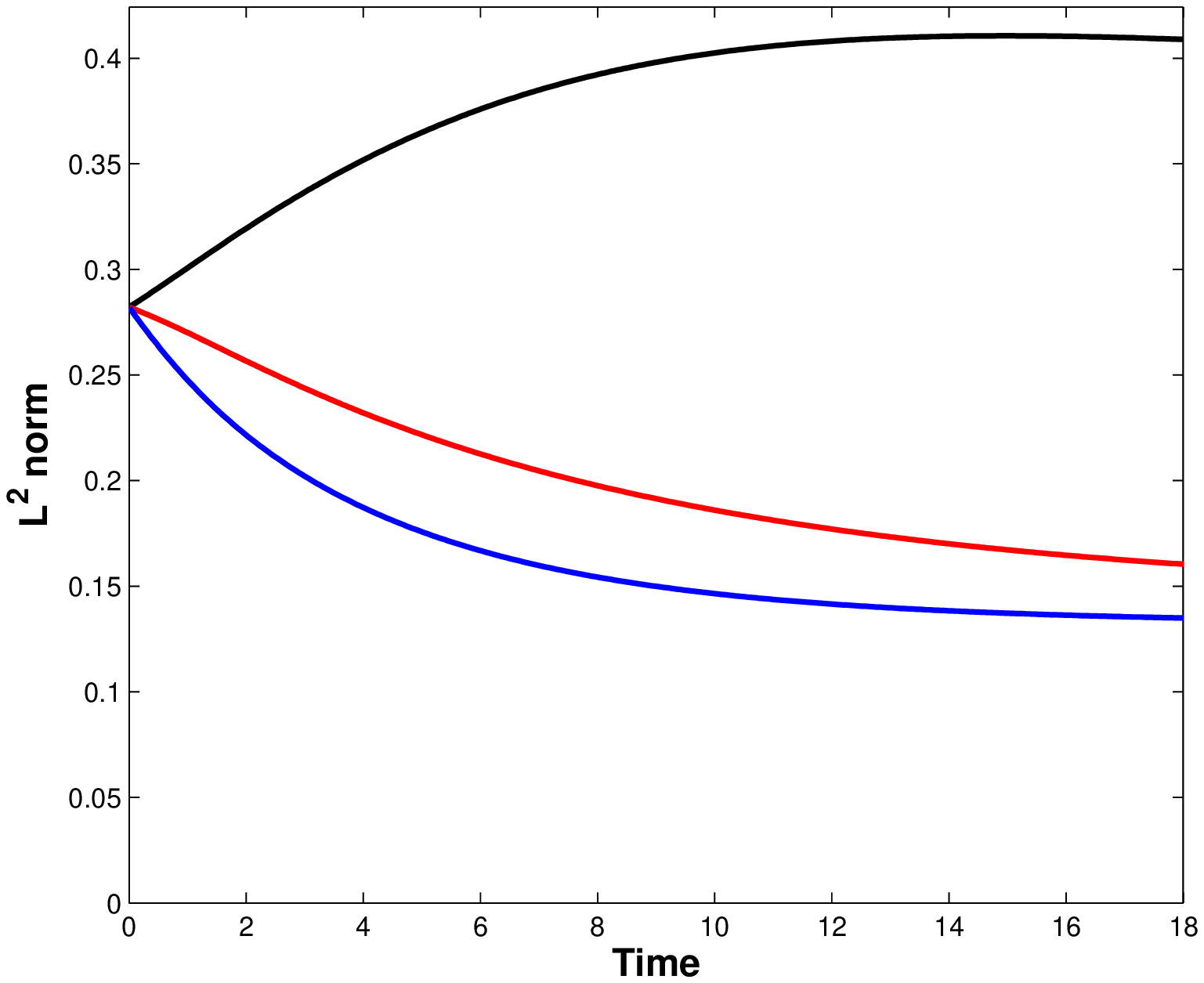}~\includegraphics[width=8cm]{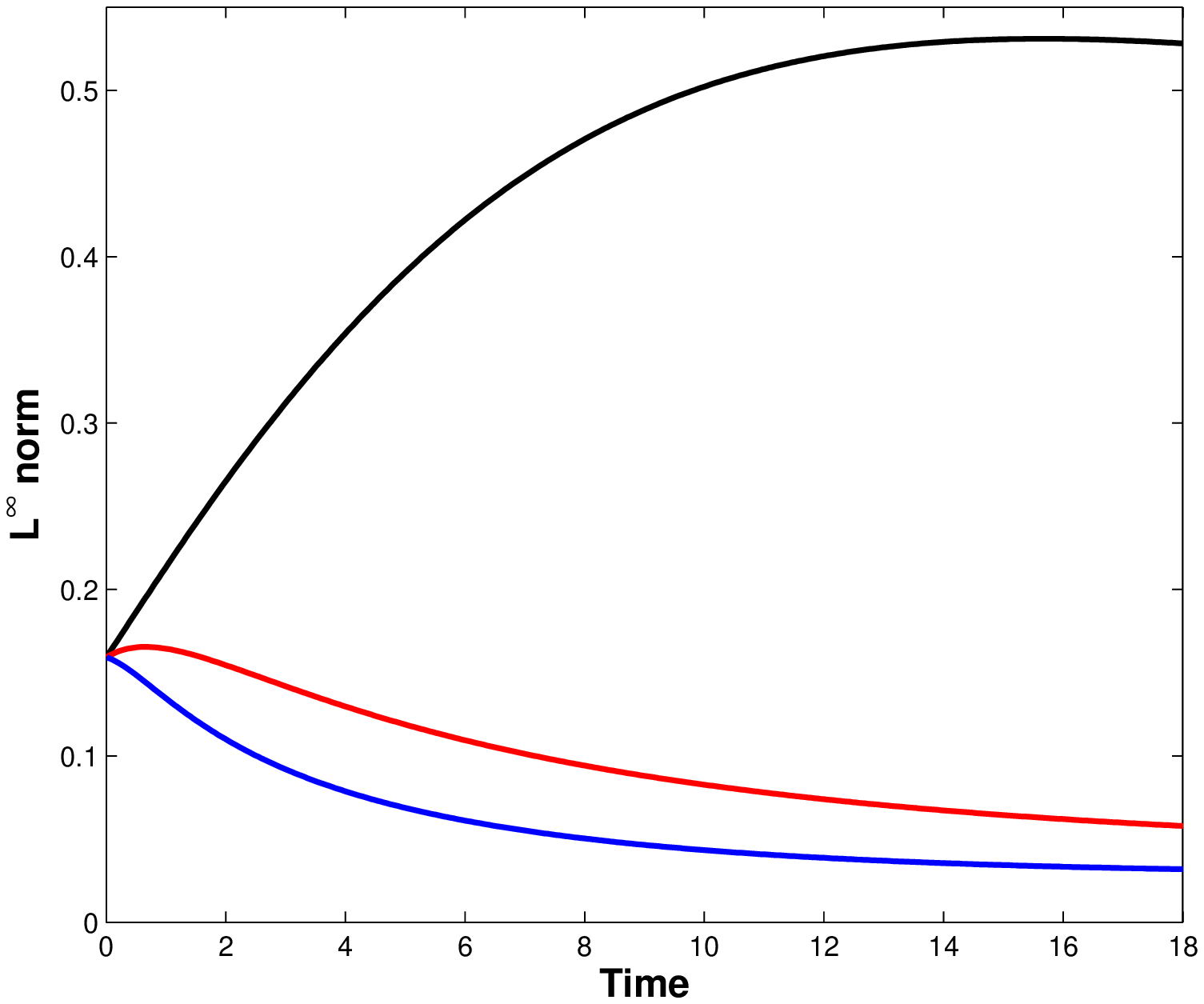}}
\caption{\label{fig:l2} $L^2$ norm (left) and 
$L^\infty$ norm (right)
as a function of time for $D=0.15$ (black), $D=0.25$ (red) and $D=0.35$ (blue).}
\end{figure}
%\begin{figure}
%\centerline{\includegraphics[width=10cm]{MaxDensityD=015-025-035.eps}}
%\caption{\label{fig:linfty} $L^\infty$ norm as a function of time for $D=0.15$ (black), $D=0.25$ (red) and $D=0.35$ (blue).}
%\end{figure}
Fig.~\ref{fig:contour} shows a contour plot of $\rho$ at late times for $D=0.15$ obtained by using the finite volume method introduced in \cite{CCH} (left plot) and the (mollified) particles approximation (right plot). Fig. \ref{fig:l2} %and \ref{fig:linfty} 
shows the evolution of the $L^2$ and $L^\infty$ norms for various values of $D$. $D=0.15$ is smaller than $2C_2$, the threshold of Theorem \ref{main} (here $M_0=1$): the $L^2$ norm is not monotonically decreasing, but 
there  is apparently no finite time singularity.
\begin{figure}[htbp]
\centerline{\includegraphics[width=10cm]{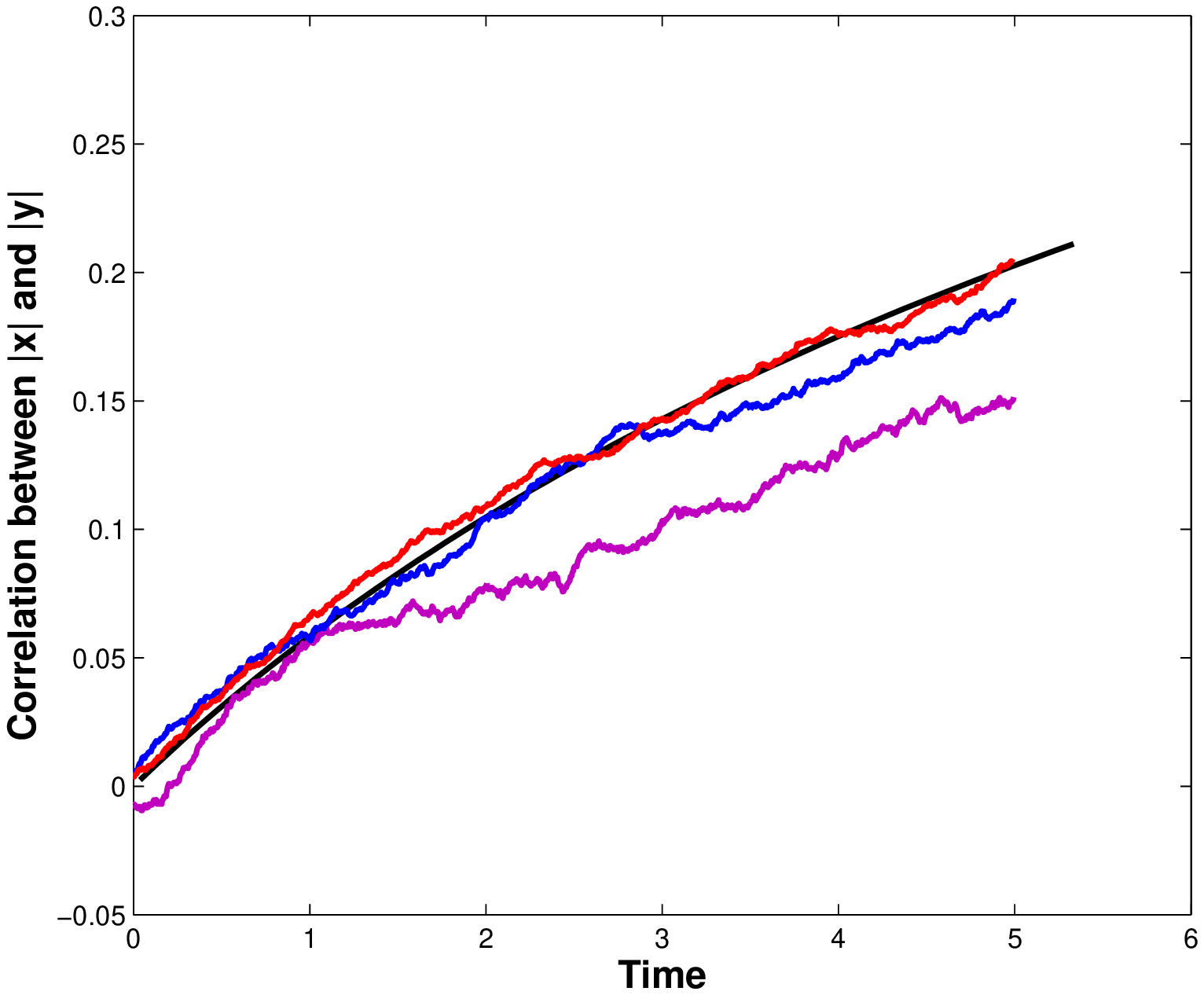}}
\caption{\label{fig:convergenceN} Plots of the quantity $<|xy|>-<|x|><|y|>$, where $<\cdot>$ stands for the integral with weight $\rho$. Comparison between the PDE solution (black line) and particles simulations with $N=2000$ (purple), $N=4000$ (blue) and $N=8000$ (red). There is always a single run for the particles simulations. The parameters are $D=0.15$, and for the particles simulation $\eps=0.1$.}
\end{figure}
Fig.~\ref{fig:convergenceN} shows that particles simulations are reasonably close to the PDE simulations already for a number of particles much smaller than that suggested by Theorem \ref{ratep}.
%Fig\ref{fig:order} checks that the method seems to be second order in the grid size, as expected for regular forces. \\

%pde code used; natural regularization provided by grid size. Fig 1: convergence as h->0, $O(h^2)$ seems OK.
%Fig 2: checks on $L^2$, $L^\infty$ norms varying $D$. Different behaviors, our theorem is not optimal.
%Comparison with particles; Fig 3: rate much better than theorem suggests. 

\section*{Acknowledgements}

We are gratefully indebted to Nicolas Fournier and Jos\'e Antonio Carrillo for many motivating discussions and helpful advices.
J. Barr\'e acknowledges the support of CNRS and Imperial College London which has made a visit of several months at the Math Department of ICL possible.

\bibliographystyle{plain}
\bibliography{BCGSeptember2016}

\end{document}